\renewcommand{\thepart}{Part \Roman{part}.}
\titleformat{\part}{\huge\bfseries}{}{0pt}{\begin{center}\thepart\end{center}}
\newcounter{rowno}
\newcommand{\secn}{\S}
\newcommand{\chp}{\S}
\def\nl#1#2{\begingroup
    #2%
    \def\@currentlabel{#2}%
    \phantomsection\label{#1}\endgroup
}
\newtheorem{theorem}            {Theorem}[section]
\newtheorem{corollary}          [theorem]{Corollary}
\newtheorem{definition}         [theorem]{Definition}
\newtheorem{lemma}              [theorem]{Lemma}
\newcommand{\normal}{\mathbfit{N}}  
\noindent\textbf{#1:}\begin{rmfamily}\noindent}%
\newcommand{\pwe}{Complements and Sources}
\newcommand{\I}{\Pi}
\newcommand{\avg}{\phi}
\newcommand{\dob}{\rho}  
\newcommand{\bias}{\operatorname{Bias}}
\newcommand{\var}{\operatorname{Var}}
\newcommand{\msd}{\operatorname{MSD}}
\newcommand{\pdf}{p}
\newcommand{\cdf}{F}
\newcommand{\prob}{\mathbb{P}}
\newcommand{\E}                 {\Bbb{E}}
\renewcommand{\P}                 {\Bbb{P}}
\newcommand{\cov}{\operatorname{cov}}
\newcommand{\obs}{y}
\newcommand{\onoise}{v}
\newcommand{\onoisecovnew}{R}
\newcommand{\state}{x}
\newcommand{\statespace}{\mathcal{X}}
\newcommand{\statedim}{X}
\newcommand{\dvar}[2]{\|{#1}-{#2}\|_{\text{\tiny{TV}}}}
\newcommand{\oprob}{B}
\newcommand{\tp}{P}
\newcommand{\finaltime}{N}
\newcommand{\model}{\theta}
\newcommand{\truemodel}{\model^o}
\newcommand{\Model}{\Theta}
\newcommand{\beliefm}{\belief_\model}
\newcommand{\tpm}{{\tp_\model}}
\newcommand{\nablam}{\nabla_\model}
\newcommand{\belief}{\pi}
\newcommand{\bbelief}{\bar{\pi}}
\newcommand{\kalmancov}{\Sigma}
\newcommand{\ole}{\stackrel{\text{defn}}{=}}
\newcommand{\argmin}{\operatornamewithlimits{argmin}}
\newcommand{\argmax}{\operatornamewithlimits{argmax}}
\newcommand{\reals}{{\rm I\hspace{-.07cm}R}}
\newcommand{\beq}{\begin{equation}}
\newcommand{\eeq}{\end{equation}}
\newcommand{\nn}{\nonumber}
\renewcommand{\(}		{\left(}
\renewcommand{\)}		{\right)}
\renewcommand{\th}{\theta}
\newcommand{\Th}{\Theta}
\newcommand{\p}{\prime}
\newcommand{\one}{\mathbf{1}}
\newcommand{\zero}{\mathbf{0}}
\newcommand{\f}{f} 
\newcommand{\diag}{\textnormal{diag}}
\newcommand{\hQ}{\hat{Q}}
\newcommand{\cost}{c}
\newcommand{\Cost}{C}
\newcommand{\action}{u}
\newcommand{\baction}{\bar{\action}}
\newcommand{\actionspace}{\,\mathcal{U}}
\newcommand{\actiondim}{U}
\newcommand{\discount}{\rho}
\def \con {{\beta}}
\def \rcon {{\gamma}}
\def\param{{\alpha}}
\def \a {{\psi}}
\def\Param{{\boldsymbol{\alpha}}}
 \def \G{{B}}
 \newcommand{\Ep}{\E_{\policy}}
\newcommand{\policy}{\mu}
\newcommand{\optpolicy}{\policy^*}
\newcommand{\valuef}{V}
\newcommand{\info}{\mathcal{I}}
\newcommand{\thstate}{\state}
\newcommand{\thstatedim}{\statedim}
\def \La {{\cal L}}
\def \GF{{\widehat{\nabla C}^{\text{WD}}}}
\def \GS{{\widehat{\nabla C}^{\text{Score}}}}
\newcommand{\wderiv}{g}
\newcommand{\stepsize}{\epsilon}
\newcommand{\bm}{w}
\newcommand{\globalopt}{\mathcal{G}}
\newcommand{\cd}{(\cdot)}
\newcommand{\br}{{b}^{\gamma}}
\newcommand{\bbr}{b^{\gamma}}
\newcommand{\boldf}{f}
\def\lb{\left[}
\def\rb{\right]}
\def\lbr{\left\lbrace}
\def\rbr{\right\rbrace}
\newcommand{\maxdiff}{D}
\newcommand{\score}{S}
\newcommand{\batchsize}{N}
\newcommand{\degreelink}{\th}
\newcommand{\penaltyweight}{\Delta}
\def\ph{\varphi}
\newcommand{\M}{{\cal M}}
\newcommand{\lbar}{\overline}
\newcommand{\wdt}{\widetilde}
\newcommand{\ad}{&\!\!\!\disp}
\newcommand{\aad}{&\disp}
\newcommand{\barray}{\begin{array}{ll}}
\newcommand{\earray}{\end{array}}
\newcommand{\disp}{\displaystyle}
\def\cd{(\cdot)}
\newcommand{\e}{\varepsilon}
\newcommand{\tth}{\tilde{\th}}
\newcommand{\numagents}{M}
\newcommand{\nature}{\state}
\newcommand{\pop}{\theta}
\newcommand{\bpop}{\bar{\pop}}
\newcommand{\popspace}{\Theta}
\newcommand{\degdist}{\rho}
\newcommand{\wdh}{\widehat}
\newcommand{\mtg}{\nu}
\newcommand{\simplex}{\Pi}
\newcommand{\batch}{\iota}
\newcommand{\bsize}{N}
\newcommand{\bindex}{n}
\begin{document}

\title[Stochastic Approximation Algorithms for Markov Decision Processes]{Reinforcement Learning}
\author{Vikram Krishnamurthy\\ University of British Columbia, \\ Vancouver, Canada. V6T   1Z4.\\  {\tt vikramk@ece.ubc.ca} \\ December 2015}

\maketitle

\dominitoc


\frontmatter

\mainmatter
\chapter{Introduction}
Stochastic adaptive control algorithms are of  two types: {\em direct
  methods}, where the unknown transition probabilities $\tp_{ij}(\action)$ 
are estimated
 simultaneously while updating the control policy, and {\em
  implicit methods}  (such as simulation based methods), where the
transition probabilities are not directly estimated in order to
compute the control policy. Reinforcement learning algorithms are implicit methods for adaptive control.
 The aim of this article is to present elementary results
in stochastic approximation algorithms for reinforcement learning of Markov decision processes. 
This article is a short summary of a much more detailed presentation in the forthcoming book \cite{Kri16}.

Recall that:
\begin{compactitem}
\item A Markov Decision Process (MDP)  is obtained by controlling the transition probabilities of a Markov chain as
it evolves over time.
\item
A Hidden Markov Model (HMM) is a noisily observed Markov chain.
\item A partially observed Markov decision process (POMDP) is  obtained by controlling
the transition probabilities and/or observation probabilities of an HMM. 
\end{compactitem}
 These relationships are illustrated in Figure \ref{fig:terminology}.

A POMDP specializes to a MDP if
the observations are  noiseless and equal to the state of the Markov chain.
A POMDP specializes to an HMM  if the control is removed.
Finally,  an HMM specializes to a Markov chain if the observations are  noiseless and equal to the state of the Markov chain.

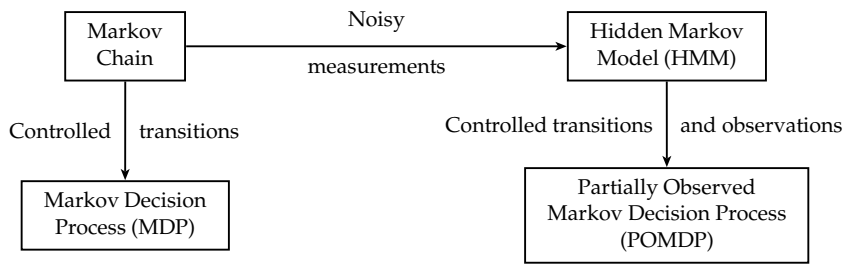
\begin{figure}[h] \centering
\scalebox{0.9}{
\begin{pspicture}[showgrid=false](0.5,-2.2)(14,1.2)


\dotnode[dotstyle=square*,dotscale=0.001](15.5,0.75){dot}

\psblock(2,.75){H1}{\begin{tabular}{c} Markov  \\ Chain \end{tabular}}

\psblock(10,.75){B2}{\begin{tabular}{c}  Hidden Markov  \\ Model (HMM) \end{tabular}}

\psblock(2,-1.75){B1}{\begin{tabular}{c} Markov Decision \\  Process  (MDP)\end{tabular}}

\psblock(10,-1.75){B3}{\begin{tabular}{c} Partially Observed \\  Markov Decision Process \\ (POMDP) \end{tabular}}




\psset{style=Arrow}

\ncline{H1}{B2} \naput[npos=.5]{Noisy}
\nbput[npos=.5]{measurements}

\ncline{H1}{B1} \nbput[npos=.5]{Controlled}
\naput[npos=.5]{transitions}

\ncline{B2}{B3} \nbput[npos=.5]{Controlled transitions}
\naput[npos=.5]{and observations}

\end{pspicture}}
\caption{Terminology of HMMs, MDPs and POMDPs} \label{fig:terminology}
\end{figure}

This article deals  with {\em stochastic gradient algorithms} for
estimating reasonable (locally optimal) strategies for MDPs and POMDPs. 

Suppose a decision maker can observe the noisy response $\obs_k$ of a controlled stochastic system to any action $\action_k$ that it chooses.
Let $\info_k = \{\action_0,\obs_1,\ldots,\action_{k-1},\obs_k\}$ denote the history of actions
and observed responses up to  time $k$.
The decision
maker chooses its action as $\action_ k = \policy_\model(\info_k)$ where 
 $\policy_\model$ denote a parametrized policy (parametrized by a vector $\model$). 
Then to optimize its choice of actions,
the decision maker  needs to compute  the optimal parameter $\model^*$ which minimizes the cost criterion $\E\{\Cost(\model, \info_k)\}$.
The decision maker uses the
 following stochastic gradient algorithm to estimate  $\model^*$:
\begin{empheq}[box=\fbox]{equation} \model_{k+1} = \model_k - \stepsize\,  \nablam \Cost(\model_k,\info_k), \quad k=0,1,\ldots .\label{eq:introsa} 
\end{empheq}
Here $\nablam \Cost(\model_k,\info_k)$ denotes the gradient (or estimate of gradient) of the instantaneous  cost  with respect to the parameter $\th$ and $\epsilon$ denotes
a small positive step size. Such algorithms lie within the class of reinforcement learning methods since the past experience $\info_k$ is used
to adapt the parameter $\model_k$ which in turn determines the actions; intuitively a good choice of $\model$ would result in good performance which
in turn reinforces this choice.
This article  deals with such stochastic gradient algorithms including  how to compute the gradient estimate and  analyze the resulting algorithm.


Chapter \ref{chp:simulation} discusses gradient estimation for Markov processes
via stochastic simulation. This forms the basis of gradient based
reinforcement learning.

Chapter \ref{chp:rl} presents simulation based stochastic approximation  algorithms for estimating the optimal policy of MDPs when the transition probabilities are not known. These algorithms are also described in the context of  POMDPs. The Q-learning algorithm and 
gradient based reinforcement learning algorithms are presented.

Chapter \ref{chp:markovtracksa} gives a brief description of convergence analysis of stochastic approximation algorithms. Examples
given include recursive maximum likelihood estimation of HMM parameters,  the least mean squares algorithm for estimating the state of an HMM
(which can be used for adaptive control of a POMDP), discrete stochastic optimization algorithms, and mean field dynamics for approximating the dynamics of information
flow in large scale social networks.

 For further details, illustrative examples,  proofs of theorem and more insight, please see \cite{Kri16}. More specifically, \cite{Kri16} deals extensively with POMDPs and
 structural results. The algorithms in this article can be used in conjunction with the structural results resulting in efficient numerical implementations. Structural results
 for POMDPs are developed in \cite{KD07,KW09,KD09,Kri11,Kri12,Kri13}.

\chapter{Stochastic Optimization and Gradient Estimation} \label{chp:simulation}

\minitoc 

Consider the discounted cost MDP problem:
$$\text{ Compute } \optpolicy = \argmin_\policy \E_{\policy} \bigl\{ \sum_{k=0}^\infty \discount^k \cost(\state_k, \policy(\state_k))\bigr\}.
$$
where $x_k$ is the controlled state of the system, $\cost(x_k,u_k)$ is the cost incurred at time $k$ by choosing action
$u_k = \policy(\state_k)$ and $\rho < 1$ is a discount factor.
 Stochastic dynamic programming is used to compute the {\em globally} optimal policy $\optpolicy$ for such problems.

In comparison,  this article   deals with computing (estimating)   {\em local} minima using stochastic gradient algorithms.
Suppose the action at time $k$  is  chosen according to the following  parameterized  policy:
$\action_k = \policy_\model(\state)$ for some pre-specified function $\policy_\model$ 
parametrized by the vector  $\model \in \reals^p$.
Then the  aim is:
\beq  \text{ Compute }  \th^* = \argmin_{\model } C(\model), \quad C(\model) = \E_{\policy_\model} \bigl\{ \sum_{k=0}^\infty \discount^k
 \cost(\state_k, \policy_\model(\state_k)\bigr\}. \label{eq:localmin} \eeq
 This will be achieved  using a stochastic gradient algorithm of the form
 \beq  \model_{k+1} = \model_k - \stepsize_k \widehat\nabla_\model \Cost_k(\model_k) . \label{eq:sgrad0} \eeq
Here $\stepsize_k$ is a scalar step size and  $\widehat \nabla_\model \Cost_k(\model_k)$ denotes an   estimate
of gradient ${\nabla}_\model \Cost(\model)$ evaluated at $\model_k$.  These gradient estimates 
need to be computed using the observed realization  of $\{\state_k, \{c(\state_k,\action_k)\}$
 since this is the only information available to the decision  maker.
Since in general, $C(\model)$ is non-convex, at best one can expect (\ref{eq:sgrad0}) to converge (in a sense
to be made precise below)  to a local stationary point of (\ref{eq:localmin}).
 
Even though stochastic gradient algorithms  typically converge to a local stationary point, there are several advantages 
compared to stochastic dynamic programming.
First, in many cases, (\ref{eq:sgrad0}) can operate without knowing the transition matrices of the MDP, whereas dynamic programming
requires complete knowledge of these parameters. (This is the basis of reinforcement learning in Chapter \ref{chp:rl}.)
Second, (\ref{eq:sgrad0}) is often substantially cheaper to compute, especially for very large state and action spaces, where dynamic programming can be prohibitively expensive.

This chapter focuses  on gradient estimation for Markov processes. Algorithms discussed include
the SPSA algorithm, score function gradient estimator and weak derivative gradient estimator. Such gradient estimators
form the basis for implementing stochastic gradient algorithms for MDPs and POMDPs.
Simulation based gradient estimation is a mature area; our aim in this chapter is to present a few key algorithms that are useful
for MDPs and POMDPs.

\section{Stochastic Gradient Algorithm} \label{sec:contso}
The aim is to solve a continuous-valued stochastic optimization problem.
Suppose a parameter vector $\model \in \reals^p$ 
specifies a transition matrix (or more generally, kernel)
$\tpm$ 
and stationary distribution $\belief_{\theta}$ from which 
a Markov process  $\{\state_k\}$ is simulated. 
  The aim is to compute $\model^* \in \Model$ that minimizes\footnote{Assume $\Model \subset \reals^p$ is compact and $C(\model)$ is a continuous  function so that the minimum exists.} the expected cost
\beq C(\model) = E_{\belief_\model}\{\cost(\state,\model)\}= \int_\statespace \cost(\state,\model) \belief_\model(\state) d\state . \label{eq:stochoptobj} 
\eeq
In stochastic optimization, the stationary distribution   $\pi_\model$  and/or the cost
$\cost(\state,\model)$ are not known explicitly\footnote{We assume that $\tpm$ is known but computing 
$\belief_\model$ in closed form is not tractable. More generally, $\tpm$ itself may not be known, but for any choice of $\model$, samples of $\cost(\state_k,\th)$ can be simulated.}; instead the sample path sequence $\{c(\state_k,\model), k=1,2,\ldots\}$  of (noisy) costs
can be observed for any choice of model $\model$.
Therefore, it is not possible
to compute the integral in (\ref{eq:stochoptobj}) explicitly.
This is in contrast to a   deterministic optimization problem where $\cost(\state,\model)$ and $\belief_\model$ are known so that 
 the right hand side of (\ref{eq:stochoptobj}) can be evaluated explicitly.

In stochastic optimization, a widely used algorithm to compute (estimate) the minimizer $\model^*$ of (\ref{eq:stochoptobj})  is the following stochastic gradient algorithm run over time $k=0,1,\ldots$
\beq  \model_{k+1} = \model_k - \stepsize_k \widehat\nabla_\model \Cost_k(\model_k) . \label{eq:sgrad} \eeq
Here  $\widehat {\nabla}_\model \Cost_k(\model_k)$ denotes an   estimate
of gradient $\nablam \Cost(\model)$ evaluated at $\model_k$.  Note that  $\widehat {\nabla}_\model\Cost_k(\model_k)$  needs to be computed using the observed realization  $\{c(\state_k,\model)\}$
 since this is the only information available.

In (\ref{eq:sgrad}),  $\stepsize_k$ is a positive scalar  at each time $k$ and denotes the  step size of the algorithm. There are two general philosophies for choosing the step size.
\begin{compactenum}
\item
 Decreasing step size $\stepsize_k  = 1/{k^\alpha} $, where $\alpha \in (0.5,1]$. Under reasonable conditions, it can be proved that  the
 stochastic gradient algorithm  (\ref{eq:sgrad}) converges with probability 1 to a local stationary point of $\Cost(\model)$.
\item Constant step size $\stepsize_k = \stepsize$ where $\stepsize $ is a small positive constant.  Under reasonable conditions, it can be proved that  
the  stochastic gradient algorithm  (\ref{eq:sgrad}) converges weakly to a local stationary point.  Such constant step size algorithms are useful for estimating a time-varying minimum.\end{compactenum}

\noindent  {\em Remark. Robbins Munro Algorithm}: The stochastic gradient algorithm
(\ref{eq:sgrad}) can be viewed as a special case of the following Robbins Munro algorithm.
 Suppose one wishes  to solve numerically the following equation for $\theta$:
\beq \E_{\belief_\theta}\{ \cost(\state,\theta) \} = 0, \text{  or equivalently }
\int_\statespace \cost(\state,\theta) \,\belief_\theta(\state) d \state = 0.  \label{eq:rmaim}
\eeq
Assume that for any choice of $\th$, one can obtain 
 random (noisy) samples $\cost(\state_k,\th)$. 
The Robbins Munro algorithm for solving (\ref{eq:rmaim})  is the following stochastic approximation algorithm:
$$  \theta_{k+1} = \theta_k + \stepsize_k \, \cost(\state_k,\theta_k) .$$
Notice that   the stochastic gradient algorithm (\ref{eq:sgrad})
can be viewed as an instance of  the Robbins Munro
algorithm for solving 
$\E_{\belief_\theta}\{ \nablam \Cost_k (\model) \} = 0 $.
Chapter \ref{chp:rl} shows that the Q-learning algorithm (reinforcement learning
algorithm) for solving MDPs is also  an instance of the  Robbins Munro algorithm.

\paragraph{Examples} 
Examples of stochastic gradient algorithms include the following:

{\em 1.   Least Mean Square (LMS) algorithm}: The LMS algorithm belongs to the  class of adaptive filtering algorithms and is used widely in 
 \index{LMS algorithm}
adaptive signal processing. The objective to minimize is  \index{LMS algorithm}  \index{adaptive filtering}
  $$\Cost(\model) = \E_\belief\{ (\obs_k - \psi_k' \model)^2\}$$ where $\obs_k$ and $\psi_k$ are observed
at each time $k$. Then using (\ref{eq:sgrad}) we obtain the LMS algorithm 
\beq  \model_{k+1} = \model_ k + \stepsize \psi_k (\obs_k - \psi^\p_k\model_k) \label{eq:lms1}\eeq
Note that this case is  a somewhat simple example of a stochastic optimization problem since we know
 $\cost(\state, \model) = (\obs_k - \psi_k' \model)^2$  explicitly as a function of  $\state= (\obs,\psi)$ and  $\model$.
 Also  the measure $\belief$  does not depend explicitly on $\model$. 
In Chapter \ref{sec:consensuslms}, we will discuss the consensus LMS algorithm.

{\em 2. Adaptive Modulation}: 
Consider a wireless  communication system where the 
packet error rate  of the channel $\state_k$ evolves according to a Markov process.
The modulation scheme is adapted over time depending on the observed (empirical) packet error
rate  $\obs_k$ measured at the receiver, where $\obs_k \in [0,1]$.    
 Suppose at each time slot $k$, one of two modulation schemes can be chosen, namely, $$\action_k \in \{r_1 \text{ (low data rate)}, r_2
 \text{ (high data rate)} \}.$$
 The modulation scheme $\action_k$ affects the  transition matrix (kernel) $\tp(\action_k)$ of the error probability process
  $\state_k$.
 Define the instantaneous throughput at time $k$ as $r(\state_k,\action_k) = (1- \state_k)  \action_k $.
 The aim is to maximize the average throughput
 $\lim_{\finaltime\rightarrow \infty} \frac{1}{\finaltime+1} \E_\policy\{ \sum_{k=0}^\finaltime r(\state_k,\action_k) \} $ where
 the policy $\mu$ maps the available information $\info_k =\{u_0,\obs_1,u_1,\ldots, u_{k-1},\obs_k\}$ to action $u_k$.

The setup is depicted in Figure \ref{fig:cognitive}. The problem is an average cost POMDP and in general intractable to solve.

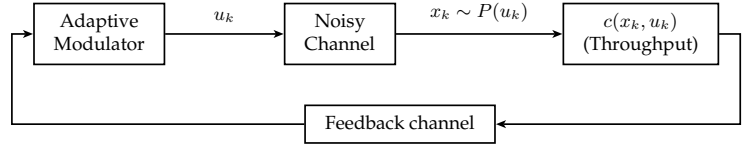
\begin{figure}[h] \centering
\scalebox{0.8}{

\begin{pspicture}[showgrid=false](0.5,-1.2)(14,1.55)


\dotnode[dotstyle=square*,dotscale=0.001](1.7,0){dot}

\psblock(2,.75){H1}{\begin{tabular}{c} Adaptive  \\ Modulator\end{tabular}}

\psblock(6,.75){B1}{\begin{tabular}{c} Noisy \\ Channel \end{tabular}}

\psblock(11,.75){B2}{\begin{tabular}{c} $\cost(\state_k,\action_k)$ \\ (Throughput)\end{tabular}}

\psblock(7,-.75){H2}{\begin{tabular}{c} Feedback channel  \end{tabular}}

\psset{style=Arrow}

\ncangle[arrows=<-,angleA=0,angleB=0]{H2}{B2}

\ncangle[angleA=180,angleB=180]{H2}{H1}


\ncline{H1}{B1} \naput[npos=.5]{$\action_k$}

\ncline{B1}{B2} \naput[npos=.5]{$\state_k \sim \tp(\action_k) $}

\end{pspicture}}
\caption{Schematic Setup of Adaptive Modulation Scheme}
\label{fig:cognitive}
\end{figure}

Consider the following ``simple'' parametrized policy for the adaptive modulation scheme:
\beq \action_{k+1} =  \policy_\model(\obs_k) =  \begin{cases} r_1 & \text{ if  } \obs_k \geq \theta  \\
					r_2  & \text{ if } \obs_k < \theta   \end{cases}  \label{eq:quantlevel}\eeq
 This family of policies $\policy_\model$ parametrized by $\model$ is  intuitive since the individual policies use a less aggressive modulation scheme (low data rate) 
 when the channel quality is poor (large error rate $\obs_k$), and vice versa.
 The aim is to determine the  value of $\th$ in the adaptive modulation policy $\policy_\model$ which maximizes the average throughput.

Let $\belief_\theta$ denote the  joint stationary probability distribution of the Markov process 
$(\state_k,\obs_{k-1})$ induced by the policy
$\policy_\model$. The aim is to determine $\th$ which maximizes the average throughput:
\beq  \text{Compute } \th^* = \argmax_{\th \in \Th}  R(\th), \;\;
\text{ where }  R(\th)  =  \E_{\belief_\theta}\{ r(\state_k, \policy(\obs_{k-1})) \}  
\label{eq:ththrough}
\eeq
given noisy measurements $r_k = (1- \obs_k) \action_k $ of the instantaneous throughput. 
An explicit formula for $\belief_\theta$ is not known  since it is a complicated function of
the modulation scheme, error correction codes, channel coding, medium access control protocol, etc. So (\ref{eq:ththrough}) is not a
 deterministic optimization problem.

Assuming $\E\{\obs_k\} = \state_k$ (the empirical observations of the channel are unbiased), then
$\th^*$ can be estimated using the stochastic gradient algorithm
$$ \th_{k+1} = \th_k + \epsilon_k\,  (\hat{\nabla}_\th r_k ).$$
Here $\hat{\nabla}_\th r_k $ is an estimate of the gradient
$\nabla_\model R(\model) = \int r(\state, \mu(\obs) ) \nabla_\model \belief_\model(\state,\obs) d\state d\obs$.
In \cite{MKS05} this approach is described in more detail.
In \secn \ref{sec:scorefn} and \secn \ref{sec:weakgrad}, two classes of simulation based algorithms are given to estimate the gradient with respect to the stationary distribution
of a Markov process. Alternatively, any of the  finite  difference methods in \secn \ref{sec:finitediff} can be used.


\section{How long to simulate a Markov chain?}

Suppose a  Markov chain is simulated for $n$-time points. Let
$\statespace = \{e_1,e_2,\ldots,e_\statedim\}$ denote the state space, where $e_i$ are the $\statedim$-dimensional unit vectors.  Given
$h\in \reals^\statedim$, let
$$ \avg_n =  \frac{1}{n} \sum_{k=0}^{n-1}  h^\p  \state_k  $$
denote the time averaged estimate.
By the strong law of large numbers, if $\tp$ is regular,
then as $n\rightarrow \infty$, $\avg_n \rightarrow h^\p \belief_\infty$ with probability 1 for any initial distribution $\belief_0$.
However, in practice, for finite sample size simulations, one needs to determine the accuracy of estimate and guidelines
for choosing the sample size $n$.
For sample size $n$, define the bias, variance and mean square deviation of the estimate $\avg_n$  as
\begin{align*}  \bias(\avg_n) &=  \E\{ \avg_n \} - h^\p  \belief_\infty  \\
\var(\avg_n) &=  \E\{\avg_n - \E\{\avg_n\} \}^2 \\
\msd(\avg_n) &=  \E\{ \avg_n - h^\p \belief_\infty\}^2 = \var(\avg_n) +   (\bias(\avg_n))^2.
\end{align*}
The main result is as follows; the proof is in \cite{Kri16}.

\begin{theorem} \label{thm:howlong}  Consider an $n$-point sample path of a  Markov chain $\{\state_k\}$ with regular transition matrix $\tp$.
Then 
\begin{align} & |\bias(\avg_n) | \leq \frac{  \max_{i,j}| {h}_i - h_j | }{n (1-\dob)} \,\dvar{\belief_0} {\belief_\infty}  \label{eq:avgexp}  \\
& \msd(\avg_n) \leq 2\, \frac{ \max_{i,j} |{h}_i - h_j|^2 }{n(1-\dob)} 
\sum_{i\in\statespace} \left(\dvar{e_i }{\belief_\infty} \right)^2 \belief_\infty(i) + O(\frac{1}{n^2})  \label{eq:msdexp} \\
&\prob \left( | \avg_n - E\{\avg_n\}| > \epsilon\right)  \leq 2 \exp\left( - \frac{ \epsilon^2  (1 - \dob)^2 n}{ \max_{l,m} |h_l - h_m|^2 } \right).
\label{eq:concmeasure}
\end{align}
Here, for any two pmfs $\belief,\bar{\belief}$,
$\dvar{\belief}{\bar{\belief}}$ denotes the variational distance
$$
\dvar{\belief}{\bar{\belief}} =  \frac{1}{2} \|\belief - \bar{\belief}\|_1 = \frac{1}{2} \sum_i |\belief(i) - \bar{\belief}(i)|
$$
and $\dob$ denotes the Dobrushin coefficient of  transition matrix $\tp$
$$
\dob =  \frac{1}{2}  \max_{i,j}  \sum_{l\in \statespace} | \tp_{il}- \tp_{jl} |.
$$
\end{theorem}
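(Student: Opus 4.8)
Everything rests on the Dobrushin contraction, which is immediate from the definition of $\dob$: $\dvar{\mu\tp}{\nu\tp}\le\dob\,\dvar{\mu}{\nu}$ for all pmfs $\mu,\nu$, and hence $\dvar{\belief_k}{\belief_\infty}=\dvar{\belief_0\tp^k}{\belief_\infty\tp^k}\le\dob^{\,k}\dvar{\belief_0}{\belief_\infty}$. I would pair this with one elementary fact used everywhere below: if $v\in\reals^\statedim$ satisfies $\sum_i v_i=0$, then $|h^\p v|\le\tfrac12\|v\|_1\max_{i,j}|h_i-h_j|$ (split $v$ into its positive and negative parts), so that $|h^\p(\mu-\nu)|\le\max_{i,j}|h_i-h_j|\,\dvar{\mu}{\nu}$ for pmfs $\mu,\nu$; applied to the rows of $\tp$, this gives the oscillation bound $\operatorname{osc}(\tp^m h):=\max_{i,j}\bigl|(\tp^m h)_i-(\tp^m h)_j\bigr|\le\dob^{\,m}\max_{i,j}|h_i-h_j|$. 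For the bias, write $\bias(\avg_n)=\tfrac1n\sum_{k=0}^{n-1}h^\p(\belief_k-\belief_\infty)$; since $\belief_k-\belief_\infty$ sums to zero, the two facts give $|h^\p(\belief_k-\belief_\infty)|\le\max_{i,j}|h_i-h_j|\,\dob^{\,k}\dvar{\belief_0}{\belief_\infty}$, and summing the geometric series $\sum_{k\ge0}\dob^{\,k}=1/(1-\dob)$ yields (\ref{eq:avgexp}).

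For the mean square deviation, since $\msd(\avg_n)=\var(\avg_n)+\bias(\avg_n)^2$ with the bias term $O(1/n^2)$ by the previous step, it suffices to bound $\var(\avg_n)=\tfrac1{n^2}\sum_{k,l=0}^{n-1}\cov(h^\p\state_k,h^\p\state_l)$. For $k\le l$, conditioning on $\state_k$ (whose law I write $\mu_k$) and using the Markov property,
\[
\cov(h^\p\state_k,h^\p\state_l)=\sum_i\mu_k(i)\,\bigl(h^\p(e_i-\mu_k)\bigr)\bigl((e_i-\mu_k)\tp^{\,l-k}h\bigr);
\]
applying the elementary estimate to each of the two factors, together with $\|(e_i-\mu_k)\tp^{\,l-k}\|_1\le\dob^{\,l-k}\|e_i-\mu_k\|_1$, bounds the summand by $\dob^{\,l-k}\bigl(\max_{i,j}|h_i-h_j|\bigr)^2\sum_i\mu_k(i)\bigl(\dvar{e_i}{\mu_k}\bigr)^2$. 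Since $\mu_k\to\belief_\infty$ geometrically at rate $\dob^{\,k}$, replacing $\mu_k$ by $\belief_\infty$ here costs only $O(\dob^{\,k})$. Finally $\sum_{k,l}\dob^{|k-l|}\le 2n/(1-\dob)$ while the $O(\dob^{\,k})$ remainders sum to $O(1)$, so dividing by $n^2$ produces exactly (\ref{eq:msdexp}).

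For the concentration bound I would run a Doob martingale: set $\F_k=\sigma(\state_0,\dots,\state_k)$, $M_k=\E\{\avg_n\mid\F_k\}$ for $k=0,\dots,n-1$, and $M_{-1}:=\E\{\avg_n\}$, so that $M_{n-1}-M_{-1}=\avg_n-\E\{\avg_n\}$. Writing $g_m(x)=(\tp^m h)_x=\E\{h^\p\state_{k+m}\mid\state_k=x\}$ and $\Phi_k=\sum_{m=0}^{n-1-k}g_m$, a short computation gives $M_k-M_{k-1}=\tfrac1n\bigl(\Phi_k(\state_k)-\E\{\Phi_k(\state_k)\mid\F_{k-1}\}\bigr)$, a bounded martingale difference whose conditional range is at most $\tfrac1n\operatorname{osc}(\Phi_k)\le\tfrac1n\sum_{m\ge0}\dob^{\,m}\max_{i,j}|h_i-h_j|=\max_{i,j}|h_i-h_j|/(n(1-\dob))$. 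Azuma--Hoeffding over the $n$ increments, with squared ranges summing to $(\max_{i,j}|h_i-h_j|)^2/(n(1-\dob)^2)$, then yields (\ref{eq:concmeasure}).

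I expect the variance estimate in the second part to be the main obstacle: pulling the exact weight $\bigl(\dvar{e_i}{\belief_\infty}\bigr)^2$ and the constant $2$ out of the double covariance sum, and confirming that the transient discrepancy between $\mu_k$ and the stationary law is genuinely confined to the $O(1/n^2)$ remainder. By contrast, the bias and the concentration bound each reduce, once the Dobrushin oscillation bound is in hand, to summing a geometric series and applying Azuma--Hoeffding respectively.
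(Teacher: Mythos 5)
Your proposal is correct, and it follows what is essentially the standard Dobrushin-coefficient route: the contraction $\dvar{\mu\tp}{\nu\tp}\le\dob\,\dvar{\mu}{\nu}$ plus the oscillation bound for the bias, a conditional covariance decomposition with geometric summation for the mean square deviation, and a Doob martingale with Azuma--Hoeffding for the concentration inequality. Note that the paper itself gives no proof of Theorem \ref{thm:howlong} (it defers to \cite{Kri16}), so there is no in-paper argument to compare against; your outline matches the approach that reference takes. One small remark: the interval-range form of Azuma--Hoeffding you invoke actually yields the exponent $-2\epsilon^2 n(1-\dob)^2/\max_{l,m}|h_l-h_m|^2$, a factor of two stronger than (\ref{eq:concmeasure}), so the stated bound follows with room to spare even if one uses the cruder absolute-increment version after adjusting constants.
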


\section{Gradient Estimators for Markov Processes}
Consider the stochastic optimization problem (\ref{eq:stochoptobj}). 
To  solve (\ref{eq:stochoptobj}) using the
   stochastic gradient algorithm (\ref{eq:sgrad})  requires
estimating the gradient $\nabla_\model C(\model)$.  
We wish to compute 
the gradient estimate  $\hat \nablam \Cost_\bsize(\model)$  using  the observed realization of costs $\{c(\state_k,\model)\}$, $k =0,\ldots,\bsize$.
Here $\state_k$ is an ergodic  Markov process with transition matrix  or kernel $\tpm$.
The rest of this chapter discusses  two  types of  simulation based gradient estimators for Markov processes:

\begin{compactitem}
\item Finite difference gradient estimators such as the Kiefer Wolfowitz and SPSA algorithms discussed in \chp  \ref{sec:finitediff}. 
They do not require knowledge of the transition kernel of the underlying Markov process.  However,
finite difference gradient estimators
suffer from a bias variance tradeoff.   
\item  Gradient estimators that exploit knowledge of the  Markov transition kernel. We discuss the 
score function gradient estimator  in \chp  \ref{sec:scorefn} and the weak derivative estimator below  in \chp  \ref{sec:weakgrad}.
 Unlike the finite difference estimators,  these gradient estimators are unbiased for random variables.
\end{compactitem}
 \chp  \ref{chp:rl} shows how these gradient estimators can be used for reinforcement learning of MDPs, constrained MDPs, and POMDPs.

\section{Finite difference gradient estimators and  SPSA} \label{sec:finitediff}

This section  describes two finite difference gradient estimators
that can be used with the stochastic gradient algorithm to minimize (\ref{eq:stochoptobj}).
The algorithms do not require the transition matrix of the Markov chain to be known.


\subsection{Kiefer Wolfowitz algorithm}  This  uses the two sided numerical approximation
of a derivative and is described in Algorithm \ref{alg:kwbasic}. 
Here $\Delta_n$ is the gradient step while $\epsilon_n$ is the step size of the stochastic approximation algorithm.
These need to be chosen so that
\beq \Delta_n > 0, \; \epsilon_n > 0, \; \Delta_n \rightarrow 0, \; \epsilon_n \rightarrow 0,\;  \sum_{n} \epsilon_n = \infty,\;  \sum_{n} \epsilon_n^2/ \Delta_n^2 < \infty. \label{eq:stepsizeseq} \eeq

A disadvantage of the Kiefer Wolfowitz algorithm is that 
 $2p$ independent simulations are required  to evaluate the gradient along  all the possible directions $i=1,\ldots,p$. 

\begin{algorithm}
\caption{Kiefer Wolfowitz  Algorithm ($\theta_n \in \reals^p$)} \label{alg:kwbasic}
 For iterations $n=0,1,2,\ldots$ 
\begin{itemize}
\item Evaluate the $2p$ sampled costs $ \hat{\Cost}_n(\model+\Delta_n e_i)$ and $\hat{\Cost}_n(\model - \Delta_n e_i)$, $i=1,2,\ldots,p$ where
 $e_i$ is a unit vector with 1 in element $i$. 
\item Compute gradient estimate   \beq \hat{\nabla}_{\model(i)} \Cost_n(\model) = \frac{1}{2\Delta_n}
\left[\hat{\Cost}_n(\model+\Delta_n e_i) - \hat{\Cost}_n(\model - \Delta_n e_i)\right],
\quad i=1,\ldots, p \label{eq:fdderiv} \eeq
 Here $\Delta_n= \frac{\Delta}{(n+1)^\gamma}$ denotes the gradient step size with $0.5 \leq \gamma \leq 1$ and $\Delta > 0$.
\item Update model estimate  $\model_n$  via 
stochastic gradient  algorithm
$$
\model_{n+1} = \model_n - \epsilon_{n}  \hat{\nabla} C(\model_n), \quad
\epsilon_n = \frac{\epsilon}{(n+1+s)^\zeta}, \quad 0.5 < \zeta \leq  1, \;
\text{ and }\epsilon , s > 0. $$
\end{itemize}
\end{algorithm}

\subsection{Simultaneous Perturbation Stochastic Approximation (SPSA)}   \index{SPSA algorithm}
The SPSA algorithm \cite{Spa03} has been pioneered  by J. Spall (please see  the website {\tt www.jhuapl.edu/SPSA/}). It overcomes the problem of  requiring $2p$ independent simulations 
in the Kiefer Wolfowitz algorithm by 
choosing  a single random direction in $\reals^p$ along which
to evaluate the finite difference gradient.
So only  2 simulations are required. 
The SPSA algorithm is described in Algorithm \ref{alg:spsabasic}.
The step sizes $\Delta_n$ and $ \epsilon_n$ are chosen as in (\ref{eq:stepsizeseq}).

In  Algorithm \ref{alg:spsabasic} the random directions have been chosen from a Bernoulli distribution. 
The random directions $d_n$ need to be chosen from a distribution such that 
the inverse moments $\E\{|1/d_n(i)|^{2+2\alpha}\}$ are finite for some $\alpha > 0$.
Suitable choices (apart from Bernoulli) include
segmented uniform and U-shaped densities; see  \cite[pp.185]{Spa03} for a detailed description.

Spall shows that SPSA is asymptotically as efficient as Kiefer Wolfowitz.

\begin{algorithm}[h]
\caption{SPSA Algorithm   ($\theta_n \in \reals^p$)} \label{alg:spsabasic}
 For iterations $n=0,1,2,\ldots$ 
\begin{itemize}
\item Simulate  the $p$ dimensional  vector $d_n$ with random elements 
$$d_n(i) = \begin{cases} 
-1 & \text{ with probability } 0.5 \\
+1 & \text{with probability } 0.5.\end{cases} $$
\item Evaluate sample costs $\hat{C}_n(\model_n+\Delta_n d_n)$ and $\hat{C}_n(\model_n-\Delta_n d_n)$
\item Compute gradient estimate   
$$  \hat{\nabla} C(\model_n) = \frac{\hat{C}_n(\model_n+\Delta_n d_n) - \hat{C}_n(\model_n-\Delta_n d_n)}{\displaystyle 2
\Delta_n } d_n.
$$
 Here $\Delta_n= \frac{\Delta}{(n+1)^\gamma}$ denotes the gradient step size with $0.5 \leq \gamma \leq 1$ and $\Delta > 0$.

\item Update model estimate  $\model_n$  via 
stochastic gradient  algorithm
$$ 
\model_{n+1} = \model_n - \epsilon_{n}  \hat{\nabla} C(\model_n), \quad
\epsilon_n = \frac{\epsilon}{(n+1+s)^\zeta}, \quad 0.5 < \zeta \leq  1, \;
\text{ and }\epsilon , s > 0.  $$ 
\end{itemize}
\end{algorithm}

Finite difference methods such as SPSA
suffer from the bias-variance tradeoff.
The bias
 in the gradient estimate is proportional to $\Delta^2$. 
On the other hand, if  $\hat{\Cost}_n(\model-\Delta e_i)$ 
and $\hat{\Cost}_n(\model+\Delta e_i)$ are sampled independently, then the variance
is proportional to $1/\Delta^2$.  To decrease the bias, one needs a small $\Delta$, but this results in an increase in the variance.


\section{Score Function Gradient Estimator} \label{sec:scorefn}   \index{gradient estimator! score function}
This section describes the score function gradient estimator. Unlike the SPSA algorithm, no finte difference approximation
is used. We  assume that the transition
kernel  $\tpm$ of the Markov process $\state_k$ is known but computing the stationary distribution 
$\beliefm$ is intractable. The aim is to compute the gradient estimate $\hat\nabla_\model \Cost_n(\model_n)$ in the stochastic
gradient algorithm (\ref{eq:sgrad}) given the realization of costs $\{\cost(\state_k,\model)\}$, $k \in \batch_n$ for any choice of $\model$.

\subsection{Score Function Gradient Estimator  for RVs} \label{sec:rvgrad}
To highlight the main ideas,
the score function gradient  estimator for random variables is discussed first.

 Assuming sufficient regularity
   to swap the order of integration and gradient, we have
 $$\nabla_\model C(\model) = \int c(X) \nabla_\model \pi_\model(X) dX
=\int c(X)\frac{ \nabla_\model \pi_\model(X)}{\pi_\model(X)} \pi_\model(X) dX$$
The score function algorithm proceeds as follows:  simulate $X_k \sim \pi_\model$ and compute for any $N$
$$\hat{\nabla_\model} C_\finaltime = 
\frac{1}{N} \sum_{k=1}^N
c(X_k)\frac{ \nabla_\model \pi_\model(X_k)}{\pi_\model(X_k)}
$$
The term ``score function'' stems from the fact that $\frac{ \nabla_\model \pi_\model(X)}{\pi_\model(X)}= \nabla_\model \log  \pi_\model $.
For any $\finaltime$, this is an unbiased estimator of $\nabla_\model C(\model)$.
The derivative of the log of density function is often called the ``score function'' in statistics.

\noindent {\bf Example}: If $\pi_\model(x) = \model e^{-\model x}$, then 
$\frac{ \nabla_\model \pi_\model(X)}{\pi_\model(X)} = 
\nabla_\model \log  \pi_\model = \frac{1}{\model} - X$.

\subsection{Score Function Gradient Estimator for Markov Process}
We now describe the score function simulation based
gradient estimator for a  Markov process. The  eventual goal is to solve finite state MDPs and POMDPs via
stochastic gradient algorithms. So
to 
avoid technicalities,
we  consider a finite state Markov chain $x$ and assume that transition matrix $\tpm$ is regular so that
there exists a unique  stationary distribution $\beliefm$. Therefore, for a cost $c(x)$, the expected cost is
%
$$\E_{\beliefm}\{ \cost(\state)\} = \sum_{i=1}^\statedim \cost(i)\, \beliefm(i) = \cost^\p \beliefm. $$
Suppose that for  any choice of $\th$, one can observe via simulation a sample path of the costs $c(\state_k)$, $k=1,2,\ldots$ where the
Markov chain
$\{x_k\} $ evolves with transition matrix $\tpm$.
Given such simulated sample paths, the aim is to estimate  the gradient
$$\nabla_\model\E_{\beliefm}\{c(\state)\} = \nabla_\model  \left( \cost^\p \belief_{\model} \right) .$$ 
This is what we mean by simulation based gradient estimation.
We  assume that  the transition matrix $\tpm$ is known but the stationary distribution $\beliefm$ is not known explicitly.  Also
the cost.  $\cost(\state)$ may not be known explicitly and the simulated sampled path $c(\state_k)$ may be observed in zero mean noise.
%

Clearly as $\bsize\rightarrow \infty$, for any initial distribution  $\belief_0$,
$ \lim_{\bsize \rightarrow \infty} \cost^\p \tpm'^\bsize  \belief_0 \rightarrow \cost^\p \beliefm $.  
%
So the finite sample approximation is (write $\tpm$ as $\tp$ to simplify notation)
\begin{align}
 \nabla_\model  \left( \cost^\p \belief_{\model} \right) &\approx \belief_0^\p \sum_{k=0}^{\bsize-1} \tp^{\bsize-k-1}\,  \nablam \tp \, \tp^k\,  \cost \label{eq:finitesf}\\
&= \sum_{x_{0:\bsize}}  \cost(\state_\bsize) \biggl[ (\nablam \tp_{\state_0\state_1}) \tp_{\state_1\state_2}\cdots \tp_{\state_{\bsize-1}\state_\bsize}
+  \tp_{\state_0\state_1} (\nablam  \tp_{\state_1\state_2}) \cdots \tp_{\state_{\bsize-1}\state_\bsize}  \nn \\ &  \hspace{2cm} + \cdots + 
  \tp_{\state_0\state_1}   \tp_{\state_1\state_2} \cdots (\nablam \tp_{\state_{\bsize-1}\state_\bsize})
\biggr] \belief_{\state_0}  \nn \\
&= \sum_{x_{0:\bsize}} \cost(\state_\bsize) \left[ \frac{\nablam \tp_{\state_0\state_1}}{ \tp_{\state_0\state_1}} + \cdots + \frac{\nablam \tp_{\state_{\bsize-1}\state_\bsize}}{ \tp_{\state_{\bsize-1}\state_\bsize}} \right] \tp_{\state_0\state_1}  \tp_{\state_1\state_2} \cdots  \tp_{\state_{\bsize-1}\state_\bsize} \belief_{x_0}  \nn \\
&= \E_{\state_{0:\bsize}} \left\{  \cost(\state_\bsize) \,  \sum_{k=1}^\bsize \frac{ \nablam \tpm_{\state_{k-1}\state_{k}}}
{  \tpm_{\state_{k-1}\state_{k}}} \right\} \nn
 \end{align}

This leads to the score function gradient estimator in Algorithm \ref{alg:score}.

\begin{algorithm}\caption{Score Function Gradient Estimation Algorithm} \label{alg:score}
\begin{compactenum}
\item[Step 1.] Simulate Markov chain $\state_0,\ldots,\state_{\bsize}$ with transition matrix  $\tpm$.
\item[Step 2.] Compute  $ S^\model_\bsize =   \sum_{k=1}^\bsize \frac{ \nablam \tpm_{\state_{k-1}\state_{k}}}
{  \tpm_{\state_{k-1}\state_{k}}}  $ 
This is evaluated recursively as $$S^\model_k =  \frac{ \nablam \tpm_{\state_{k-1}\state_{k}}}
{  \tpm_{\state_{k-1}\state_{k}}} + S^\model_{k-1},\quad  k=1,\ldots, \bsize.$$
\item[Step 3.]  Evaluate the score function gradient estimate  via simulation as 
$$\widehat{\nablam} C_\bsize(\model) = \frac{1}{\bsize} \sum_{k=1}^{\bsize} \cost (\state_k) S^\model_k.$$ (Note that this is the average over $\bsize$ samples).

\end{compactenum}
\end{algorithm}

\section{Weak Derivative Gradient Estimator} \label{sec:weakgrad}   \index{gradient estimator! weak derivative}
This section describes the weak derivative  gradient estimator. Like the score function estimator, for random variables it provides an unbiased
estimator of the gradient.

A probability distribution $\cdf_\model$  is weakly differentiable at a point $\model$ (on an open subset of $\reals)$ if there exists
a signed measure\footnote{Let $\Omega$ be the set of outcomes and $\mathcal{A}$ denote a sigma-algebra defined on $\Omega$.
Then 
 a signed measure $\nu$ is a  real valued function on  $\mathcal{A}$ which satisfies: (i) $\sigma$-additive - meaning that if
$A_i,i=1,2,\ldots$ are disjoint sets (events) in $\mathcal{A}$,  then $\nu (\cup_i A_i) = \sum_i \nu(A_i)$.  (ii) $\nu(\emptyset) = 0$. \\
A signed measure is finite if $\nu(\Omega) < \infty$.  Note a non-negative signed measure is called a measure.
Finally, $\nu$ is a probability measure if 
 $\nu(\Omega) = 1$.}
 denoted as $\nu_\model = \nabla_\model F$ such that 
\beq
\lim_{s\rightarrow 0} \frac{1}{s} \left[\int c(\state) d F_{\model + s} (\state) - \int c(\state) dF_\model(\state) \right] = \int c(\state) d\nu_\model (\state)\
\label{eq:weakderivativedefn}
\eeq for
 any bounded continuous function $c(\state)$.
The term weak derivative is used since $\nabla_\model \cdf(\state)$ may not be a function in the classical sense; it could be a  generalized function (e.g. Dirac delta function). Hence the above
 definition involves integration with a test function $\cost(\state)$.

A well known result in measure theory is that any finite signed measure $\nu$ can be decomposed as 
\beq \nu = \wderiv_1 \mu_1 - \wderiv_2 \mu_2
\label{eq:wddecomp0}
\eeq where $\wderiv_1$ and $\wderiv_2$ are constants and $\mu_1$ and $\mu_2$ are probability measures.
In our case, since the signed measure is  obtained as the derivative of a probability measure,  i.e., since  $\int d\cdf_\model(x) = 1$, therefore $\int  d \nu_\model (x) = 0$, implying that $\wderiv_1 = \wderiv_2$ in (\ref{eq:wddecomp0}).
So the definition of weak derivative of a probability distribution can be re-expressed as:
\begin{definition}
A probability distribution $\cdf_\model$  is weakly differentiable at a point $\model$ (on an open subset of $\reals)$ if there exist
probability distributions $\dot{\cdf}_\model$ and  $\ddot{\cdf}_\model$ and a constant $\wderiv_\model$ such that for
 any bounded continuous function $c(\state)$, the following holds:
 $$  \nabla_\model \int c(\state)  d\cdf_\model(\state) =  \wderiv_\model \left[ \int c(\state)  d \dot{\cdf}_\model(\state)  - \int c(\state) d\ddot{\cdf}_\model
 (\state) \right] .$$ \end{definition}
In more familiar engineering notation using probability density functions, the definition of the weak derivative is
\beq   \nabla_\model \int c(\state)  \pdf_\model(\state) =  \wderiv_\model \left[ \int c(\state)   \dot{\pdf}_\model(\state)  d\state- \int c(\state) \ddot{\pdf}_\model(\state) 
d \state) \right] \label{eq:wdpdf} \eeq
where $\dot{\pdf}_\model$ and $\ddot{\pdf}_\model$ are probability density functions,
or equivalently,
\beq \nabla  \pdf_\model = \wderiv_\model (  \dot{\pdf}_\model - \ddot{\pdf}_\model).  \label{eq:wddef2}\eeq
The weak derivative is specified by the triplet $(\wderiv_\model, \dot{\pdf}_\model,\ddot{\pdf}_\model)$.
A similar characterization holds in terms of probability mass functions.

In general  the  representation (\ref{eq:wddef2}) of the weak derivative is not unique. 
One specific representation of interest is obtained via the so called Hahn Jordan decomposition. This is a deep result in measure
theory - for our practical needs, the following simplistic version suffices:
A signed measure can 
be decomposed as in (\ref{eq:wdpdf}) such that densities $\dot{\pdf}_\model$ and $\ddot{\pdf}_\model$ are orthogonal. This means that the set of $x$ where density
$\dot{p}_\model(x)$ is non-zero coincides with the set of $x$ where density $\ddot{p}_\model(x)$ is zero, and vice versa.

 \subsection{Weak derivative of random variables}
Based on (\ref{eq:wddef2}), the weak derivative  gradient estimation for  a random variable is  as follows: Simulate $\finaltime$ samples
   $\dot{X}_k \sim \dot{\pdf}_\model$ and $\ddot{X}_k \sim \ddot{\pdf}_\model$, $k=1,2, \ldots, \finaltime$. Then 
 $$\hat{\nabla_\model} C_\finaltime = \frac{1}{N} \sum_{k=1}^\finaltime \wderiv_\model \left[  c(\dot{X}_k) - c(\ddot{X}_k) \right] .$$
 
 \noindent {\bf Examples}: Here are  examples of the weak derivative of random variables: 
\\
1. {\em Exponential}: 
 $\pi_\model(x) = \model e^{-\model x}$. Then the Hahn-Jordan decomposition is
 $$\nabla_\model\pi_\model(x) = e^{-\model x}(1 - x \model) I(x>\frac{1}{\model}) 
 - e^{-\model x}(x \model- 1) I(x\leq\frac{1}{\model}) $$
 So 
$\wderiv_\model = \model e$, $\dot{\pdf}_\model(x) = \model e ^{-\model x}(1 - x \model) I(x>\frac{1}{\model})$, $\ddot{\pdf}_\model(x) = \model e ^{-\model x}(x \model-1) I(x\leq \frac{1}{\model})$.
 
 \noindent 2. {\em Poisson}: For a Poisson random variable, the probability mass function is
 $$\pdf_\model(x) =  \frac{e^{\model} \model^x}{x!} , \quad x = 0,1,\ldots $$
So clearly, 
$ \nabla_\model \pdf_\model(x) = \pdf_\model(x-1) - \pdf_\model(x). $
So one possible weak derivative implementation  is $$\wderiv_\model = 1,\; \dot{\pdf}_\model = \pdf_\model(x-1),\; \ddot{\pdf}_\model = \pdf_\model(x).$$

\subsection{Weak Derivative Gradient Estimator for Markov Process}
Let $\beliefm$ denote  the stationary distribution of a Markov chain with
regular transition matrix $\tpm$. Suppose that for any choice of $\th$, one can observe via simulation  sample paths 
$\{\cost(\state_k)\}$ of the costs where Markov chain $\state_k$ evolves with transition  matrix $\tpm$.
The aim is to estimate via simulation the  gradient
$\nabla_\model\E_{\belief_\model}\{c(\state)\} = \nabla_\model  \left(\beliefm^\p    c \right) $.
 It is assumed that $\tpm$ is known but $\beliefm$ is not known explicitly. Also $\cost(\state)$ may not be known explicitly and the simulated sampled path $c(\state_k)$ may be observed in zero mean noise.

Analogous  to  random variable case (\ref{eq:wdpdf}), define the weak derivative of the transition
matrix $\tpm$ as follows:  A transition probability matrix $\tpm$ is weakly differentiable at a point $\theta$ (on an open subset of $\reals$)
if there exists transition probability matrices $\dot{\tpm}$ and  $ \ddot{\tpm}$  and a diagonal matrix  $\wderiv_\model$ such that: 
\begin{align} 
\nabla_\model \tpm  &= \wderiv_\model ( \dot{\tpm} - \ddot{\tpm}) , \label{eq:wdmarkov} \\
\text{ equivalently, }  & \nabla_\model \tpm_{ij} =  \wderiv_{\model, i} \,(\dot{\tpm}_{ij} -
 \ddot{\tpm}_{ij} ) , \quad i,j \in \{1,2,\ldots,\statedim\}.\nn
\end{align}
So the  weak derivative of  transition matrix $\tpm$ is specified by the triplet $(\dot{\tpm}, \ddot{\tpm}, \wderiv_\model)$.
Obviously, $\nablam \tpm \one = \nablam \one = 0$ implying that each row of $\nablam\tpm$ adds to zero.

\begin{theorem} For   regular transition matrix $\tpm$ with stationary distribution $\beliefm$ and cost vector $\cost \in \reals^\statedim$,
\beq \nabla_\model  \left(\belief_\model^\p    c \right) =  \belief_\model^\p  (\nabla_\model \tpm ) \sum_{k=0}^\infty \tpm ^k  c 
\label{eq:wdexact}
\eeq
\end{theorem}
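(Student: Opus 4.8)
\emph{Proof proposal.} The plan is to differentiate the two relations that characterise the stationary distribution, namely $\beliefm^\p\tpm=\beliefm^\p$ and $\beliefm^\p\one=1$, to obtain a linear system for $\nabla_\model\beliefm$, to invert that system using the fundamental matrix of the chain, and finally to simplify the answer by exploiting the fact that every row of $\nabla_\model\tpm$ sums to zero. It suffices to treat $\model$ as scalar, the vector case following componentwise. Regularity of $\tpm$ guarantees that $\beliefm$ depends smoothly on $\model$ (it is a rational function of the entries of $\tpm$, which in turn are smooth in $\model$), so the differentiations are legitimate.

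First I would differentiate $\beliefm^\p\tpm=\beliefm^\p$ to get $(\nabla_\model\beliefm)^\p(I-\tpm)=\beliefm^\p(\nabla_\model\tpm)$, and differentiate $\beliefm^\p\one=1$ to get $(\nabla_\model\beliefm)^\p\one=0$. Next I would set $\Pi:=\one\beliefm^\p$ and verify, using $\beliefm^\p\tpm=\beliefm^\p$, $\tpm\one=\one$ and $\beliefm^\p\one=1$, the identities $\Pi\tpm=\tpm\Pi=\Pi^2=\Pi$; these give $(\tpm-\Pi)^k=\tpm^k-\Pi$ for all $k\ge1$. Since $\tpm$ is regular, $\tpm^k\to\Pi$, hence $(\tpm-\Pi)^k\to0$, so the spectral radius of $\tpm-\Pi$ is strictly less than one and $I-\tpm+\Pi$ is invertible with $(I-\tpm+\Pi)^{-1}=\sum_{k=0}^\infty(\tpm-\Pi)^k$.

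I would then observe that $(\nabla_\model\beliefm)^\p\Pi=(\nabla_\model\beliefm)^\p\one\,\beliefm^\p=0$, so adding this term to the differentiated stationarity relation yields $(\nabla_\model\beliefm)^\p(I-\tpm+\Pi)=\beliefm^\p(\nabla_\model\tpm)$, whence $(\nabla_\model\beliefm)^\p=\beliefm^\p(\nabla_\model\tpm)\sum_{k=0}^\infty(\tpm-\Pi)^k$ and, multiplying by $c$ on the right, $\nabla_\model(\beliefm^\p c)=\beliefm^\p(\nabla_\model\tpm)\sum_{k=0}^\infty(\tpm-\Pi)^k c$. To finish, write $v^\p:=\beliefm^\p(\nabla_\model\tpm)$; since $\tpm\one=\one$ forces $(\nabla_\model\tpm)\one=0$, we have $v^\p\one=0$, hence $v^\p\Pi=0$ and therefore $v^\p(\tpm-\Pi)^k=v^\p\tpm^k$ for every $k\ge0$. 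Substituting term by term gives $\nabla_\model(\beliefm^\p c)=v^\p\sum_{k=0}^\infty\tpm^k c=\beliefm^\p(\nabla_\model\tpm)\sum_{k=0}^\infty\tpm^k c$, which is (\ref{eq:wdexact}).

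The one subtlety I would be explicit about — and the only real obstacle — is that $\sum_{k=0}^\infty\tpm^k$ does not converge on its own, since $\tpm^k\to\Pi\ne0$; the series appearing in (\ref{eq:wdexact}) is well defined precisely because it is premultiplied by the zero-row-sum vector $\beliefm^\p(\nabla_\model\tpm)$, on which it coincides termwise with the convergent Neumann series $\sum_{k=0}^\infty(\tpm-\Pi)^k$. Equivalently, one may phrase the result with the genuine fundamental matrix $(I-\tpm+\one\beliefm^\p)^{-1}$ in place of $\sum_{k=0}^\infty\tpm^k$, which makes the statement literally convergent. Everything else is routine linear algebra.
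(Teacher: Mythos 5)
Your proof is correct. Note that the paper itself does not print a proof of this theorem --- it is deferred to \cite{Kri16} --- so there is no in-text argument to compare against; judged on its own, your derivation is complete. Differentiating $\beliefm^\p\tpm=\beliefm^\p$ and $\beliefm^\p\one=1$, inverting via the fundamental matrix $(I-\tpm+\one\beliefm^\p)^{-1}=\sum_{k\ge 0}(\tpm-\one\beliefm^\p)^k$, and then discarding the $\one\beliefm^\p$ terms because $\beliefm^\p(\nabla_\model\tpm)\one=0$ is the standard perturbation argument for stationary distributions, and it meshes with the surrounding text: the paper's remark after (\ref{eq:wdmarkov}) that every row of $\nabla_\model\tpm$ sums to zero is exactly the fact you invoke, and your closing observation about convergence explains why the truncated estimate (\ref{eq:wdapprox}) is meaningful. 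The one caveat is the one you already raised yourself: as displayed, (\ref{eq:wdexact}) has to be read as $\sum_{k\ge 0}\beliefm^\p(\nabla_\model\tpm)\tpm^k c$ (equivalently with the fundamental matrix in place of $\sum_k \tpm^k$), since $\sum_k \tpm^k$ alone diverges; together with the smoothness of $\model\mapsto\beliefm$, which you justify via rationality of the stationary distribution in the entries of $\tpm$, no gaps remain.
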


The above theorem leads to the following finite sample estimate:
\beq \nabla_\model  \left(\belief_\model^\p    c \right) \approx  \belief_0^\p (\tpm)^m  (\nabla_\model \tpm )  \sum_{k=0}^\bsize (\tpm)^k c \label{eq:wdapprox}
\eeq for any initial distribution $\belief_0$.
For sufficiently large $m$ and $\bsize$ (\ref{eq:wdapprox}) will approach (\ref{eq:wdexact}). The sample path interpretation of (\ref{eq:wdapprox})  leads to 
the weak derivative estimation  Algorithm \ref{alg:weak}. Step 1 simulates  $\belief_0^\p (\tpm)^m $, Step 2 implements the weak derivative $(\nabla_\model \tpm )$ and propagates
 the two chains $\dot{x}$ and $\ddot{x}$ for $\bsize$ steps. Finally Step 3 estimates the right hand side of (\ref{eq:wdapprox}). So Step 3 yields that
  $$ \widehat{\nabla}_\model C_\bsize(\model)  = \wderiv_{\model,\state_m} \sum_{k=m}^{\bsize} c(\dot{\state}_k) - c(\ddot{\state}_k) .$$
 
 Note if the Markov chains $\dot{\state}$ and $\ddot{\state}$ are simulated with common random numbers, then
  at some time point $\tau$,  $\dot{\state} _\tau = \ddot{\state}_\tau$,  and the processes $\dot{x}$,
 $\ddot{x}$ merge and evolve identically after time~$\tau$. This yields Step 3 of Algorithm~\ref{alg:weak}.

\begin{algorithm}\caption{Weak Derivative based Gradient Estimation Algorithm} \label{alg:weak}
Evaluate triplet $(\dot{\tpm}, \ddot{\tpm}, \wderiv_\model)$ using formula $ \nabla_\model \tpm_{ij} =  \wderiv_{\model,i}\, (\dot{\tpm}_{ij} - \ddot{\tpm}_{ij} ) $.
\begin{compactenum}
\item[Step 1.] Simulate  the Markov chain $\state_0,\ldots\state_{m-1}$ with transition matrix  $\tpm$.
\item[Step 2a.] Starting with state $\state_{m-1}$, simulate at time $m$ the states  $\dot{\state}_{m}$
and $\ddot{\state}_{m}$ with  transition matrix $\dot{\tpm}$ and $\ddot{\tpm}$, respectively.
\item[Step 2b.]  Starting with states $\dot{\state}_{m}$ and $\ddot{\state}_m$, respectively,
 simulate the two Markov chains $\dot{x}_{m+1}, \dot{x}_{m+2},\ldots,\dot{x}_\bsize $  and  $\ddot{x}_{m+1}, \ddot{x}_{m+2},\ldots,
 \ddot{x}_\bsize $ with transition matrix ${\tpm}$. \\
Use  the same random numbers to simulate these Markov chains 
\item[Step 3.]  Evaluate the weak derivative estimate via simulation as  
\beq 
\widehat{\nabla}_\model C_{m,\bsize}(\model)  = \wderiv_{\model,\state_m} \sum_{k=m}^{m+\tau} c(\dot{\state}_k) - c(\ddot{\state}_k) ,
\text{ where } \tau = \min\{k: \dot{x}_k = \ddot{x}_k, k \leq \bsize\}.  \label{eq:wddecomp}
\eeq
\end{compactenum}
\end{algorithm}

\section{Bias and Variance of Gradient Estimators} \label{sec:biasvarwd}
Here we characterize the statistical properties of the score function and weak derivative gradient estimators 
discussed above.

\begin{theorem} \label{thm:scoremarkov}
For a Markov chain with initial distribution $\belief_0$,  regular transition matrix $\tpm$ with coefficient of ergodicity $\dob$   and stationary distribution $\beliefm$:
\begin{compactenum}
\item The score function gradient estimator of Algorithm \ref{alg:score} has: 
\begin{compactenum}
\item Bias: $ \E\{\widehat{\nablam } C_\bsize \}-  \nablam \cost^\p \beliefm = O(1/\bsize)$.   
\item Variance: $\var\{\widehat{\nablam } C_\bsize \} = O(\bsize) $ 
\end{compactenum}
\item The weak derivative gradient estimator of Algorithm \ref{alg:weak} has: 
\begin{compactenum}
\item Bias: $ \E\{\widehat{\nablam } C_{m,\bsize} \} - \nablam \cost^\p \beliefm= O(\dob^m) \, \dvar{\belief_0}{\beliefm} + O(\dob^\bsize) $
\item Variance: $\var\{\widehat{\nablam } C_\bsize \} = O(1) $ 
\end{compactenum}
\end{compactenum}
\end{theorem}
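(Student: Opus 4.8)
\noindent\emph{Proof plan.} Both estimators have expectations that can be written in closed form through powers of $\tpm$, so the plan is to reduce the two bias estimates to the geometric ergodicity bounds controlled by the Dobrushin coefficient, $\|\belief_0^\p\tpm^k-\beliefm^\p\|_1\le 2\dob^k\dvar{\belief_0}{\beliefm}$ and $\|\tpm^k\cost-(\beliefm^\p\cost)\one\|_\infty\le \dob^k\max_{i,j}|\cost_i-\cost_j|$, together with the identity $(\nablam\tpm)\one=0$ (each row of $\nablam\tpm$ sums to zero). For the variances I would use a martingale argument (score function) and a coupling time moment bound (weak derivative).

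\smallskip\noindent\emph{Score function, Algorithm~\ref{alg:score}.} First I would extend the computation behind (\ref{eq:finitesf}) from the terminal term to every term, getting $\E\{\cost(\state_k)\,S^\model_k\}=\nablam(\belief_0^\p\tpm^k\cost)$ for each $k$ (the same product rule rearrangement applies, since the initial law $\belief_0$ does not depend on $\model$). Then $\E\{\widehat{\nablam}C_\bsize\}=\frac{1}{\bsize}\sum_{k=1}^\bsize\nablam(\belief_0^\p\tpm^k\cost)$, so the bias is the Ces\`aro average of $\nablam(\belief_0^\p\tpm^k\cost)-\nablam(\beliefm^\p\cost)$. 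To bound the $k$-th term I would expand $\nablam(\tpm^k)=\sum_{j=0}^{k-1}\tpm^j(\nablam\tpm)\tpm^{k-1-j}$, substitute $\belief_0^\p\tpm^j=\beliefm^\p+O(\dob^j)$ on the left and $\tpm^{k-1-j}\cost=(\beliefm^\p\cost)\one+O(\dob^{k-1-j})$ on the right, and use $(\nablam\tpm)\one=0$ to annihilate the constant pieces; what survives telescopes into $\nablam(\beliefm^\p\cost)=\beliefm^\p(\nablam\tpm)\sum_{k\ge0}\tpm^k\cost$, i.e.\ (\ref{eq:wdexact}), plus a remainder of size $O(k\dob^k)$. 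Summing the convergent series $\sum_k k\dob^k$ and dividing by $\bsize$ then gives a bias of order $1/\bsize$. For the variance, I would observe that $S^\model_k$ is a martingale with respect to $\{\state_0,\dots,\state_k\}$, because $\E\{\nablam\tpm_{\state_{k-1}\state_k}/\tpm_{\state_{k-1}\state_k}\mid\state_{k-1}\}=\sum_j\nablam\tpm_{\state_{k-1}j}=0$; orthogonality of its increments gives $\E\|S^\model_k\|^2=O(k)$ (finite state space), hence $\E\|\cost(\state_k)S^\model_k\|^2=O(k)$, and Cauchy--Schwarz on $\E\|\sum_{k=1}^\bsize\cost(\state_k)S^\model_k\|^2\le\bsize\sum_{k=1}^\bsize O(k)=O(\bsize^3)$ yields $\var\{\widehat{\nablam}C_\bsize\}=O(\bsize)$.

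\smallskip\noindent\emph{Weak derivative, Algorithm~\ref{alg:weak}.} Here I would condition on the pre-branch state, which is distributed as $\belief_0^\p\tpm^m$, and then on the branch pair $(\dot\state_m,\ddot\state_m)$. Because the two chains are driven by common random numbers, $\cost(\dot\state_k)-\cost(\ddot\state_k)$ vanishes identically once they couple, so $\E\{\sum_{k\ge m}[\cost(\dot\state_k)-\cost(\ddot\state_k)]\mid\dot\state_m,\ddot\state_m\}=(\dot\state_m-\ddot\state_m)^\p\sum_{j\ge0}\tpm^j\cost$. Averaging over $\dot\state_m\sim\dot\tpm$ and $\ddot\state_m\sim\ddot\tpm$ conditioned on the branch state, and using $\nablam\tpm=\wderiv_\model(\dot\tpm-\ddot\tpm)$, I expect the uncapped conditional mean of $\widehat{\nablam}C_{m,\bsize}$ to equal $\belief_0^\p\tpm^m(\nablam\tpm)\sum_{j\ge0}\tpm^j\cost$ --- the right side of (\ref{eq:wdexact}) with $\beliefm^\p$ replaced by $\belief_0^\p\tpm^m$ --- so the discrepancy is at most $\|\belief_0^\p\tpm^m-\beliefm^\p\|_1$ times a constant, that is $O(\dob^m)\dvar{\belief_0}{\beliefm}$. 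Capping the coupling sum at $\bsize$ discards terms only on $\{\tau>\bsize\}$, which has probability $O(\dob^\bsize)$ by geometric coupling, and this accounts for the extra $O(\dob^\bsize)$ in the bias. For the variance, I would use $\|\widehat{\nablam}C_{m,\bsize}\|\le\max_i|\wderiv_{\model,i}|\,\bigl(\max_{i,j}|\cost_i-\cost_j|\bigr)\,\tau$ together with $\P(\tau>j)\le\dob^j$, which gives $\E\{\tau^2\}=O(1)$ uniformly in $\bsize$ and hence $\var\{\widehat{\nablam}C_{m,\bsize}\}=O(1)$.

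\smallskip\noindent\emph{Main obstacle.} The delicate step is the geometric decay in the score function bias: $\nablam(\tpm^k)$ is a sum of $k$ non-decaying matrices and is itself only $O(1)$ --- it has to be, since $\nablam(\belief_0^\p\tpm^k\cost)\to\nablam(\beliefm^\p\cost)\neq0$ --- so decay appears \emph{only} after the cancellation $(\nablam\tpm)\one=0$ has peeled off the fundamental matrix limit, and one must track the surviving cross terms carefully to land on a \emph{summable} $O(k\dob^k)$ bound rather than a non-summable one. A secondary point is the geometric tail $\P(\tau>j)\le\dob^j$ of the coupling time in Algorithm~\ref{alg:weak}: the common random number coupling there need not be maximal, so strictly one either assumes a coupling realizing the rate $\dob$ or accepts some rate $\dob'<1$, which does not change the form of the claimed bounds.
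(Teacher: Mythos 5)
You cannot be compared against a written-out proof here: the paper only states Theorem \ref{thm:scoremarkov} and defers the proof to \cite{Kri16}, offering in \secn\ \ref{sec:biasvarwd} just a heuristic for the variance orders (a single accumulating path for the score function versus the difference of two coupled paths for the weak derivative). Your sketch is consistent with that heuristic and, as far as I can check, correct and complete at the level of a proof outline. For the score function, the per-term identity $\E\{\cost(\state_k)S^\model_k\}=\nablam(\belief_0^\p\tpm^k\cost)$ is the right extension of (\ref{eq:finitesf}), and your expansion $\nablam(\tpm^k)=\sum_{j=0}^{k-1}\tpm^j(\nablam\tpm)\tpm^{k-1-j}$ combined with the two contractions (in total variation on the left, in span seminorm on the right) and $(\nablam\tpm)\one=0$ does produce exactly the summable $O(k\dob^k)$ per-term error you identify as the delicate point, so the Ces\`aro average gives the $O(1/\bsize)$ bias; the martingale property of $S^\model_k$ plus Cauchy--Schwarz is crude but suffices for the $O(\bsize)$ variance upper bound, which is all the theorem asserts. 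For the weak derivative, conditioning at the branch, computing the coupled difference through the marginal laws, and comparing with (\ref{eq:wdexact}) via $(\nablam\tpm)\one=0$ gives the $O(\dob^m)\,\dvar{\belief_0}{\beliefm}$ term, and the moment bound on the coupling time gives $O(1)$ variance, matching the paper's stated decomposition. Two small remarks. First, the coupling-rate caveat you raise is needed only for the variance: for the bias, the truncation error can be bounded without any property of the common-random-number coupling, because terms after the coupling time vanish identically whether or not they are discarded, so the discarded tail has conditional expectation $\sum_{j>\bsize}(e_{\dot\state_m}-e_{\ddot\state_m})^\p\tpm^{j}\cost=O(\dob^\bsize)$ by the span contraction alone. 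Second, in Algorithm \ref{alg:weak} the branch actually occurs from $\state_{m-1}$ (so the pre-branch law is $\belief_0^\p\tpm^{m-1}$ and the scale factor should be read as $\wderiv_{\model,\state_{m-1}}$); this off-by-one, present in the paper's own indexing, does not affect the order $O(\dob^m)$ of the bound.
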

The proof is in \cite{Kri16}.
The result shows that  despite the apparent simplicity of the score function gradient estimator (and its widespread use), the weak derivative estimator performs better in both bias and variance.
The variance of the score function estimator  actually
grows with sample size! We will show in numerical examples for reinforcement learning of MDPs in \chp  \ref{sec:numericalwd}, that the weak derivative estimator has a substantially smaller variance
than the score function gradient estimator.

Why is  the variance of the score function gradient estimator $O(N)$ while the variance of the weak derivative estimator is $O(1)$?
The weak derivative estimator uses the difference of two sample paths, $\{\dot{x}_m\}$ and $\{\ddot{x}_m\}$, $m=1,2\ldots,N$. Its variance
is dominated by a term of the form $\sum_{m=1}^N g^\p (\tpm^m)^\p (\belief_0 - \bbelief_0)$ where $\belief_0$ and $\bbelief_0$ are the
distributions  of $\dot{x}_0$ and $\ddot{x}_0$. This sum is bounded by $\text{constant} \times  \sum_{m=1}^N \rho^m $ which is $O(1)$ since  $\rho < 1$ for a regular
transition matrix $\tpm$.
In comparison, the score function estimator uses a single sample path and its variance is dominated by a term of the form  $\sum_{m=1}^N g^\p (\tpm^m)^\p \belief_0 $.
This sum grows  as $O(N)$. 

\section{\pwe}

This chapter  presented an introductory  description of simulation based gradient estimation and is not meant to
be a comprehensive account. The proofs and further details appear in \cite{Kri16}. We have discussed only {\em scalar} step size stochastic approximation algorithms.

The books \cite{Hay13,Say08,SK95} present  comprehensive accounts
 of adaptive filtering. Adaptive filtering constitutes an important class of  stochastic approximation algorithms - \cite{BMP90,KY03} are {\em the} books
 in the analysis of stochastic approximation algorithms. The SPSA algorithm was pioneered by Spall; see \cite{Spa03} and the website
\url{http://www.jhuapl.edu/spsa/} for repository of code and references. \index{SPSA algorithm}
  \cite{Pfl96} is an excellent book for
coverage of simulation based gradient estimation; indeed, \secn \ref{sec:scorefn} and \secn \ref{sec:weakgrad} are based on \cite{Pfl96}.

In the  discussion of gradient estimators we have omitted the important topic of 
infinitesimal perturbation analysis  (IPA) and process derivatives.

\chapter{Reinforcement Learning}  \label{chp:rl}

\minitoc 
 Stochastic dynamic programming
assumes that the MDP or POMDP model is completely specified.
 This chapter presents simulation-based  stochastic approximation algorithms for estimating  the
 optimal policy of MDPs when  the transition probabilities are not known. {\em Simulation-based}  means that although
the transition probabilities 
 are unknown,  the decision maker 
can observe the system trajectory under any choice of control actions.
 The simulation-based algorithms given in this chapter also apply as  suboptimal methods for solving  POMDPs.

The following  algorithms are presented in this chapter:
\begin{compactenum}
\item  The Q-learning algorithm is described in \secn \ref{sec:qlearning}.
It uses the Robbins Munro algorithm (described in Chapter \ref{chp:simulation}) to estimate the value function for an  unconstrained MDP.
It is also shown how a primal-dual Q-learning algorithm can be used for MDPs with monotone optimal policies.
The Q-learning algorithm also applies as  a suboptimal method for POMDPs.
\item Policy gradient 
 algorithms are then presented.  These use   gradient estimation (described
  in Chapter \ref{chp:simulation}) of the cost function
together with a stochastic gradient algorithm to estimate the optimal policy. The policy gradient algorithms apply to MDPs and constrained MDPs. They also   \index{policy gradient algorithm}
 yield a suboptimal policy search method for POMDPs. 
 \end{compactenum}
   
Some terminology: Determining the optimal policy of an MDP or POMDP when the parameters are not known falls under the class of stochastic 
{\em adaptive} control problems.
Stochastic adaptive control algorithms are of  two types: {\em direct
  methods}, where the unknown transition probabilities $\tp_{ij}(\action)$ 
are estimated
 simultaneously while updating the control policy, and {\em
  implicit methods}  (such as simulation based methods), where the
transition probabilities are not directly estimated in order to
compute the control policy.
In this chapter, we focus on implicit simulation-based algorithms
for solving   MDPs and POMDPs. These are also called {\em reinforcement learning algorithms}.
One motivation for such implicit methods is that since they are simulation based, only regions of the state space visited by the simulated sample
path are used to determine the controller. Effort is  not wasted on determining parameters for low probability regions which are rarely
or never visited.

\section{Q-learning Algorithm} \index{Q-learning}  \label{sec:qlearning}

The Q-learning algorithm is a widely used reinforcement learning algorithm.
%
 As described below, the Q-learning algorithm  is the  simply the Robbins Munro stochastic approximation
algorithm  (\ref{eq:rmaim})  of Chapter \ref{chp:simulation}  applied to estimate the value function of Bellman's dynamic programming equation.

\subsection{Discounted Cost MDP} \label{sec:qld}
Bellman's dynamic programming equation for a discounted cost MDP (with discount factor $\discount$) reads
\begin{align}
\valuef(i) &= \min_{\action\in \actionspace}  \left( c(i,\action) + \discount\, \sum_j \tp_{ij}(\action) \valuef(j)\right)  \nonumber \\
&=   \min_{\action\in \actionspace} \left( c(i,\action) + \discount\, \E\{ \valuef(\state_{k+1})| \state_k = i, \action_k = u\} \right) \label{eq:valueq}
 \end{align}
 For each state action pair $(i,\action)$  define the Q-factors as
 $$Q(i,\action) = c(i,\action) + \discount\, \sum_j \tp_{ij}(\action) \valuef(j), \quad i \in \statespace, u \in \actionspace.$$ 
 We see from (\ref{eq:valueq}) 
that the Q-factors  $Q(i,\action)$ can be expressed as
\begin{align}
Q(i,\action) &= c(i,\action) +\discount\,  \sum_j \tp_{ij}(\action) \min_{u'} Q(j,u') \label{eq:ql1}\\
&= c(i,\action) +\discount\, \E\{ \min_{u'} Q(\state_{k+1},u') | \state_k = i, \action_k = u) \}. \label{eq:qupdate}
\end{align}
 Equation  (\ref{eq:valueq}) has an expectation $ \E$  inside
the minimization, whereas (\ref{eq:qupdate}) has the expectation {\em outside} the minimization. It is this crucial observation that forms  the basis
for using stochastic approximation algorithms to estimate the Q-factors.  Indeed  (\ref{eq:qupdate}) is simply  
of the form
\beq  \E\{ f(Q) \} = 0  \label{eq:rmq}\eeq
where the random variables $f(Q)$ are defined as
\beq f(Q) =    c(\state_k,\action_k) +  \discount\, \min_{u'} Q(\state_{k+1},u')  - Q(\state_k,\action_k) . \label{eq:fdefq}\eeq
The Robbins Munro algorithm ((\ref{eq:rmaim}) in \chp   \ref{sec:contso})  can be  used to estimate the solution $Q^*$
of (\ref{eq:rmq}) as follows: Generate a sequence of estimates $\hQ_k$ as
\beq  \hQ_{k+1}(\state_{k}, \action_{k})  = \hQ_k(\state_k,\action_k)  + \epsilon_k f(\hQ_k)   \label{eq:qlearning}\eeq
The step size $\epsilon_k$ is chosen\footnote{In general, the decreasing step size of a stochastic approximation algorithm
needs to satisfy $\sum_k \epsilon_k =\infty$ and $\sum_k \epsilon_k^2 < \infty$} as
 \beq \epsilon_k =  \frac{\epsilon}{\text{Visit}(i,\action,k)}  \label{eq:qlearnss}\eeq
where $\epsilon$ is a positive  constant and $\text{Visit}(i,\action,k)$ is the number
of times the state-action pair $(i,\action)$ has been visited until time $k$ by the
algorithm.

\begin{algorithm}[h]
\caption{Q-learning Algorithm} \label{alg:qlearning}
For $n=0,1,\ldots, $ (slow time scale): \\
\mbox{} \hspace{0.5cm} 
Update policy as $\policy_n(i) =  \min_{\action \in \actionspace} \hQ_{n\Delta}(i,\action)$ for $i=1,2,\ldots,\statedim$.
\\
\mbox{} \hspace{1cm} For $k= n \Delta , n\Delta + 1,\ldots, (n+1)\Delta -1$   (fast time scale)\\
\mbox{} \hspace{1.5cm}
Given state $\state_k$, 
choose action $\action_k = \policy_n(\state_k)$. \\
\mbox{}  \hspace{1.5cm} 
Simulate next state $x_{k+1} \sim \tp_{\state_k,\state_{k+1}}(\action_k)$. \\
\mbox{}  \hspace{1.5cm} Update Q-factors as  (step size $\epsilon_k$ is chosen according to (\ref{eq:qlearnss}))
$$   \hQ_{k+1}(\state_k, \action_k)  = \hQ_k(\state_k,\action_k)  + \epsilon_k \left[ c(\state_k,\action_k) +  
\discount\,\min_{u'} \hQ_k(\state_{k+1},u')  - \hQ_k(\state_k,\action_k) \right] $$
\end{algorithm}

 Algorithm~\ref{alg:qlearning} summarizes the entire
procedure as a two-time scale stochastic approximation algorithm.
On the fast time scale,
 the Q factors
are updated applying the same policy for a fixed period of time
slots referred to as {\em update interval}, denoted as $\Delta$ in Algorithm \ref{alg:qlearning}.  After that $n$-th interval, the
new policy $\policy_{n+1}(i)$ is chosen based on current Q-factors
as  $\policy_{n+1}(i) = \min_{\action \in \actionspace} \hQ_{(n+1)\Delta}(i,\action)$. This update is done on the slow time scale.

Note that  Q-learning does not require explicit knowledge of the transition probabilities - all that is needed is access to the 
controlled system so as to measure its next state $\state_{k+1}$ when an action $\action_k$ is applied.
 For a 
finite state MDP, Q-learning algorithm converges with
probability one to the optimal solution of Bellman's equation; see \cite{KY03,BT96} for conditions and proof. Please refer to
\cite{BY12} for novel variations of the Q-learning algorithm for discounted cost MDPs.

\subsection{Primal-Dual Q-learning for Submodular MDPs} \label{chp:pdq}  \index{Q-learning! primal-dual Q-learning}
If we know the  optimal policy of an MDP is monotone, how can Q-learning exploit this structure?  \index{primal-dual algorithm! Q-learning}
If the Q-factors of
a MDP are submodular, then
the optimal policy has a monotone structure. Suppose we do not have explicit knowledge of the transition matrix, but we know that the costs and transition
matrix satisfy sufficient conditions (see \cite{Kri16}) so that  the Q-factors  are  submodular.
 {\em How can the Q-learning algorithm be designed to exploit this submodular property of the Q-factors?}

The submodularity condition is a linear inequality constraint on $Q$:  \index{submodularity! primal-dual Q-learning}
\beq  Q(i,\action+1) - Q(i,\action)  \leq Q(i+1,\action+1) - Q(i+,\action)  \label{eq:submodcons}\eeq
Write this as the inequality constraint 
\beq Q M \geq 0   \label{eq:submodcons2}\eeq where  $\geq$ is element-wise, and the definition of $M$ is obvious from (\ref{eq:submodcons}).

In order to incorporate the constraint (\ref{eq:submodcons2}), it is convenient to  interpret Q-learning as a stochastic gradient algorithm that minimizes
an objective function.
Accordingly, define $g(Q) $ so that $\nabla_Q g(Q) = -f(Q) $ where $f(\cdot)$ is defined in (\ref{eq:fdefq}).
Then we can write  (\ref{eq:rmq}) as 
$  \E\{ f(Q) \} = \nabla_Q \E\{ g(Q) \} = 0$. Then Q-learning can be interpreted as a stochastic approximation algorithm 
to find $$ Q^* = \argmin_Q  \E\{ g(Q) \} $$
From (\ref{eq:ql1}) and (\ref{eq:fdefq}),  $\nabla^2_Q  f(Q)  = -\nabla_Q g(Q) $ is a diagonal matrix with non-negative  elements and 
hence  positive semidefinite.
Therefore $g(Q)$ is convex.

As the objective is convex and the constraint set (linear inequality) is convex, we can use the primal-dual stochastic approximation
algorithm to estimate $Q^*$:
\begin{align*}
\hQ_{k+1} &=\hQ_k + \epsilon_k^{(1)} \left[  f(\hQ_k) +  \lambda_k M \right] \\
\lambda_{k+1} &= \max[  \lambda_k -\epsilon_k^{(1)}Q_k M,\; 0].
\end{align*}
Here $\lambda_k \geq 0$ are interpreted as Lagrange multipliers for the constraint (\ref{eq:submodcons}).
The step sizes $\epsilon_k^{(1)}$ and $\epsilon_k^{(1)}$ are evaluated as in (\ref{eq:qlearnss}). For numerical examples see \cite{DK07}.

\section{Policy  Gradient  Reinforcement Learning  for MDP} \label{sec:policygrad}  \index{policy gradient algorithm! MDP|(}
The Q-learning algorithms described in \secn \ref{sec:qlearning}  operate in the value space and aim to estimate the value function.
The rest of this chapter focuses on solving MDPs and POMDPs using reinforcement learning algorithms that operate in the {\em policy} space. 
That is, with
$\policy_\model$ denoting a policy parametrized by $\model$, the aim is to minimize the expected cumulative  cost
$\E\{\Cost_\bindex(\policy_{\model})\}$ with respect to $\model$ by using a 
stochastic gradient
algorithm of the form\footnote{We use $n$ to denote the batch time index. The policy gradient algorithm  operates on batches of data
where each batch comprises of $\bsize$ time points.}
 \beq  \th_{\bindex+1} = \th_\bindex - \stepsize_\bindex \hat\nabla_\th \Cost_\bindex(\policy_{\model_\bindex}) . \label{eq:sgrad01} \eeq
Here $\Cost_\bindex(\policy_{\model_\bindex})$ denotes the observed cumulative cost by the decision maker when using policy $\policy_{\model_\bindex}$ and 
$ \hat\nabla_\th \Cost_\bindex(\policy_{\model_\bindex}) $ denotes the estimated gradient of the cost $\E\{\Cost_\bindex(\policy_{\model})\}$ evaluated
at $\policy_{\model_\bindex}$.
The phrase ``policy gradient algorithm'' applies to algorithm (\ref{eq:sgrad01}) since it moves along the gradient of the cost in parametrized policy space
$\th$ to determine
the optimal parametrized policy. 

One way of implementing the policy gradient algorithm (\ref{eq:sgrad01}) is to use the finite difference SPSA Algorithm \ref{alg:spsabasic}.  In this section we focus on using the more
sophisticated score function and weak derivative
 gradient estimators of Chapter \ref{chp:simulation}  
to design  policy gradient algorithms
for solving MDPs and constrained MDPs.

Consider an average cost unichain  MDP \cite{Put94}  where $\state_k \in \statespace=\{1,2,\ldots,\statedim\}$ is the state and $\action_k\in \actionspace = \{1,2,\ldots,\actiondim\}$ is the action.
Then  the Markov process  $$z_k= (\state_k,\action_k)$$ has transition matrix 
given by
\beq
\begin{split}  \tp_{i,\action,j,\baction}(\th)
&=  \prob(\state_{k+1}=j,\action_{k+1}=\baction \mid \state_k=i,\action_k=\action) = \th_{j \baction}\,
\tp_{ij}(\action)\\
\text{ where }& \quad  \th_{j\baction}= \prob(\action_{k+1} = \baction| \state_{k+1} = j).  
\end{split}   \label{eq:tpfull} \eeq
The action probabilities $\th$  defined in (\ref{eq:tpfull}) specify the policy for the MDP. 
Let $\beliefm(i,a)$ denote  the stationary distribution of the  Markov process $z_k$. 
Then  we can solve for $\beliefm(i,a)$ as a linear program   and then computed the action probabilities as
$ \th_{ia} = \beliefm(i,a)/ \sum_{i=1}^\statedim \beliefm(i,a)$.
%

Unfortunately, if the transition probabilities $ \tp_{ij}(\action)$ are not known, then the LP      cannot be solved.
Instead, here we consider the following  equivalent formulation  for the optimal policy $\th^*$:
\beq \begin{split}   \th^* &=  \argmin_{\th \in \Model} \Cost(\th),\quad
  \Cost(\th) =  \E_{\beliefm} \{ \cost(\state,\action)\} = \sum_{i \in \statespace} \sum_{\action\in \actionspace} \beliefm(i,a) \cost(i,a) . \\
\Th &= \{\th_{ia} \geq 0, \sum_{a\in \actionspace} \th_{ia} =1 \}.
\end{split}
\label{eq:mlobj}
\eeq
Note from (\ref{eq:mlobj}) that  the optimal policy specified by $\th^*$ depends  on the stationary distribution $\belief_\th$
rather than the unknown transition probabilities $\tp(\action)$.

\subsection{Parametrizations of  Policy} \label{chp:policyparam}
To estimate the optimal $\th^*$ in (\ref{eq:mlobj}),
we need to ensure that $\th^* \in \Th$.
To this end, it is convenient to 
parametrize the action probabilities
 $\th$  by some judiciously chosen parameter vector $\psi$ so that 
\begin{align}
      \th_{ia}(\psi) =  \prob(u_n = a  | \state_n=i) , \quad a \in \actionspace = \{1,2\ldots,\actiondim\},\;i \in \statespace = \{1,2,\ldots,\statedim\}
  \nn
\end{align}
The  optimization problem is then
\beq
\min_{\psi \in \Psi} C(\psi) , \quad \text{ where } 
C(\psi) = \E_{\belief_{\psi}}\{c(x,u)\} 
\eeq
Note that the instantaneous cost is independent of $\psi$ but the expectation is with respect to a measure parametrized by $\psi$.

With the above parametrization,
we will  use the   stochastic gradient algorithm
\beq  \psi_{\bindex+1} = \psi_\bindex - \stepsize_\bindex \hat\nabla_\psi \Cost_\bindex(\psi_\bindex)  \label{eq:sgrad2} \eeq
 to estimate the minimum  $\th(\psi^*)$. Here $\hat\nabla_\psi \Cost_\bindex(\model(\psi_\bindex))$ denotes an estimate
 of the gradient $\nabla_\psi \Cost(\model(\psi))$ evaluated at $\psi_\bindex$.
 The aim is obtain a gradient estimator which does not require explicitly knowing the transition matrices $\tp(\action)$ of the MDP.
The algorithm (\ref{eq:sgrad2})  needs to
operate recursively on batches of the observed system trajectory
$\{(\state_k,\action_k), k \in \{\bindex \bsize, (\bindex+1)\bsize-1\}\}$
to yield
a sequence of estimates $\{\psi_\bindex\}$ of the optimal solution $\psi^*$.

Before proceeding with  algorithm (\ref{eq:sgrad2}) and the gradient estimator,
we first introduce 
 two useful parametrizations $\psi$ that automatically encode the constraints (\ref{eq:mlobj}) on the action probabilities 
$\th$. 

\noindent  {\em 1. Exponential Parametrization}:
The  exponential parameterization for $\th$ is
\beq
\th_{ia}(\psi) = {e^{\psi_{ia}}\over\sum_{u\in\actionspace} e^{\psi_{iu}}},
\quad \psi_{ia} \in \reals, i \in \statespace, \; a \in \actionspace =  \{1,2,\ldots, \actiondim\}. \label{eq:expoparam}
\eeq
In (\ref{eq:expoparam}), the $\psi_{ia} \in \reals$ are unconstrained, yet clearly the  constraint (\ref{eq:mlobj})  holds.
 Note $\psi$ has dimension $\actiondim \statedim $.

\noindent  {\em 2. Spherical Coordinate Parametrization}: To each value
$\th_{iu}$  associate the values $\lambda_{iu} =
\sqrt{\th_{iu}}$. Then (\ref{eq:mlobj}) yields 
$\sum_{u \in \actionspace} \lambda^2_{iu} =1$, and
$\lambda_{iu}$ can be interpreted as the coordinates of a vector that lies
on the surface of the unit sphere in $\reals^\actiondim$.
  In spherical coordinates, the angles are
$\a_{ia}, a =1,\ldots \actiondim-1$, and the radius is 
unity. For  $\actiondim \geq 2$, the spherical coordinates
parameterization $\a$ is defined as:
\begin{equation} \label{eq:alfadef}
\th_{iu}(\a) = 
\begin{cases} \cos^2(\a_{i,1}) & \text{if } u =1\\
              \cos^2(\a_{i,u}) \prod_{p=1}^{u-1} \sin^2(\a_{i,p}) &
2 \leq u\leq \actiondim -1 \\
\sin^2(\a_{i,\actiondim-1)}) \prod_{p=1}^{\actiondim-2} \sin^2(\a_{i,p})  &u = \actiondim
\end{cases}.
\end{equation}
 To summarize, the spherical coordinate  parameters are  $$\a_{ia} \in \reals, \quad i=1,2,\ldots,\statedim,\; a=1,2,\ldots,\actiondim-1. $$
 The $\psi_{ia}$  are unconstrained, yet the constraint (\ref{eq:mlobj}) holds.
 
 Note $\psi$ has dimension $(\actiondim-1) \statedim$.
 For example, if $\actiondim=2$, $\th_{i1} = \cos^2 \a_{i,1}$ and $\th_{i2} = \sin^2 \a_{i,1}$ where $\a_{i1}$ is 
 unconstrained; clearly
$\th^2_{i1} + \th^2_{i2} = 1$.  \index{policy gradient algorithm! MDP|)}

\section{Score Function Policy Gradient Algorithm  for MDP}  
\label{sec:sfmdp}
 \index{gradient estimator! score function}   \index{policy gradient algorithm! score function|(}
This section
uses the score function gradient estimator of Algorithm  \vref{alg:score}  to estimate
  $\hat\nabla_\psi\Cost_\bindex(\psi_\bindex)$.
Together with the stochastic gradient algorithm (\ref{eq:sgrad2}), it constitutes a reinforcement learning algorithm for estimating
the optimal policy $\psi^*$ for the MDP without 
requiring knowledge of the  transition matrices.

We now describe the score function estimator for an MDP.
Consider  the augmented Markov process $z_k=(\state_k,\action_k)$. 
From the  transition probabilities
 (\ref{eq:tpfull}), it follows that
\beq \nabla_\psi  \tp_{i,\action,x,\baction}(\th(\psi)) =\tp_{ix}(\action) \, \nabla_\psi  \th_{x\baction}(\psi) .   
\eeq
The aim is to estimate the gradient  with respect to each component $\psi_{xa}$ for the exponential and spherical parametrizations defined above.

For  the exponential parametrization (\ref{eq:expoparam}),  $\nabla_{\psi_{xa}} \th_{xa}(\psi) = 
\th_{xa} - \th_{xa}^2$ and  $\nabla_{\psi_{xa}} \th_{x\baction}(\psi) = 
 - \th_{xa} \th_{x\baction}$ for $\baction \neq a$. So for $a=1,2,\ldots,\actiondim$,
Step 2 of  Algorithm  \ref{alg:score} for the $\bindex$-th batch comprising of times $k \in \{\bindex\bsize+1,\ldots, (\bindex+1)\bsize\}$ is
\begin{align} S_k^{\psi_{xa}} &= \frac{ {\nabla_{\psi}}_{xa} \tp_{z_{k-1},z_k}(\th)}
{  \tp_{z_{k-1}z_k}(\th)}  + S_{k-1}^{\psi_{xa}}  \label{eq:score1}  \\  
\text{ where } & \frac{ {\nabla_\psi}_{x,a} \tp_{z_{k-1},z_k}(\th)}
{  \tp_{z_{k-1}z_k}(\th)} = \begin{cases} 1 - \model_{\state_k,\action_k}  & \text{ if } a = u_k, x = x_k \\
                     						- \th_{\state_k,a}  & \text{ if } a \in \actionspace - \{u_k\},  x = x_k \\
								0 & \text{ otherwise. }
\end{cases}
\nn
\end{align}
If instead we use 
the spherical coordinates  (\ref{eq:alfadef}), then Step 2 of  Algorithm  \ref{alg:score}  for $a=1,2,\ldots, \actiondim-1$ is:
\begin{align} S_k^{\psi_{xa}} &= \frac{ {\nabla_{\psi}}_{xa} \tp_{z_{k-1},z_k}(\th)}
{  \tp_{z_{k-1}z_k}(\th)}  + S_{k-1}^{\psi_{xa}}  \label{eq:scorespherical}  \\  
  \frac{ {\nabla_\a}_{x a} \tp_{z_{k-1},z_k}(\th)}
{  \tp_{z_{k-1}z_k}(\th)} &=  \begin{cases} \frac{2}{\tan \a_{\state a}} &  a < u_{k} , \state = \state_k \\
							- 2 \tan \a_{\state a}  &  a = u_k, \state = \state_k \\
							0   & a  > u_{k+1}
\end{cases} \nonumber
\end{align}
Finally,
for either parametrization,
Step 3 of Algorithm \ref{alg:score} for the $n$-th batch reads: $\widehat{\nabla}_{\psi_{xa}} C_n(\model) = \frac{1}{\bsize} \sum_{k=1}^\bsize \cost (\state_k,\action_k) S^{\psi_{xa}}_k$.

The stochastic gradient algorithm (\ref{eq:sgrad2}) together with score $S_k^\psi$ constitute a parameter free reinforcement learning
algorithm for solving a MDP. As can be seen from (\ref{eq:score1}),  explicit knowledge of the 
transition probabilities $\tp(\action)$ or costs $\cost(\state,\action)$ are not required; all that is required is that the cost and next state
can be obtained (simulated)  given the current state by action.

 
 The main issue with the  score function gradient estimator is its  large variance  as  described in Theorem \ref{thm:scoremarkov}.
To reduce the variance,  \cite{BB02} replaces  Step~2 with
\beq S^\psi_k =   \frac{ \nabla_\psi \tp_{\state_{k-1}\state_{k}}(\th)}
{  \tp_{\state_{k-1}\state_{k}}(\th) } + \beta\, S^\psi_{k-1} 
\label{eq:discountscore}
\eeq where $\beta\in (0,1)$ is a forgetting factor.
Other variance reduction techniques
 \cite{Pfl96} include
regenerative estimation, finite horizon approximations.
 
\section{Weak Derivative Gradient Estimator for MDP}  \label{sec:wdmdp}
 \index{policy gradient algorithm! weak derivative |(}
This section uses  the weak derivative gradient estimator of  Algorithm  \vref{alg:weak}  to estimate
 $\hat\nabla_\psi\Cost_k(\model_k)$. 
Together with the stochastic gradient algorithm (\ref{eq:sgrad2}), it constitutes a reinforcement learning algorithm 
for estimating the optimal policy $\psi^*$ for an MDP.

Consider  the augmented Markov process $z_k=(\state_k,\action_k)$. 
From the  transition probabilities
 (\ref{eq:tpfull}), it follows that
\beq \nabla_\psi  \tp_{i,\action,x,\baction}(\th) =\tp_{ix}(\action) \, \nabla_\psi  \th_{x,\baction} . \label{eq:wdfact} \eeq

Our plan is as follows:
Recall  from Algorithm \vref{alg:weak}  that
 the weak derivative estimator generates two Markov process $\dot{z}$ and $\ddot{z}$. The weak derivative  representations we  will choose below 
 imply that 
 $\dot{z}_k = z_k$ for all time $k$, where $z_k$ is the sample path of the MDP.
 So we only need to worry about simulating the process $\ddot{z}$. It is shown later in this section
that  $\ddot{z}$ can also be obtained from  the MDP sample path $z$ by using  cut-and-paste arguments.

For the exponential parameterization (\ref{eq:expoparam}), one obtains from (\ref{eq:wdfact}) the following derivative with respect
to each component $\psi_{xa}$:
$$  \nabla_{\psi_{xa}} P(i,u,x,\baction)  =  \th_{xa} ( 1- \th_{xa}) \left[ \tp_{ix}(u) \,I(\baction = a)  -  \tp_{ix}(u)  \,\frac{ \th_{x\baction}}{1 - \th_{xa}} 
\, I(\baction \neq a)
\right]. $$

For the spherical coordinate parameterization (\ref{eq:alfadef}), 
elementary calculus yields  that with respect to each component
$\a_{xa}$, $a=1,2,\ldots, \actiondim-1$
$$  \nabla_{\a_{xa}} P(i,u,x,\baction)  =  -2  \th_{xa} \, \tan \a_{xa}
\left[ \tp_{ix}(u) \,I(\baction = a) -  \tp_{ix}(u) \frac{\th_{x\baction}}{\th_{xa} \, \tan^2 \a_{xa}}  I(\baction > a)\right]. $$
Comparing this with the weak derivative decomposition (\ref{eq:wdmarkov}),  
we can use  the weak derivative 
gradient estimator of Algorithm \ref{alg:weak}.   This yields the Algorithm \ref{alg:wdmdp} which is the weak derivative estimator an MDP.

\begin{algorithm}
\caption{Weak Derivative Estimator for MDP} \label{alg:wdmdp}
Let $k=0,1,\ldots,$ denote local time within the $n$-th batch.
\begin{compactitem}
\item[Step 1.] Simulate $z_0,\ldots,z_{m-1}$ with transition probability $\tp_{i,\action,j,\baction}(\th(\psi))$ defined in (\ref{eq:tpfull}).
\item[Step 2a.] Starting with $z_{m-1}$, choose $\dot{z}_m = z_m = (\state_m,\action_m)$. Choose $\ddot{\state}_m = \state_m$.\\
Choose 
 $\ddot{z}_m = (\ddot{\state}_m,\ddot{\action}_m)$ where
$\ddot{\action}_{m}$ is simulated with 
\begin{align*}
\prob( \ddot{\action}_{m} = \baction )  &=  \frac{ \th_{\state_{m}\baction}}{1 -  \th_{\state_{m}a}},    \quad \baction \in \actionspace - \{a\}   \text{ (exponential parameterization)} \\
\prob( \ddot{\action}_{m} = \baction )  &=  \frac{\th_{x_m \baction}}{\th_{x_m a} \, \tan^2 \a_{x_m a}},  \quad  \baction \in \{a+1,\ldots,\actiondim\}
  \text{ spherical coordinates)} \end{align*}
\item [Step 2b.] 
 Starting with $\dot{z}_{m}$ and $\ddot{z}_m$,
 simulate the two Markov chains $\dot{z}_{m+1}, \dot{z}_{m+2},\ldots $  and  $\ddot{z}_{m+1}, \ddot{z}_{m+2},\ldots $ with transition 
 probabilities as in Step 1.
(Note $\dot{z}_k = z_k$ for all time $k$ by construction.)
\item[Step 3.]  Evaluate the weak derivative estimate for the $n$-th batch  as 
\beq  \widehat{\nabla}_{\psi_{xa}}  C_n(\model)  = \wderiv_{x_m a} 
\sum_{k=m}^{m+\tau} \cost(z_k) -
 c(\ddot{z}_k), \text{ where } \tau = \min\{k: {z}_k = \ddot{z}_k\}, \label{eq:wdcostexpo}
\eeq
$$ \wderiv_{x a} = \begin{cases}  \th_{xa} ( 1- \th_{xa}) &  \text{ exponential parametrization }  \\
-2  \th_{xa} \, \tan \a_{xa} & \text{ spherical coordinates } 
\end{cases}
$$
\end{compactitem}
\end{algorithm}

Finally, the stochastic gradient algorithm (\ref{eq:sgrad2}) together with Algorithm \ref{alg:wdmdp}
result in a policy gradient algorithm for solving an MDP.

\subsection*{Parameter Free Weak Derivative Estimator}
Unlike the score function gradient estimator (\ref{eq:score1}), the weak derivative estimator   requires  explicit knowledge of the 
transition probabilities $\tp(\action)$ in order to propagate the process $\{\ddot{z}_k\}$ in the evaluation of
(\ref{eq:wdcostexpo}) in Algorithm \ref{alg:wdmdp}.

How can the weak derivative  estimator in  Algorithm \ref{alg:wdmdp} be modified to work without knowledge of the transition matrix? This is described in 
\cite{Kri16,KV12}
We need to propagate $\ddot{z}_k$ in Step 2b.
This is done  by a cut and paste technique originally proposed by \cite{HC91}.
Given $\ddot{z}_m = (\state_m, \ddot{u}_m)$ at time $m$, define $$\nu = \min\{k > 0: z_{m+k} = ( \state_m, \ddot{\action}_m)\}. $$  
Since $z$ is unichain, it follows that  $\nu$ is finite with probability one.
Then $\ddot{z}$ is constructed as follows: 
\\
{\em Step (i)}:
Choose $\ddot{z}_{m+k} = z_{m + \nu+k}$ for $k=1,2,\ldots, \batchsize$ where $\batchsize$ denotes some pre-specified batch size.
\\ {\em Step (ii)}: Compute  the cost differences in (\ref{eq:wdcostexpo}) as
$$\sum_{k=m}^{\batchsize-\nu} \cost(z_k) - \cost(\ddot{z}_k)  = \sum_{k=m}^{m+\nu-1} c(z_k)  + \cancel{\sum_{k=m+\nu}^{\batchsize-\nu} c(z_{k}) }
- \cancel{\sum_{k=m+\nu}^{\batchsize - \nu} c(z_k)}
 - \sum_{k=\batchsize-\nu + 1}^\batchsize c(z_k)   $$
Another possible implementation is
\beq \sum_{k=m}^\batchsize c(z_k) - \sum_{k=m+\nu}^{\batchsize} c(\ddot{z}_k) - \nu\, \hat{c} =
\sum_{k=m}^{\batchsize - \nu-1} c(z_k)   - \nu \, \hat{c}
\label{eq:wdcostest} \eeq
Here $ \hat{c} $ denotes any estimator that converges
as $\batchsize \rightarrow \infty$  to $C(\a)$ a.s., where $C(\a)$ is defined in (\ref{eq:mlobj}).  For example,
$ \hat{c} = \frac{1}{\batchsize} \sum_{m=1}^\batchsize  c (z_m)$.
Either of the above implementations in (\ref{eq:wdcostexpo}) together with Algorithm \ref{alg:wdmdp} results in
a policy gradient algorithm (\ref{eq:sgrad2}) for solving the MDP without explicit knowledge of the transition matrices.
 \index{policy gradient algorithm! weak derivative |)}

\section{Numerical Comparison of  Gradient Estimators}
\label{sec:numericalwd}
 This section compares the  score function gradient estimator (used in the influential paper  \cite{BB02})
 with the parameter free weak derivative  estimator of \secn \ref{sec:wdmdp}. It is shown that the weak derivative estimator
 has substantially smaller variance.

 The following 
MDP was simulated:  $\statespace=\{1,2\}$ (2 states), $\actiondim= 3$ (3 actions),
\[
\tp(1) = 
\begin{pmatrix}
0.9 & 0.1 \\
0.2 & 0.8 
\end{pmatrix}, \;\;
\tp(2) =
\begin{pmatrix}
0.3 &  0.7\cr
0.6 & 0.4 \cr
\end{pmatrix}, \;\;\tp(3) = 
\begin{pmatrix}
0.5 &  0.5\cr
0.1 & 0.9 \cr
\end{pmatrix}.
\]
The action probability matrix  $(\th(i,\action))$ and cost 
matrix $(c(i,\action))$ were chosen as:
\[  ( \th(i,a)) = \begin{bmatrix}
0.2 &  0.6 &  0.2 \\ 0.4&  0.4&   0.2 \end{bmatrix},\quad
(c(i,a)) = 
-\begin{bmatrix}
50.0 &   200.0 &  10.0 \\ 
3.0 &    500.0 &   0.0 \end{bmatrix}
\]
 We work 
with the exponential parametrization (\ref{eq:expoparam}). First we compute the ground-truth.
By solving the linear program for
the  optimal parameter $\psi^*$, we obtain the true derivative at $\psi^*$
as
\begin{equation} \label{eq:theorval}
\nabla_{\psi}[C(\th(\psi))]
=\begin{pmatrix}
  -9.010  & 18.680 &   -9.670  \\    
-45.947  & 68.323 & -22.377            
\end{pmatrix}. 
\end{equation}
We simulated the parameter free weak derivative estimator using (\ref{eq:wdcostest})  in Algorithm \ref{alg:wdmdp}
for the exponential parametrization.
 For 
batch sizes $N=100$ and $1000$ respectively,  the 
weak derivative  gradient estimates are
\begin{align*}
 \GF_{100} &=
\begin{pmatrix}
-7.851 \pm 0.618    &  17.275 \pm 0.664   &  -9.425 \pm 0.594 \\
-44.586 \pm 1.661 &  66.751 \pm 1.657    &  -22.164\pm 1.732 \\
\end{pmatrix}\\
\GF_{1000} &= 
\begin{pmatrix}
 -8.361 \pm 0.215    &  17.928 \pm 0.240    &  -9.566 \pm 0.211 \\
 -46.164 \pm 0.468   &  68.969 \pm 0.472    &  -22.805 \pm 0.539 \\
\end{pmatrix}.
\end{align*}
The numbers after $\pm$ above, denote the confidence
intervals at level
$0.05$  with $100$ batches.
The variance of the  gradient estimator  is shown in
Table \ref{tab:PhanTheta}, together with the corresponding CPU time. 
\begin{table}[h]
\centering
\begin{tabular}{|c|| c | c | c| } \hline
$N=1000$  &  
  \multicolumn{3}{|c|}{$\var [\GF_N]$} \\
 \hline
$ i = 1 $ & 1.180 &  1.506 & 1.159   \\
 $i = 2 $ &  5.700 & 5.800 &  7.565 \\
\hline
CPU   & \multicolumn{3}{c|}{1 unit} \\
\hline
\end{tabular}
\caption{Variance of weak derivative estimator with  exponential parametrization}
\label{tab:PhanTheta}
\end{table}

We implemented the score function gradient estimator of \cite{BB02}
with the  following
parameters: forgetting
factor $1$,  batch
sizes of $N=1000$ and 10000. In both cases a total number of $10,000$
batches were simulated.
The score function gradient estimates are
\begin{align*}
 \GS_{10000} &=
\begin{pmatrix}
 -3.49  \pm 5.83     &  16.91 \pm 7.17 & -13.42 \pm 5.83 \\
 -41.20 \pm 14.96 &  53.24 \pm 15.0  &  -12.12 \pm 12.24 
\end{pmatrix}\\
 \GS_{1000} &= 
\begin{pmatrix}
-6.73 \pm 1.84 & 19.67 \pm 2.26 &  -12.93 \pm 1.85 \\
-31.49 \pm 4.77 & 46.05 \pm 4.75 &  -14.55 \pm 3.88
\end{pmatrix}
\end{align*}
The variance of the score function gradient estimates are given
Table \ref{tab:PhanScore}. 
\begin{table}[h]
\centering
\begin{tabular}{|c| c | c | c |} \hline
$N=1000$  & \multicolumn{3}{c|}{$\var [\GS_N]$}  \\
 \hline
$ i = 1 $ & 89083 & 135860 & 89500  \\
 $i = 2 $ & 584012 & 593443 & 393015 \\
\hline
CPU & \multicolumn{3}{|c|}{687 units}  \\
\hline
\end{tabular}
\begin{tabular}{|c| c | c | c |} \hline
$N=10000$  & \multicolumn{3}{c|}{$\var [\GS_N]$}  \\
 \hline
$ i = 1 $ & 876523 & 1310900 & 880255  \\
 $i = 2 $ & 5841196 & 5906325 & 3882805 \\
\hline
CPU & \multicolumn{3}{|c|}{6746 units}  \\
\hline
\end{tabular}
\caption{Variance of Score Function estimator}
\label{tab:PhanScore}
\end{table}

Notice that even with substantially larger batch sizes and number of batches
(and hence computational time),
the variance of the score function estimator is orders of magnitude larger
than the weak derivative  estimator.
For further numerical results please see \cite{VK03,VKM03,KV12}.

\section{Policy  Gradient  Reinforcement Learning  for Constrained MDP (CMDP)} \label{sec:cmdpgrad}
 \index{policy gradient algorithm! constrained MDP|(}
This section
 describes how the policy gradient algorithms for MDPs described in \secn \ref{sec:policygrad}-\ref{sec:wdmdp}  can be extended to solve constrained MDPs (CMDPs). Recall that for CMDPs, the optimal policy is randomized.
Assuming a unichain CMDP, our aim is to obtain the optimal parametrized policy:
\begin{align} 
\text{ Compute }  & \th^* = \argmin_\model \Cost(\theta) =  \E_{\beliefm} \{ \cost(\state,\action)\} = \sum_{i \in \statespace} \sum_{\action\in \actionspace} \beliefm(i,\action) \cost(i,\action) 
\label{eq:cobj} \\
\text{ subject to } &   \G_l(\th) = 
 \sum_{i} \sum_{\action } \beliefm(i,\action) \, \con_l(i,\action) \leq \rcon_l, \quad l = 1,2,\ldots, L. \label{eq:conmdp2}
\end{align}
The key difference  compared to the unconstrained MDP are the $L$ constraints (\ref{eq:conmdp2}).
As in the unconstrained MDP case (previous section),
 $\beliefm(i,a)$ is the stationary distribution of the controlled Markov process $z_k= (\state_k,\action_k)$.
  The optimal policy
depends  on
the action probabilities  $\th(\psi^*)$ where
$ \th_{ia}(\psi) =  \prob(u_n = a  | \state_n=i)$ and $\psi$ denotes a suitably chosen parametrization of $\model(\psi)$.

The optimal policy of a CMDP is randomized for up to $L$ states.
 Our  aim is to devise
stochastic approximation algorithms to estimate this randomized policy, i.e.,  optimize
(\ref{eq:cobj}) subject to (\ref{eq:conmdp2}). 
 
\noindent {\bf Assumption (O)}: 
The minima
 $\psi^*$ of (\ref{eq:cobj}), (\ref{eq:conmdp2})
 are regular, i.e., $\nabla_\psi \G_l(\psi^*)$, $l=1,\ldots,L$ are linearly independent.
Then $\psi^*$ belongs to the set of Kuhn Tucker points
$$
 \biggl\{ \psi^* \in \Psi:  \;\exists\, \mu_l \geq 0, \,
l=1,\ldots, L\;\text{ such that
} \quad
  \nabla_\psi C + \nabla_\psi \G \mu = 0, \quad
 \G^{\p} \mu = 0 \biggr\}
$$
where $\mu = (\mu_1\ldots,\mu_L)^{\p}$.
Also,  $\psi^*$ satisfies the second order sufficiency condition
$\nabla_\psi^2 C(\psi^*) + \nabla_\psi^2 \G(\psi^*) \mu > 0$ (positive definite)
on the subspace 
$\{ y \in \reals^L: \nabla_\psi \G_l(\psi^*) y=0\}$ for all $l: \G_l(\psi^*)=0,
\mu_l > 0$.

\begin{algorithm}[h]
\caption{Primal-Dual Reinforcement Learning Algorithm for CMDP} \label{alg:pdreinf}

{\em Parameters}: Cost matrix $(c(i,a))$, constraint matrix $(\con(i,a))$,
batch size $N$,  step size $\epsilon>0$

{\em Step 0. Initialize}: Set $n=0$, initialize $\a(n) $ and 
vector $\lambda(n) \in \reals_+^L$.

{\em Step 1. System Trajectory Observation}: Observe  MDP over batch
$ I_n \ole  \{k \in [nN, (n+1)N-1]\}$ using randomized policy $\th(\a(n))$ of
(\ref{eq:alfadef}) and
compute estimate $\hat{\G}(n)$ of the constraints as
\beq
\hat{\G}_l^{\epsilon}(n+1) =
\hat{\G}_l^{\epsilon}(n) + \sqrt{\epsilon}
\left(  \frac{1}{N} \sum_{k\in I_n} \con_l(Z_k)   - \hat{\G}_l^{\epsilon}(n)\right), \quad l=1,\ldots, L.
\label{eq:spd3}
\eeq

{\em Step 2. Gradient Estimation}: Compute
 $\widehat{\nabla_\psi C}(n)$,  $\widehat{\nabla_\psi \G}(n)$ over the batch
$I_n$ using a gradient estimator (such as weak derivative or score function).

{\em Step 3. Update Policy $\th(\a(n))$}:
Use a penalty function primal dual based stochastic approximation
algorithm  to update $\a$ as follows:
\begin{align}
 \a_{n+1} &= \;
\a_n - \epsilon \biggl(\widehat{\nabla_\psi C}(n) +
\widehat{\nabla_\psi \G}(n) \max \left[0,\lambda_n + \penaltyweight\, \widehat{\G}(n)
\right]\biggr) 
\label{eq:sam1}\\
\lambda_{n+1} &= \max\left[\left(1-\frac{\epsilon}{\penaltyweight}\right) \lambda_n ,\lambda_n + \epsilon \widehat{\G}_n
\right]. \label{eq:sam2} \end{align}
The ``penalization'' $\penaltyweight$ is a suitably large positive constant and
$\max[\cdot,\cdot]$ above is taken element wise.

{\em Step 4}. Set $n=n+1$ and go to Step 1. \index{primal-dual algorithm! policy gradient reinforcement learning}
\end{algorithm}

The policy gradient algorithm  for solving the CMDP is described in Algorithm~\ref{alg:pdreinf}.
Below we elaborate on the primal-dual algorithm (\ref{eq:sam1}), (\ref{eq:sam2}).
The constrained optimization problem (\ref{eq:cobj}), (\ref{eq:conmdp2}) is in general non convex in the parameter $\psi$. One solution methodology is to minimize the augmented Lagrangian via a 
primal-dual stochastic gradient algorithm.  
Such multiplier algorithms
are widely used in deterministic optimization \cite[pg 446]{Ber00}  with extension to
stochastic approximation in \cite{KC78}.
First, convert the inequality MDP constraints (\ref{eq:conmdp2}) to equality constraints
by introducing the variables $z = (z_1,\ldots,z_L) \in \reals^L$, so that
$ \G_l(\a) + z_l^2  = 0$, $l=1,\ldots,L$. 
Define the augmented Lagrangian,
\begin{equation} \La_\penaltyweight(\a,z,\lambda) \ole
 C(\a)+ \sum_{l=1}^L \lambda_l (\G_l(\a) + z_l^2)
 + \frac{\penaltyweight}{2} \sum_{l=1}^L \left(\G_l(\a) +z_l^2 \right)^2 .
\label{eq:optimp} \end{equation}
Here  $\penaltyweight$ denotes a large positive constant.
After some further calculations detailed in \cite[pg.396 and 397]{Ber00},
 the primal-dual algorithm operating on the augmented Lagrangian reads
\begin{align}
 \a^{\epsilon}_{n+1} &=\a^{\epsilon}_n - \epsilon \biggl({\nabla_\psi C}(\a^{\epsilon}_n) +
{\nabla_\psi \G}(\a^{\epsilon}_n) \max\biggl[0,\lambda^{\epsilon}_n + \penaltyweight \, {\G}(\a^{\epsilon}_n)
\biggr]\biggr) \nn \\
\lambda^{\epsilon}_{n+1} &= \max\left[\left(1-\frac{\epsilon}{\penaltyweight}\right)\lambda^{\epsilon}_n,\lambda^{\epsilon}_n + \epsilon {\G}(\a^{\epsilon}_n)
\right]\label{eq:pd1} \end{align}
where $\epsilon>0$ denotes the step size and 
$\max[\cdot,\cdot]$ is taken elementwise.


\begin{lemma} \label{lem:pdl}
Under Assumption (O),
for sufficiently large $\penaltyweight>0$,
there exists $\bar{\epsilon}>0$, such that for all $\epsilon \in (0,\bar{\epsilon}]$,
the sequence $\{\a^{\epsilon}(n),\lambda^{\epsilon}(n)\}$ generated by the primal-dual algorithm (\ref{eq:pd1})
is attracted to a local Kuhn-Tucker pair $(\a^*, \lambda^*)$.
\end{lemma}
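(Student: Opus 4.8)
\noindent\emph{Proof plan.}
The plan is to recognise the deterministic recursion (\ref{eq:pd1}) as a small–step Euler discretisation of a gradient saddle–point flow on the reduced augmented Lagrangian, to show that under Assumption~(O) the Kuhn--Tucker pair $(\psi^*,\lambda^*)$ is a locally asymptotically stable equilibrium of that flow once $\penaltyweight$ is large, and then to transfer this stability to the discrete recursion for $\epsilon$ small. First I would eliminate the slack variables $z$ from (\ref{eq:optimp}). For fixed $(\psi,\lambda)$ the minimisation over $z$ is separable, and minimising $\lambda_l z_l^2+\tfrac{\penaltyweight}{2}(\G_l(\psi)+z_l^2)^2$ over $z_l$ gives $(z_l^*)^2=\max[0,-\G_l(\psi)-\lambda_l/\penaltyweight]$, hence $\lambda_l+\penaltyweight(\G_l(\psi)+(z_l^*)^2)=\max[0,\lambda_l+\penaltyweight\,\G_l(\psi)]$. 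Substituting back produces the Powell--Hestenes--Rockafellar augmented Lagrangian $\bar{\mathcal{L}}_\penaltyweight(\psi,\lambda)=C(\psi)+\tfrac{1}{2\penaltyweight}\sum_{l=1}^{L}\bigl(\max[0,\lambda_l+\penaltyweight\,\G_l(\psi)]^2-\lambda_l^2\bigr)$, for which one checks $\nabla_\psi\bar{\mathcal{L}}_\penaltyweight=\nabla_\psi C+\sum_l\max[0,\lambda_l+\penaltyweight\,\G_l]\nabla_\psi\G_l$ and $\nabla_\lambda\bar{\mathcal{L}}_\penaltyweight=\tfrac{1}{\penaltyweight}(\max[0,\lambda+\penaltyweight\,\G]-\lambda)$. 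Since $\max[(1-\tfrac{\epsilon}{\penaltyweight})\lambda_n,\lambda_n+\epsilon\,\G_n]=\lambda_n+\epsilon\,\nabla_\lambda\bar{\mathcal{L}}_\penaltyweight(\psi_n,\lambda_n)$, the recursion (\ref{eq:pd1}) is exactly the Euler scheme with step $\epsilon$ for the flow $\dot\psi=-\nabla_\psi\bar{\mathcal{L}}_\penaltyweight(\psi,\lambda)$, $\dot\lambda=\nabla_\lambda\bar{\mathcal{L}}_\penaltyweight(\psi,\lambda)$; this is the standard object of the theory of multiplier methods \cite[pp.~396--397]{Ber00}.

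Next I would invoke Assumption~(O). Regularity together with the second–order sufficiency condition forces strict complementarity at $(\psi^*,\lambda^*)$ (with $\lambda^*$ the associated Kuhn--Tucker multiplier vector $\mu$): for each $l$ the $l$-th constraint is either inactive with $\mu_l=0$, or active with $\mu_l>0$. Hence on a neighbourhood of $(\psi^*,\lambda^*)$ the active set is constant, the terms $\max[0,\lambda_l+\penaltyweight\,\G_l(\psi)]$ are smooth, $\bar{\mathcal{L}}_\penaltyweight$ is $C^2$, and its equilibria $\nabla_\psi\bar{\mathcal{L}}_\penaltyweight=0$, $\nabla_\lambda\bar{\mathcal{L}}_\penaltyweight=0$ coincide with the Kuhn--Tucker points of (\ref{eq:cobj})--(\ref{eq:conmdp2}); in particular $(\psi^*,\lambda^*)$ is an equilibrium. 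The classical fact I would then use is that, for $\penaltyweight$ larger than a threshold fixed by the curvature of $C$ and $\G$, the reduced Hessian $\nabla_\psi^2\bar{\mathcal{L}}_\penaltyweight(\psi^*,\lambda^*)=\nabla_\psi^2 C+\sum_l\mu_l\nabla_\psi^2\G_l+\penaltyweight\sum_{l\text{ active}}\nabla_\psi\G_l\nabla_\psi\G_l^{\p}$ is positive definite on all of $\reals^p$ (not merely on the tangent subspace), by a Debreu–type argument using linear independence of the active constraint gradients. Combined with that independence, this makes the Jacobian of the linearised saddle flow at $(\psi^*,\lambda^*)$ Hurwitz --- precisely the eigenvalue computation underlying local convergence of the first–order method of multipliers in \cite{Ber00} --- so $(\psi^*,\lambda^*)$ is a locally asymptotically stable equilibrium of the ODE.

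Finally I would pass from the ODE to the recursion. The flow is generated by a $C^1$ vector field $h$ with $h(\psi^*,\lambda^*)=0$ and $Dh(\psi^*,\lambda^*)$ Hurwitz, so the Euler map $x\mapsto x+\epsilon\,h(x)$ has Jacobian $I+\epsilon\,Dh(\psi^*,\lambda^*)$ at its fixed point, whose spectral radius drops strictly below $1$ once $\epsilon\in(0,\bar{\epsilon}]$ for a suitable $\bar{\epsilon}>0$; hence the map is a local contraction on a small ball around $(\psi^*,\lambda^*)$ and the iterates of (\ref{eq:pd1}) started there are attracted to it. To enlarge the basin to the region where the active–set reasoning still applies, one combines this with a Lyapunov argument, using $\bar{\mathcal{L}}_\penaltyweight(\psi,\lambda^*)-\bar{\mathcal{L}}_\penaltyweight(\psi^*,\lambda^*)$ as a local Lyapunov function for the $\psi$–dynamics, together with the Kushner--Clark ODE method for constrained recursions \cite{KC78}; the detailed estimates are carried out in \cite{Kri16}.

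The step I expect to be the main obstacle is the Hurwitz property of the linearised saddle dynamics: one must show that the penalty term, for $\penaltyweight$ large, turns the intrinsically indefinite saddle structure of the ordinary Lagrangian (a minimum in $\psi$ but a maximum in $\lambda$) into a genuine local attractor with no residual oscillatory modes. This is exactly where the full strength of Assumption~(O) --- regularity plus restricted positive definiteness of the Lagrangian Hessian --- is needed, and it is the technical heart of the convergence analysis of augmented–Lagrangian primal–dual schemes.
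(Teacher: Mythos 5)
Your plan is correct and follows essentially the same route as the paper, whose entire proof is a one-line appeal to Proposition 4.4.2 of \cite{Ber00}: your reduction of (\ref{eq:optimp}) to the Powell--Hestenes--Rockafellar form, the identification of (\ref{eq:pd1}) as an Euler step of the associated saddle flow, the Hurwitz linearization for large $\penaltyweight$, and the small-$\epsilon$ stability transfer are precisely the ingredients behind that cited proposition. The only caveat is that strict complementarity does not actually follow from Assumption (O) as you assert; it is an additional (implicit) hypothesis needed to keep the $\max[0,\lambda_l+\penaltyweight\,\G_l]$ terms smooth near $(\a^*,\lambda^*)$, and it is likewise implicit in the paper's reliance on the cited result.
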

\begin{proof}  The proof 
follows from 
 Proposition 4.4.2 in \cite{Ber00}.
\end{proof}

 \index{policy gradient algorithm! constrained MDP|)}

In \cite{Kri16} we extend the above algorithms to obtain policy gradient algorithms for POMDPs.

\section{\pwe}
The book \cite{SB98} is an influential book on  reinforcement learning. 
\cite{BT96} (though published in the mid 1990s) is a remarkably clear exposition on reinforcement learning algorithms. 
\cite{KY03,BT96} present convergence analyses proofs for several types of reinforcement learning algorithms.
\cite{BY12} has several  novel variations of the Q-learning algorithm for discounted cost MDPs.
\cite{ABB01} has  novel Q-learning type algorithms for average cost MDPs. 
\cite{DG14} uses ant-colony optimization for Q-learning.
 \cite{BB02} is an influential paper that uses the score function
gradient estimator in a  policy gradient algorithm for POMDPs. 

The  area of reinforcement learning is evolving with rapid dynamics. The proceedings
of the  Neural Information Processing Systems (NIPS) conference and International Conference on Machine Learning (ICML)
have numerous recent advances in reinforcement learning algorithms.


\chapter{Stochastic Approximation Algorithms: Examples} 
\label{chp:markovtracksa}

\minitoc
This  chapter,
 presents case studies of stochastic approximation algorithms in state/parameter estimation and modeling in 
the context of POMDPs.
%

Example 1 discusses online estimation of  the parameters of an HMM using the recursive maximum likelihood estimation algorithm.
The motivation stems from classical adaptive control: the parameter estimation algorithm can be used to estimate the parameters of the POMDP
for a fixed policy.

Example 2 shows that for an HMM  comprised of a slow Markov chain, the least mean squares algorithm  can provide satisfactory state estimates
of the Markov chain
without any knowledge of the underlying parameters. In the context of POMDPs, once the state estimates are known, a variety of suboptimal 
algorithms can be used to synthesize  a reasonable policy.

Example 3 shows how discrete stochastic optimization problems can be solved via stochastic approximation algorithms.
In controlled sensing, such algorithms
can be used to compute the optimal sensing strategy from a finite set of policies.

Example 4 shows how large scale Markov chains can be approximated by a system of ordinary differential equations. This mean field analysis is illustrated in the context
of information diffusion in a social network.



\section{Convergence of Stochastic Approximation Algorithms}  \index{stochastic approximation algorithm} \label{sec:primersa}
This section presents a rapid summary of the
convergence analysis of stochastic approximation algorithms.
The books \cite{BMP90,SK95b,KY03} are seminal works that study the convergence of stochastic approximation algorithms under general conditions.

Consider a constant step size stochastic approximation algorithms  of the form
\beq  \theta_{k+1} = \theta_k + \stepsize \,H(\theta_k, \state_k), \quad k=0,1,\ldots  \label{eq:sabasic}\eeq
where $\{\theta_k\}$ is a sequence of parameter estimates generated by the algorithm, $\stepsize$ is small positive fixed  step size,
and $\state_k$ is a discrete time geometrically ergodic Markov process (continuous or discrete state) with transition kernel 
$\tp({\theta_k})$ 
and stationary distribution $\belief_{\theta_k}$.
%
%

Such algorithms are useful for tracking time varying parameters and are widely studied in adaptive filtering.
Because of the constant step size, convergence with probability one of the sequence $\{\theta_k\}$ is ruled out. Instead, under reasonable conditions,  $\{\theta_k\}$
converges weakly (in distribution) as will be formalized below.

Analysis of stochastic approximation algorithms is typically of three types:
\begin{compactenum}
\item Mean Square Error Analysis
\item  Ordinary Differential Equation (ODE) Analysis  \index{ordinary differential equation (ODE) analysis}
\item Diffusion Limit for Tracking Error
\end{compactenum}

The mean square analysis seeks to show that for large time $k$, $\E\|\th_k - \th^* \|^2 = O(\e)$ where $\th^*$ is the true parameter.  \secn \ref{sec:trackingerror}
provides a detailed example.

 In comparison, the ODE analysis and diffusion limit deal with suitable scaled sequences of iterates that are treated
as {\em stochastic processes} rather than random variables. These two analysis methods seek to characterize the behavior of the entire trajectory 
(random process) rather than
just at a specific time $k$ (random variable) that  the mean square error analysis does. The price to pay for the ODE and diffusion limit 
analysis is the highly technical machinery of weak convergence analysis. Below we will provide a heuristic incomplete treatment
that only scratches the surface
of this elegant 
 analysis tool.

\subsection{Weak Convergence}

For constant step size algorithms, under reasonable conditions one can show that  the estimates generated
by the algorithm converge weakly.  For a comprehensive treatment of  weak convergence of Markov processes please see
\cite{EK86}. 

Weak convergence is a function space generalization of convergence in distribution of random variables.\footnote{A sequence of  random variables  $\{X_n\}$  converges in distribution  if
 $ \lim_{n\rightarrow \infty}\cdf_n(x)  =  \cdf(x)$  for all $x$ for which $\cdf$ is continuous. Here $\cdf_n$ and $\cdf$ are the cumulative
 distribution functions of $X_n$ and $X$, respectively.  An equivalent statement is that 
 $\E\{\phi(X_n)\} \rightarrow \E\{\phi(X)\}$ for every
 bounded continuous function $\phi$.}
Consider a  continuous time random process  $X(t), t \in [0,T]$ which we  will denote as $X$.
 A sequence of random processes  $\{X^{(n)}\}$ (indexed by $n=1,2,\ldots$) converges weakly to $X$ if for each bounded continuous real-valued functional~$\phi$,  $$\lim_{n \rightarrow \infty} \E\{ \phi(X^{(n)} )\} =  \E\{ \phi(X)\}.$$ Equivalently, a sequence of probability measures
 $\{P^{(n)}\}$ converges weakly to $P$ if  $\int \phi\,  dP^{(n)} \rightarrow \int \phi \, dP $ as $n \rightarrow \infty$.
Note that the  functional $\phi$ maps the entire trajectory of $X^{(n)}(t), 0 \leq t \leq T$ of the random process to a real number. (The  definition specializes
to the classical convergence in distribution if $X^{(n)}$ is a random variable and $\phi$ is a function mapping $X^{(n)}$ to a real number.)
 
In the above definition, the trajectories of the random processes lie in the function space $C[0,T]$ (the class of continuous functions
 on the interval $[0,T]$), or
more generally,
 $D[0,T]$ which
is the space of piecewise constant functions  that are continuous from the right with limit on the left -  these are called 
 `cadlag' functions (continue \'a droite, limite \'a gauche) in French.


  \index{stochastic approximation algorithm! analysis|(}

\subsection{Ordinary Differential Equation (ODE) Analysis of Stochastic Approximation Algorithms} \index{stochastic approximation algorithm! ODE analysis}     \index{ordinary differential equation (ODE) analysis|(}
Consider a generic stochastic approximation algorithm 
\beq  \theta_{k+1} = \theta_k + \stepsize \,H(\theta_k, \state_k), \quad k=0,1,\ldots,  \label{eq:gensa} \eeq
where $\{\state_k\} $ is a random process
and $\theta_k \in \reals^p$  is the  estimate generated by
the algorithm at time $k = 1,2,\ldots$.

The ODE analysis for stochastic approximation algorithms was pioneered by Ljung (see  \cite{Lju77,LS83}) and subsequently by Kushner and
co-workers \cite{Kus84,KY03}. It aims  to show that the sample path
$\{\th_k\}$ generated by  the stochastic approximation algorithm (\ref{eq:gensa}) behaves asymptotically according to a
 deterministic ordinary differential equation (ODE). 
Let us make this more precise.
 First, we represent the  sequence $\{\th_k\}$ generated by  the
 algorithm (\ref{eq:gensa}) as a  continuous time process since we want to show that its limiting behavior is the continuous time ODE.
 This is done by
constructing the continuous time trajectory via piecewise constant interpolation
of   $\{\th_k\}$  as
\beq  \th^\stepsize(t)= \th_k
\quad \text{ for } t\in [k\stepsize, k\stepsize + \stepsize), \quad  k=0,1,\ldots. \label{eq:interpolate} \eeq
We are interested in studying the limit
 of the continuous time interpolated process  $ \th^\stepsize(t)$ as $\stepsize \rightarrow 0$  over the time interval $[0,T]$.

By using stochastic averaging theory, it can be shown that the following weak convergence result holds; 
see \cite{KY03} for proof.\footnote{For random variables,
convergence in distribution to a constant implies convergence in probability. The generalization to weak convergence is: Weak convergence to a deterministic trajectory (specified in our case by an ODE) implies convergence in probability to this deterministic trajectory. The statement in the theorem is a consequence of this.} 
\begin{theorem}[ODE analysis]   \label{thm:odeweak}
Consider the stochastic approximation algorithm (\ref{eq:gensa}) with constant step size $\epsilon$. Assume
\begin{compactitem}
\item[\rm{\bf (SA1)}] $H(\th,\state)$ is uniformly bounded for all $\th \in \reals^p$ and $\state \in \reals^q$.
\item[\rm{\bf (SA2)}] For any $\ell\ge 0$, there exists $h(\th)$ such that
\beq
    \frac{1}{N} \sum^{N+\ell-1}_{k=\ell}
    \E_l \big\{ H(\th,\state_k) \big\} \to  h(\th)
    \mbox{ in probability as}\ N\to\infty. \label{eq:hdef}
    \end{equation}
where $\E_l$ denotes expectation with respect to the $\sigma$-algebra generated by $\{x_k, k < l\}$.
\item[\rm{\bf (SA3)}] The  ordinary differential equation  (ODE) \beq  \frac{d \theta(t)}{dt}  = h\big(\th(t) \big) , \quad \theta(0) = \th_0.    \label{eq:odesa} \eeq
has a unique solution for every initial condition.
\end{compactitem}
Then the  interpolated estimates $\th^\stepsize(t)$ defined in (\ref{eq:interpolate}) satisfies
$$ \lim_{\stepsize \rightarrow 0} \P\big(\sup_{ 0 \leq t \leq T} |  \th^\stepsize(t) -  \th(t) | \geq \eta \big) = 0 
\quad \text{ for all } T > 0 , \eta > 0 $$
where $\theta(t)$ is the solution of the ODE (\ref{eq:odesa}).
\end{theorem}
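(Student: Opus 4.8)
The plan is to follow the standard weak-convergence recipe of Kushner and Yin: (i) prove tightness of the family $\{\th^\stepsize(\cdot)\}$ in the path space $D[0,T]$; (ii) show that every weak subsequential limit is a (deterministic) solution of the ODE (\ref{eq:odesa}); (iii) use the uniqueness hypothesis (SA3) to conclude that the whole family converges weakly to $\th(\cdot)$, and then upgrade ``weak convergence to a deterministic limit'' into the claimed convergence in probability in sup-norm.

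\textbf{Tightness.} First I would write $\th^\stepsize(t) = \th_0 + \stepsize\sum_{k=0}^{\lfloor t/\stepsize\rfloor-1} H(\th_k,\state_k)$. By the uniform bound (SA1), $\|H\|\le K$ for some constant $K$, so $|\th^\stepsize(t)-\th^\stepsize(s)| \le K|t-s| + K\stepsize$ for $s\le t$. This uniform, in fact deterministic, modulus-of-continuity estimate immediately yields tightness of $\{\th^\stepsize(\cdot)\}$ in $D[0,T]$ and, moreover, shows that any limit process has $K$-Lipschitz paths and hence lives in $C[0,T]$.

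\textbf{Identification of the limit.} Let $\th(\cdot)$ be a weak limit along a subsequence; by the Skorohod representation theorem I may assume $\th^\stepsize\to\th$ uniformly on $[0,T]$ almost surely. Fix $t$ and group the indices $0,\dots,\lfloor t/\stepsize\rfloor-1$ into consecutive blocks $B_0,B_1,\dots$ of length $N_\stepsize$, chosen so that $N_\stepsize\to\infty$ while $\delta_\stepsize:=\stepsize N_\stepsize\to 0$. Then, with $k_m:=\min B_m$,
\[
\th^\stepsize(t)-\th_0 = \sum_m \stepsize\sum_{k\in B_m}\bigl[H(\th_k,\state_k)-H(\th_{k_m},\state_k)\bigr] + \sum_m \delta_\stepsize\Bigl(\tfrac{1}{N_\stepsize}\sum_{k\in B_m} H(\th_{k_m},\state_k)\Bigr).
\]
Inside each block $|\th_k-\th_{k_m}|\le K\delta_\stepsize\to 0$, so the first sum is negligible, provided $H(\cdot,\state)$ is suitably continuous in $\th$ uniformly in $\state$ (equivalently, provided the averaging in (SA2) holds locally uniformly in $\th$). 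For the second sum, the averaging hypothesis (SA2) gives $\tfrac{1}{N_\stepsize}\sum_{k\in B_m}\E_{k_m}\{H(\th,\state_k)\}\to h(\th)$ in probability, so block by block this term is close to $\sum_m\delta_\stepsize\, h(\th_{k_m})$, a Riemann sum converging to $\int_0^t h(\th(r))\,dr$ by continuity of $h\circ\th$. Passing to the limit yields $\th(t)=\th_0+\int_0^t h(\th(r))\,dr$, i.e.\ $\th(\cdot)$ solves (\ref{eq:odesa}).

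\textbf{Conclusion and main obstacle.} By (SA3) the ODE has a unique solution for initial condition $\th_0$, so every subsequential weak limit coincides with the same deterministic trajectory $\th(\cdot)$; hence $\th^\stepsize(\cdot)\Rightarrow\th(\cdot)$. Weak convergence to a deterministic limit is equivalent to convergence in probability, and since the ambient topology is the sup-norm on $C[0,T]$ this is precisely $\lim_{\stepsize\to0}\P\bigl(\sup_{0\le t\le T}|\th^\stepsize(t)-\th(t)|\ge\eta\bigr)=0$. I expect the genuine obstacle to be the ``frozen-$\th$'' averaging step: rigorously showing the first sum above is negligible and that (SA2) may be invoked with $\th$ held fixed at its block value, despite $\th_k$ drifting across each block. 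Doing this cleanly needs either uniform-in-$\state$ continuity of $H(\cdot,\state)$ or a perturbed-test-function argument — exactly the ``technical machinery of weak convergence'' flagged in the text; by contrast the tightness step is routine given (SA1) and uniqueness is assumed outright in (SA3).
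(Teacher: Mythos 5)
Your outline is essentially the proof the paper itself defers to: the text gives no argument of its own but cites Kushner--Yin \cite{KY03}, whose stochastic-averaging proof is exactly your route of (i) tightness from the uniform bound (SA1), (ii) block-averaging identification of the limit via (SA2), and (iii) uniqueness (SA3) plus the fact that weak convergence to a deterministic trajectory yields the stated convergence in probability (the paper's own footnote). Your closing remark is also accurate -- the frozen-$\th$ step does require (SA2) to hold locally uniformly in $\th$ (or continuity of $H(\cdot,\state)$ handled by perturbed test functions), which is precisely the technical machinery the paper waves at rather than reproduces.
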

Numerous variants of Theorem \ref{thm:odeweak} exist with weaker conditions.
Even though the proof of Theorem \ref{thm:odeweak} is highly technical, the theorem gives remarkable insight.
It says that for sufficiently small step size, the entire interpolated trajectory of estimates generated by the stochastic approximation
algorithm is captured by the trajectory of a deterministic  ODE. Put differently, if the ODE is designed to follow
a specific deterministic trajectory,
then the stochastic approximation algorithm will converge weakly to this trajectory.

Assumption (SA1) is a tightness condition and   can be weakened considerably as shown in \index{tightness}
the books \cite{BMP90,KY03}.  More generally, (SA1) can be replaced by uniform integrability of $H(\th_k,\state_k)$, namely that for some $\alpha > 0$,
$$\sup_{k \leq  T/\epsilon} \E \| H(\theta_k,\state_k)\|^{1+\alpha} < \infty. $$

Assumption (SA2) is a weak law of large numbers. It allows us to work with correlated sequences whose remote past and distant future are asymptotically independent.
Examples include
sequences of i.i.d. random variables with  \index{martingale}
bounded variance,
 martingale difference sequences with finite second moments,  moving average processes driven
 by a martingale difference sequence, mixing sequences in which remote past and distant future are asymptotically independent, and
 certain non-stationary sequences such as  functions of  Markov chains.

{\em Example}: 
Suppose $\{\state_k\}$ is a geometrical ergodic Markov process with transition  probability matrix (kernel)  $\tp({\theta_k})$ 
and stationary distribution $\belief_{\theta_k}$. Then by Theorem \ref{thm:odeweak}, the stochastic approximation converges to the ODE
\begin{align}
 \frac{d \theta(t)}{dt}  &= h(\th(t)) \nonumber\\
 \text{ where } h(\th(t) &= \int_\statespace H(\theta(t),\state) \belief_{\theta(t)}(\state) d\state  = \E_{\belief_\theta(t)}\{ H(\theta(t),x)\}.
 \label{eq:odesamarkov}
 \end{align}

 \index{ordinary differential equation (ODE) analysis|)}

\subsection{Diffusion Limit for Tracking Error}  \index{stochastic approximation algorithm! diffusion limit}
The ODE analysis of Theorem \ref{thm:odeweak} says that the interpolated trajectory of estimates generated by the algorithm converges weakly to the trajectory of the ODE (\ref{eq:odesa}) over a time
interval $[0,T]$.  \index{diffusion limit!  stochastic approximation algorithm}
What is the rate of convergence? This is addressed by the diffusion limit analysis that we now describe.

Define the scaled tracking error  as the continuous time process
\beq  \tth^\stepsize(t)  = \frac{1}{\sqrt{\stepsize}} (\th^\stepsize(t) - \th(t)) \label{eq:differror} \eeq
where $\th(t)$ evolves according to the ODE  (\ref{eq:odesa}) and $\th^\stepsize(t)$ is the interpolated trajectory of the 
stochastic approximation algorithm. 

Define  the following continuous time 
diffusion process:
\beq  \tth(t)  = \int_0^t \nabla_\theta h(\th(s))  \,  \tth(s) \,  ds +  \int_0^t \onoisecovnew^{1/2}(\th(s))\, d\bm(s) \label{eq:itointegralcov} \eeq
where $h(\th(t))$ is defined in the ODE (\ref{eq:odesa}), the covariance matrix 
\beq
\onoisecovnew(\th) = \sum_{n=-\infty}^\infty \cov_\th\left(H(\th,\state_n), H(\th, \state_0) \right), \label{eq:onoisecovnew}
\eeq
and $\bm(t)$ is standard Brownian motion. The last term in (\ref{eq:itointegralcov})  is interpreted as a stochastic 
(It\^{o}) integral; see for example, \cite{KS91}.

Then the main result regarding the  limit of the tracking error is the following:

\begin{theorem}[Diffusion Limit of Tracking Error]
As $\stepsize \rightarrow 0$,  the scaled error process $ \tth^\stepsize$ defined in (\ref{eq:differror}) converges  weakly to the diffusion
process $\tth$ defined in (\ref{eq:itointegralcov}).
\end{theorem}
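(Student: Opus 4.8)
The plan is to follow the classical weak-convergence programme (as in \cite{KY03}): derive a linearized recursion for the discrete scaled error, establish tightness of its interpolation in $D[0,T]$, identify every weak limit point as a solution of the martingale problem attached to the linear stochastic differential equation (\ref{eq:itointegralcov}), and conclude by uniqueness of that martingale problem.

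First I would write $\hat{\th}_k = \epsilon^{-1/2}\big(\th_k - \th(k\epsilon)\big)$, where $\th(\cdot)$ solves the ODE (\ref{eq:odesa}), and subtract the two recursions. Using $\th(k\epsilon+\epsilon) - \th(k\epsilon) = \epsilon\, h(\th(k\epsilon)) + O(\epsilon^2)$ together with a first-order Taylor expansion of $h$ (so I would add as a standing hypothesis that $h\in C^2$ with bounded derivatives, which holds under mild smoothness of $H$ and of the kernel), one gets
\[
\hat{\th}_{k+1} = \hat{\th}_k + \epsilon\, \nabla_\th h\big(\th(k\epsilon)\big)\,\hat{\th}_k + \sqrt{\epsilon}\,\big(H(\th_k,\state_k) - h(\th_k)\big) + \sqrt{\epsilon}\, \varrho_k,
\]
with $\varrho_k$ a remainder that is negligible under (SA1). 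This split is the whole point: summing the $\epsilon\,\nabla_\th h$ term over $\lfloor t/\epsilon\rfloor$ steps produces the drift $\int_0^t \nabla_\th h(\th(s))\,\tth(s)\,ds$, while the centered fluctuation $\sqrt{\epsilon}\,(H - h)$, summed over $\Theta(1/\epsilon)$ steps, must be shown to converge to the It\^o term $\int_0^t \onoisecovnew^{1/2}(\th(s))\,d\bm(s)$.

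Tightness of the interpolation $\tth^\epsilon$ of $\{\hat{\th}_k\}$ in $D[0,T]$ would be obtained by the perturbed-test-function (stochastic averaging) criterion. Since $\{\state_k\}$ is geometrically ergodic with kernel $\tp(\th)$, the Poisson equation $(\identity - \tp(\th))\, g(\cdot,\th) = H(\th,\cdot) - h(\th)$ has a bounded solution $g$ with bounded $\th$-gradient, uniformly in $\th$; adding the corrector $\sqrt{\epsilon}\, g(\state_k,\th(k\epsilon))$ to the test function removes the leading Markov fluctuation and yields the uniform moment bound $\sup_{k\le T/\epsilon}\E\|\hat{\th}_k\|^2 < \infty$ --- this is exactly where the mean-square tracking-error analysis of \secn\ref{sec:trackingerror} feeds in --- together with the Kurtz tightness estimate. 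For the limit identification, fix $\phi\in C^2_c(\reals^p)$, compute $\E_l\{\phi(\hat{\th}_{k+1}) - \phi(\hat{\th}_k)\}$, and sum over a block of many $\epsilon$-steps: the $\nabla_\th h(\th(k\epsilon))\hat{\th}_k$ term averages against $\nabla\phi$ into the first-order generator term, and the quadratic contribution $\epsilon\sum \E_l\{(H - h)(H - h)^\top\}$ averages, by ergodicity along the nearly-frozen $\th$, into $\onoisecovnew(\th(s))$ as defined in (\ref{eq:onoisecovnew}) --- because that series is precisely the long-run covariance of $\{H(\th,\state_k)\}$ under $\belief_\th$. Hence every weak limit solves the martingale problem for the generator $\mathcal{A}_t\phi(y) = \big(\nabla_\th h(\th(t))\,y\big)^\top\nabla\phi(y) + \tfrac12\tr\big(\onoisecovnew(\th(t))\,\nabla^2\phi(y)\big)$, i.e. the linear SDE (\ref{eq:itointegralcov}).

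Finally, (\ref{eq:itointegralcov}) is a linear, time-inhomogeneous (Gaussian) SDE whose coefficients are continuous and of at most linear growth along the fixed deterministic curve $\th(\cdot)$, so it has a pathwise-unique solution and a well-posed martingale problem; the tight family therefore has a single limit point, namely $\tth$, and $\tth^\epsilon \Rightarrow \tth$. I expect the genuine obstacle to be the noise term: pushing the Markov-modulated fluctuations $H(\th_k,\state_k) - h(\th_k)$ --- evaluated at the moving parameter $\th_k$ rather than a frozen one --- through a functional central limit theorem and pinning the limiting covariance down to the series (\ref{eq:onoisecovnew}). This requires solvability and $\th$-regularity of the Poisson equation for the parametrized family of kernels (a uniform-in-$\th$ geometric ergodicity assumption) plus control of the error incurred by freezing $\th$ over each averaging block; everything else --- Taylor remainders, tightness bookkeeping, uniqueness for the linear SDE --- is routine once those ingredients are secured.
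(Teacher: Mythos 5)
Your outline is correct and follows exactly the route the paper itself relies on: the theorem is stated in the text without an in-text proof and is deferred to the stochastic-approximation literature (\cite{KY03}, and \cite{YK05b,YKI04} for the analogous LMS result), where the argument is precisely the one you sketch --- linearize the scaled-error recursion about the ODE trajectory, get tightness and moment bounds via perturbed test functions built from the Poisson equation for the geometrically ergodic kernel, identify every limit point through the martingale problem with drift $\nabla_\theta h(\theta(t))$ and diffusion coefficient given by the long-run covariance series (\ref{eq:onoisecovnew}), and conclude by uniqueness of the linear Gaussian SDE (\ref{eq:itointegralcov}). The additional regularity you impose (smoothness of $h$, uniform-in-$\theta$ geometric ergodicity and solvability of the parametrized Poisson equation) is exactly what those references assume, so there is no gap beyond the level of detail expected of a sketch.
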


\subsection{Infinite Horizon Asymptotics and Convergence Rate}   \index{stochastic approximation algorithm! infinite horizon asymptotics}

The ODE and diffusion analyses  outlined above apply to a finite time interval $[0,T]$.
 Often we are interested in the asymptotic behavior as $T\rightarrow \infty$. In particular, we want an  expression for
the asymptotic  rate of convergence  of a stochastic
approximation algorithm. 
In stochastic analysis, the  rate of convergence refers to the asymptotic variance of the normalized errors about the limit point.

Under stability conditions so that 
$\lim_{T \rightarrow \infty}  \lim_{\epsilon \rightarrow 0 }  \th^\stepsize(t) =  \lim_{\epsilon \rightarrow 0 }\lim_{T \rightarrow \infty} \th^\stepsize(t)$,
it can be shown \cite{KY02} that
the stable fixed  points of the ordinary
differential equation (\ref{eq:odesa}) coincide with the attractors of the stochastic approximation algorithm. Suppose that this limit exists and denote
it as $\theta^*$.
Then for large $t$, the diffusion approximation  (\ref{eq:itointegralcov}) becomes the linear (Gaussian) diffusion
\beq  \tth(t)  = \int_0^t \nabla_\theta h(\th)\vert_{\th=\th^*}  \,  \tth(s) \,  ds +  \int_0^t \onoisecovnew^{1/2}(\th^*)\, dw(s). \label{eq:statcov} \eeq
In other words, the scaled error $ \tth^\stepsize(t) $ converges to a Gaussian process -- this can be viewed
as a  functional central limit theorem.

Suppose that the matrix $\nabla_\theta h(\th^*)$ is stable, that is, all its eigenvalues have strictly negative real parts.  Then the
diffusion process  $  \tth(t) $
has a stationary Gaussian distribution, that is for large $t$,
\beq  \tth(t) \sim \normal(0, \kalmancov) \label{eq:gaussapproxsa}\eeq
where the positive definite matrix $\kalmancov$ satisfies the algebraic Lyapunov equation \index{Lyapunov equation! algebraic}
\beq  \nabla_\theta h(\th) \, \kalmancov +\kalmancov \,  \nabla^\p _\theta h(\th) + \onoisecovnew(\th) \big\vert_{\th=\th^*} = 0  .\label{eq:asymptoticrate}\eeq
where $\onoisecovnew$ is defined in (\ref{eq:onoisecovnew}).
\index{stochastic approximation algorithm! asymptotic convergence rate}
The  covariance matrix $\kalmancov$, which is the solution of (\ref{eq:asymptoticrate}), is interpreted as the 
{\em asymptotic rate of convergence} of the stochastic approximation algorithm.  For large $n$
and small step size $\stepsize$, (\ref{eq:gaussapproxsa}) says that the tracking error of the stochastic approximation algorithm (\ref{eq:gensa}) behaves as
$$ \stepsize^{-1/2} (\th_n - \th^*) \sim \normal(0,\kalmancov),$$  \index{stochastic approximation algorithm! asymptotic convergence rate}
where $ \kalmancov$ is the solution of the algebraic Lyapunov equation (\ref{eq:asymptoticrate}).
  \index{stochastic approximation algorithm! analysis|)}

\section{Example 1: Recursive Maximum Likelihood Parameter Estimation of HMMs} \label{sec:rmle} 
\index{recursive maximum likelihood estimation for HMM|(}
\index{HMM parameter estimation! recursive maximum likelihood}

Let $\state_k$, $k=0,1,\ldots$ denote a Markov chain on the state space $\statespace = \{1,2,\ldots,\statedim\}$ where $\statedim$ is fixed
and known.
The HMM parameters $\tp(\th)$ (transition probability matrix) and $\oprob(\th)$ (observation probabilities)
are functions of the parameter
vector $\th$ in a compact subset $\Theta$ of Euclidean space.
Assume that the HMM observations $\obs_k$, $k=1,2,\ldots$ are generated by a true parameter vector 
$\truemodel \in \Theta$ which is not known. The aim is to design a recursive algorithm to estimate the HMM parameter vector $\truemodel$.

{\em Example}: Consider the Markov chain observed in  Markov modulated Gaussian noise via the observation process
$$y_k = \state_k + \sigma(\state_k) \onoise_k $$
where $\onoise_k \sim \normal(0,1)$ is i.i.d.\ and $\sigma(1),\ldots,\sigma(\statedim)$ are fixed positive scalars in the interval $[\underline{\sigma},
\bar{\sigma}]$. 
 One possible parametrization of this Gaussian noise HMM is
$$\th = [\tp_{11},\ldots, \tp_{\statedim\statedim}, \sigma(1),\sigma(2),\ldots, \sigma(\statedim) ]^\p. $$ 
So $\Th = \{\th: \tp_{ij} \geq 0,\;\sum_j \tp_{ij} = 1,\;
\sigma(i) \in [\underline{\sigma},
\bar{\sigma}]\}$.  A more useful parametrization
that automatically encodes these constraints on the transition matrix is to use spherical coordinates or the exponential parametrization discussed 
in \chp  \ref{chp:policyparam}.
\\

The normalized log likelihood  of the HMM parameter $\model$  based on the observations $\obs_{1:n} = (y_{1}, \ldots, y_n)$
is
$ l_n(\th) = \frac{1}{n}
\log p(\obs_{1:n}| \th)$.
It
 can be expressed
as the arithmetic mean of terms involving the observations and the 
HMM prediction filter as follows:
\beq \label{eq:lndef}
 l_n(\th) = \frac{1}{n} \sum_{k=1}^n \log \left[ \one^\p \oprob_{\obs_k}(\th) \belief_{k|k-1}^\th
\right] \end{equation}
where $\oprob_{\obs}(\th) = \diag(\pdf(\obs|\state=1,\th),\ldots, \pdf(\obs|\state=\statedim,\th))$.
Here $ \belief_{k|k-1}^\th$ denotes the HMM predictor assuming the HMM  parameter  is $\th$:
\begin{align*}  \belief_{k|k-1}^\th &= [  \belief_{k|k-1}^\th(1), \ldots,  \belief_{k|k-1}^\th(\statedim) ]^\p, \;
\belief_{k|k-1}^\th(i) = \prob_\th(\state_k = i| \obs_1, \ldots . \obs_{k-1}) .\end{align*}
The HMM predictor is computed via the recursion
\beq \belief^\th_{k+1|k} = \frac{ \tp^\p(\th)\oprob_{\obs_k}(\th) \belief^\th_{k|k-1}}{ \one^\p \oprob_{\obs_k}(\th) \belief_{k|k-1}^\th } .
\label{eq:hmmrpred}
\eeq
Define the incremental score vector as
\beq \score(\belief^\th_{k|k-1},\obs_k,\th) = \nabla_\th \log \left[ \one^\p \oprob_{\obs_k}(\th) \belief_{k|k-1}^\th
\right].  \label{eq:incscore} \eeq

Then the recursive MLE (RMLE) algorithm for online estimation of the HMM parameters is a stochastic approximation algorithm of the form
\beq
 \th_{k+1} = \Pi_\Th\big(\th_k + \epsilon \, \score(\belief^{\th_k}_{k|k-1},\obs_k,\th_k) \big) \label{eq:rmlesa}  \eeq where   $\epsilon$ denotes a positive
 step size,
 and  $\Pi_\Th$ denotes
the projection of the estimate to the set $\Th$. 


\subsection{Computation of Score Vector}
The score vector in the RMLE algorithm (\ref{eq:rmlesa}) can be computed recursively
by
differentiating the terms within the summation in (\ref{eq:lndef}) with 
respect to each element $\th(l)$ of $\theta$, $l=1,\ldots,p$. This yields the $l$-th component of $\score$ as
\beq   
 \score^{(l)}(\belief^{\th}_{k|k-1},\obs_k,\th) =
\frac{\one^\p \oprob_{\obs_k}(\th) \,w_k^{(l)}(\th)} 
{ \one^\p \oprob_{\obs_k}(\th) \belief_{k|k-1}^\th }
+ \frac{ \one^\p [\nabla_{\th(l)}  \oprob_{\obs_k}(\th) ] \belief_{k|k-1}^\th } { \one^\p \oprob_{\obs_k}(\th) \belief_{k|k-1}^\th }
\label{eq:score} \end{equation}
where
%
$w_k^{(l)}(\th) = \nabla_{\th(l)}  \belief_{k|k-1}^{\th}$ denotes the partial
derivative of $\belief^{\th}_{k|k-1}$ in (\ref{eq:hmmrpred}) with respect to the $l$th component of the
 parameter
vector $\th$. 
Define the $\statedim \times p$ matrix $w_k(\th) = ((w^{(1)}_k(\th),
\ldots,w^{(p)}_k(\th))$. Clearly 
 $w_k(\th)$  belongs to $\Xi$ defined by
$\Xi = \{w \in \reals^{\statedim \times p}: {\bf 1}^{\prime} w = \zero\}$ since
$\one^\p \nabla_{\th(l)}  \belief_{k|k-1}^{\th} = \nabla_{\th(l)}\one^\p\belief_{k|k-1}^{\th}  = \nabla_{\th(l)} 1 = 0$.

We need a recursion for evaluating $w_k^{(l)}(\th)$ in (\ref{eq:score}).
Differentiating $\belief^{\th}_{k+1|k}$ with respect to $\th(l)$
 yields
\beq w_{k+1}^{(l)}(\th) = \nabla_{\th(l)}  \belief^{\th}_{k+1|k}= 
R_1(\obs_k,\belief^{\th}_{k|k-1},\th) w_k^{(l)}(\th)
+ R_2^{(l)}(\obs_k,\belief^{\th}_{k|k-1},\th)  \label{eq:wgrad}\end{equation}
where
\begin{align*} R_1(\obs_k,\belief^{\th}_{k|k-1},\th) & =
\tp'(\th) \left[I - \frac{\oprob_{\obs_k}(\th)\,\belief^{\th}_{k|k-1}{\bf 1}^{\prime}}{
{\one}^{\prime}\oprob_{y_k}(\th)\,\belief^\th_{k|k-1}}\right] \frac{\oprob_{\obs_k}(\th)}{
{\one}^{\prime}\oprob_{y_k}(\th)\,\belief^\th_{k|k-1}} \\
 R_2^{(l)}(z_k,\belief^{\th}_{k|k-1}, \th) & =
\tp^{\prime}(\th)\left[I- \frac{\oprob_{\obs_k}(\th)\,\belief^{\th}_{k|k-1}{\bf 1}^{\prime}}{
{\one}^{\prime}\oprob_{y_k}(\th)\,\belief^\th_{k|k-1}}\right]
\frac{\one^\p \nabla_{\th(l)} \oprob_{\obs_k}(\th)\, \belief^{\th}_{k|k-1}}{
{\one}^{\prime}\oprob_{y_k}(\th)\,\belief^\th_{k|k-1}} \\
&\quad\disp + \frac{[ \nabla_{\th(l)} \tp^{\prime}(\th)]
\, \oprob_{\obs_k}(\th)\,\belief^{\th}_{k|k-1}}{
{\one}^{\prime}\oprob_{y_k}(\th)\,\belief^\th_{k|k-1}}. \end{align*}

To summarize (\ref{eq:rmlesa}), (\ref{eq:score}) and (\ref{eq:wgrad}) constitute the RMLE algorithm for online parameter estimation of an HMM.

The ODE analysis of the above algorithm is in \cite{KY02}. Recursive prediction and EM type algorithms were developed in the 1990s  \cite{KM93,CKM94}; see also
\cite{CMR05} for a detailed description.


\section{Example 2: HMM State Estimation via LMS  Algorithm} \label{sec:hmmsa}
 \index{stochastic approximation algorithm! HMM state estimation}
 \index{HMM! state estimation by LMS algorithm}
 \index{LMS algorithm! HMM state estimation|(}
 
 This section discusses how the least mean squares (LMS)  stochastic gradient  algorithm can  be used to estimate the underlying state of a slow Markov chain
 given noisy observations.  Unlike  the HMM
 filter, the LMS algorithm does not require exact  knowledge of the underlying transition matrix or observation probabilities.
  In the context of POMDPs, this implies that for slow Markov chains, the underlying state 
  can be estimated (with provable performance bounds) and then an MDP controller can be run. This approach
lies within the class of 
 MDP based heuristics for solving POMDPs.

\subsection{Formulation} \label{sec:form}
Let $\{y_n\}$ be a sequence of real-valued signals representing the
observations obtained at time $n$, and  $\{\thstate_n\}$ be the
time-varying true parameter, an $\reals^r$-valued random process.
Suppose that
\beq \label {ob} y_n =\ph'_n \thstate_n +\onoise_n, \ n=0,1, \ldots, \eeq
 where $\ph_n \in
\reals^r$ is the regression vector and $\{\onoise_n\} \in \reals$ is a
 zero mean sequence.
Note that (\ref{ob}) is a variant of the usual linear regression
model, in which, a time-varying stochastic
process $\thstate_n$ is in place of a fixed parameter.
We assume that $\thstate_n$ is a slow
discrete-time Markov chain.


\begin{itemize}
\item[(A1)] Suppose that there is a small parameter $\e>0$ and that $\{\thstate_n\}$
is a  Markov chain with states
and transition probability matrix  given by
\beq \label{statespace} \statespace= \{ 1, 2, \ldots, \statedim\},  \quad  \text{ and } \quad 
\tp^\e= I+ \e Q.\eeq
Here $I$ denotes the  $\statedim \times \statedim$
identity matrix and $Q=(Q_{ij})$ is a $\statedim \times \statedim$ 
generator matrix of a continuous-time Markov
chain (i.e., $Q$ satisfies $Q_{ij} \ge 0$ for $i\not =j$ and
$\sum^\statedim_{j=1}Q_{ij}=0$ for each $i=1,\ldots,\statedim$).
For simplicity, assume the initial
distribution $\P(\thstate_0= g_i)= \belief_{0}(i)$
to be independent of $\e$
for each $i=1,\ldots,\thstatedim$.
\end{itemize}

 Note that the small parameter
$\e>0$ in (A1) ensures that the identity matrix $I$ dominates. In
fact, $Q_{ij}\ge 0$ for $i\not =j$ thus the small parameter $\e>0$
ensures the entries of the transition matrix to be positive since
$\tp^\e_{ij}= \delta_{ij} +\e Q_{ij}\ge 0$ for $\e>0$ small
    enough, where $\delta_{ij}=1$ if $i=j$ and is 0 otherwise.
The use of the generator $Q$ makes the row sum of the
    matrix $P$ be one since $\sum_{j=1}^\thstatedim \tp^\e_{ij}= 1 + \e
    \sum^\statedim_{j=1}Q_{ij} =1.$ The essence is that although the true
    parameter is time varying,
    it  is piecewise constant.  In addition,
    the process does not change too frequently due to the dominating
    identity matrix in the transition matrix (\ref{ob}).  It remains
    as a constant most of the time and jumps into another state
    at random instants. Hence the terminology ``slow'' Markov chain.

\paragraph{LMS Algorithm.} 
 \index{LMS algorithm} \index{adaptive filtering}
The LMS algorithm will be used to track the Markov chain  $\{\thstate_n\}$.  Recall from (\ref{eq:lms1}) that the LMS algorithm generates estimates 
$\{\wdh \th_n\}$ according to
\beq \label{alg} \wdh \th_{n+1} =\wdh \th_n + \mu \ph_n(y_n-\ph'_n \wdh \th_n), \quad
\ n=0,1,\ldots, \eeq
where $\mu>0$ is  a small constant step size for the algorithm.
By using (\ref{ob}) with $\wdt \th_n =\wdh \th_n -\thstate_n$, the tracking error satisfies
\beq \label{alg-eq} \wdt \th_{n+1}= \wdt \th_n - \mu \ph_n \ph'_n \wdt \th_n
+ \mu \ph_n \onoise_n + (\thstate_n -\thstate_{n+1}).\eeq 
The aim is to determine
 bounds on the deviation $\wdt \th_n= \wdh \th_n -
\thstate_n$. This goal is accomplished by the following four steps:

\begin{enumerate}
\item Obtain  mean square error bounds for $\E | \wdh \th_n -\thstate_n|^2$.
\item Obtain a limit ODE of centered process.
\item Obtain a weak convergence result of a suitably scaled sequence.

\end{enumerate}

The Markov chain  $\thstate_n$ is called a  {\em hypermodel} in \cite{BMP90}. While
 the dynamics of the hypermodel $\thstate_n$  are used in our
analysis, it does
not enter the  implementation of the
LMS algorithm (\ref{alg}) explicitly.

\paragraph{Assumptions on the Signals.}
Let ${\cal F}_n$ be the $\sigma$-algebra generated
by $\{ (\ph_j,\onoise_j),\ j<n, \thstate_j, \ j \le n\}$, and denote the conditional
expectation with respect to ${\cal F}_n$ by $\E_n$.
We will use the following
conditions on the signals.

\begin{itemize}
\item[(A2)] The signal $\{\ph_n,\onoise_n\}$ is independent of $\{\thstate_n\}$.
Either $\{\ph_n,\onoise_n\}$ is a
 sequence of bounded signals such that
there is a symmetric and positive definite
matrix $B \in \reals^{r\times r}$
 such that $\E\{ \ph_n \ph'_n\} =B$
 \beq \label{ineq-mix-1} {\Big|} \sum^{\infty} _{j=n} \E_n \{\ph_j \ph'_j - B \}
 {\Big|} \le K, \; \; \text{ and } 
{\Big|} \sum^{\infty} _{j=n} \E_n \{\ph_n \onoise_j\}
 {\Big|}\le K , \eeq
 or $\{\ph_n,\onoise_n\}$
 is a  martingale difference 
satisfying $\sup_nE\{|\ph_n|^{4+\Delta}\}<\infty$ and
$\sup_nE\{|\ph_n \onoise_n|^{2+\Delta}\} <\infty$
for some $\Delta>0$.

\end{itemize}

{\em Remark}:
Inequalities (\ref{ineq-mix-1}) are modeled after mixing processes and are in the
almost sure (a.s.) sense with the constant $K$ independent of
$\omega$, the
sample point. [Note, however,
we use the same kind of notation as
in, for example, the mixing inequalities \cite[p.166, Eq.
(20.4)]{Bil68} and
\cite[p.82, Eqs. (6.6) and (6.7)]{Kus84}.]
This allows us
to work with correlated
signals whose  remote past and distant future are asymptotically
independent. To obtain the desired result, the distribution of the
    signal need not be known. The boundedness is a mild restriction,
    for example, one may consider truncated Gaussian processes etc.
    Moreover, dealing with recursive procedures
    in practice, in lieu of
    (\ref{alg}), one often uses a projection or truncation algorithm.
    For instance, one may use \beq \label{proj}\wdh \th_{n+1}= \pi_H[ \wdh
    \th_n + \mu \ph_n (y_n-\ph'_n \wdh \th_n)],\eeq where $\pi_H$ is a
    projection operator and $H$ is a bounded set. When the iterates
    are
    outside $H$, it will be projected back to the constrained set
    $H$. Extensive discussions for such projection algorithms can be
    found in \cite{KY03}.

\subsection{Analysis of Tracking Error} \label{sec:trackingerror}

\subsubsection{Mean Square Error Bounds}
\label{sec:meansq}
This section establishes a mean square error estimate for $\E | \wdh
\th_n -\thstate_n|^2$.  
It is important to note that the mean square error
analysis below holds for
small positive but fixed $\mu$ and $\e$. Indeed, let
$\lambda_\text{min}>0$ denote the smallest eigenvalue
of the symmetric positive definite  matrix $B$ defined in (A2).
Then in the following theorem,
it is sufficient to pick out $\mu$ and $\e$  small enough
so that
\beq \label{eq:mesmall}
 \lambda_\text{min} \mu > O(\mu^2) + O(\e \mu);  \eeq
The phrase ``for sufficiently large $n$'' in what follows means that
there is an $n_0=n_0(\e,\mu)$ such that (\ref{eq-msq}) holds for $n
\ge n_0$. In fact,  (\ref{eq-msq}) holds uniformly for $n\ge n_0$.

\begin{theorem}  \label{MSE}
  Under conditions {\rm (A1)} and {\rm (A2)}, for
  sufficiently large $n$, as $\e\to 0$ and $\mu\to 0$,
  \beq \label{eq-msq} \E | \wdt \th_n |^2 = \E | \wdh
  \th_n -\thstate_n|^2 = O\(\mu + \e/\mu\) \exp( \mu + \e /\mu).\eeq
\end{theorem}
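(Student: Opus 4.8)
The plan is to run a perturbed stochastic Lyapunov function argument of the Kushner--Yin type on the tracking-error recursion (\ref{alg-eq}). Take the natural candidate $V(\wdt\th)=|\wdt\th|^2$, write $\E_n$ for conditional expectation given $\mathcal{F}_n$, and expand $\E_n V(\wdt\th_{n+1})$ along (\ref{alg-eq}). The ``good'' term is $-2\mu\,\wdt\th_n'\,\E_n\{\ph_n\ph_n'\}\,\wdt\th_n$, which under (A2) is dominated by $-2\mu\lambda_{\min}|\wdt\th_n|^2$ up to a correlated correction; the remaining contributions are the quadratic terms $\mu^2|\ph_n\ph_n'\wdt\th_n|^2$ and $\mu^2|\ph_n\onoise_n|^2$, the noise cross term $2\mu\,\wdt\th_n'\ph_n\onoise_n$, and the terms coming from the hypermodel jump $\thstate_n-\thstate_{n+1}$. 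First I would sort these into (i) genuinely $O(\mu^2)(1+|\wdt\th_n|^2)$ terms, (ii) terms involving correlated signals that are not martingale increments, and (iii) jump terms.

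For (ii), since the mixing hypothesis (A2) controls only the tail sums $\sum_{j\ge n}\E_n\{\ph_j\ph_j'-B\}$ and $\sum_{j\ge n}\E_n\{\ph_n\onoise_j\}$ and not the one-step increments, I would introduce two small perturbations $V_1^n,V_2^n$ built from those tail sums contracted against $\wdt\th_n$ (quadratically in the first, linearly in the second), designed so that the bad correlated terms in $\E_nV(\wdt\th_{n+1})-V(\wdt\th_n)$ telescope against $\E_nV_i^{n+1}-V_i^n$. By (\ref{ineq-mix-1}) these perturbations obey $|V_i^n|\le K\mu(1+|\wdt\th_n|^2)$, so the perturbed function $W_n=V(\wdt\th_n)+V_1^n+V_2^n$ is equivalent to $|\wdt\th_n|^2$ up to a factor $1+O(\mu)$. (In the martingale-difference alternative of (A2) the cross terms vanish in conditional mean, no noise perturbation is needed, and this part is a simpler parallel case.) For (iii), using (A1) and $\tp^\e=I+\e Q$ with $Q$ bounded on the finite state space, I would show $\E_n\{\thstate_{n+1}-\thstate_n\}=O(\e)$ and $\E_n|\thstate_{n+1}-\thstate_n|^2=O(\e)$, then handle the cross term $2\wdt\th_n'(\thstate_n-\thstate_{n+1})$ by Young's inequality, $2|\wdt\th_n|\,O(\e)\le \frac{\mu\lambda_{\min}}{2}|\wdt\th_n|^2+O(\e^2/\mu)$, and absorb it.

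Assembling everything, and invoking the smallness condition (\ref{eq:mesmall}) $\lambda_{\min}\mu>O(\mu^2)+O(\e\mu)$ precisely to keep the net drift coefficient of $W_n$ strictly negative, I would reach a one-step inequality $\E_nW_{n+1}\le(1-c\mu)W_n+O(\mu^2)+O(\e)$ for some $c>0$ and all sufficiently large $n$. Iterating this (a discrete Gronwall / Bellman inequality) and tracking the residual multiplicative $1+O(\mu^2+\e\mu)$ factors produced at each step by the perturbation terms gives $\E W_n\le e^{-c\mu n}W_0+C(\mu+\e/\mu)\exp(\mu+\e/\mu)$, where morally the $\mu$ is $O(\mu^2)/(c\mu)$, the $\e/\mu$ is $O(\e)/(c\mu)$, and the exponential is the envelope absorbing the residual factors. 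For $n\ge n_0(\e,\mu)$ the transient $e^{-c\mu n}W_0$ is negligible, and since $W_n$ and $|\wdt\th_n|^2=|\wdh\th_n-\thstate_n|^2$ agree up to $1+O(\mu)$, the bound (\ref{eq-msq}) follows uniformly in $n\ge n_0$.

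The main obstacle is step (ii): choosing the perturbations $V_1^n,V_2^n$ so that they simultaneously cancel the correlated non-martingale terms, stay $O(\mu)$ relative to $V(\wdt\th_n)$ so that $W_n$ is still a bona fide Lyapunov function, and generate only controllable new error terms when $\E_nV_i^{n+1}-V_i^n$ is computed (that computation itself spawns further correlated sums that must again be bounded through (\ref{ineq-mix-1})). The other delicate point is the constant bookkeeping, so that (\ref{eq:mesmall}) is exactly the inequality needed for negativity of the drift coefficient of the perturbed function.
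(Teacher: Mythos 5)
Your plan is essentially the paper's own argument: the proof of Theorem \ref{MSE} is carried out via exactly this kind of perturbed Lyapunov function analysis (quadratic Lyapunov function on the error recursion (\ref{alg-eq}), small perturbations built from the mixing tail sums in (A2) to absorb the correlated terms, $O(\e)$ treatment of the hypermodel jumps under (A1), and iteration of the resulting one-step drift inequality under condition (\ref{eq:mesmall})). The decomposition, the role of each assumption, and the bookkeeping leading to the $O(\mu+\e/\mu)\exp(\mu+\e/\mu)$ bound all match the paper's approach, so no further comparison is needed.
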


The theorem is proved in the appendix to this chapter.

In view of Theorem \ref{MSE}, it is clear that in order
for the adaptive algorithm to track the time-varying parameter, due to the presence of the term $\exp(\e/\mu)$, we need to have at least $\e /\mu =O(1)$.
Thus, the ratio $ \e/\mu$ must not be large.
A glance of the order of magnitude estimate
$O(\mu + \e /\mu)$, to balance the two terms $\mu$ and $\e /\mu$, we need to
choose $\e = O(\mu^2)$. Therefore, we arrive at the following corollary.

\begin{corollary}  \label{MSE-cor}
Under the conditions of Theorem \ref{MSE}, if $\e =O(\mu^2)$,
then for sufficiently large $n$,
$ \E | \th_n|^2 = O(\mu).$
\end{corollary}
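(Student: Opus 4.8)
The plan is to derive Corollary~\ref{MSE-cor} as an immediate consequence of Theorem~\ref{MSE} by substituting the scaling $\e = O(\mu^2)$ and checking that the exponential correction factor remains bounded.

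First I would start from the conclusion of Theorem~\ref{MSE}: for $n$ sufficiently large (uniformly for $n \ge n_0(\e,\mu)$),
\[
\E|\tth_n|^2 = O\!\left(\mu + \e/\mu\right)\exp\!\left(\mu + \e/\mu\right),
\]
with implied constants independent of $\mu$ and $\e$. Under the hypothesis $\e = O(\mu^2)$ there is a constant $c>0$ with $\e \le c\mu^2$ for all small $\mu>0$, so that $\e/\mu \le c\mu$ and hence $\mu + \e/\mu \le (1+c)\mu$. This already disposes of the polynomial prefactor, giving $O(\mu + \e/\mu) = O(\mu)$.

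Next I would control the exponential factor: since $\mu + \e/\mu \le (1+c)\mu \to 0$ as $\mu\to 0$, the factor $\exp(\mu + \e/\mu)$ is at most $\exp((1+c)\mu)$, which is bounded above by a fixed constant (say $2$) once $\mu$ is small enough. Combining the two bounds in the displayed estimate of Theorem~\ref{MSE} then yields $\E|\tth_n|^2 = O(\mu)$ for $n \ge n_0(\e,\mu)$, which is precisely the assertion of the corollary.

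I do not expect any genuine obstacle, since the argument is essentially a one-line substitution. The only point that deserves a moment's attention is the uniformity of the $O(\cdot)$ constants in Theorem~\ref{MSE}: one must use that these constants (not merely the threshold $n_0$) are independent of $\mu$ and $\e$, so that replacing $\e/\mu$ by $O(\mu)$ does not smuggle in a constant that degenerates as $\mu\to 0$. As Theorem~\ref{MSE} is stated with exactly this uniformity, the corollary follows directly.
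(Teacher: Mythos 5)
Your proposal is correct and is essentially the paper's own argument: the corollary is obtained by substituting $\e = O(\mu^2)$ into the bound of Theorem~\ref{MSE}, so that $\e/\mu = O(\mu)$, the prefactor becomes $O(\mu)$, and the exponential factor $\exp(\mu + \e/\mu)$ stays bounded as $\mu \to 0$. Your added remark about the uniformity of the implied constants is exactly the point that makes this one-line substitution legitimate.
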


\subsubsection{Mean ODE and Diffusion Approximation} \label{sec:ode}
  Due to the form of the transition
matrix given by (\ref{statespace}), the underlying Markov chain
belongs to the
category of two-time-scale
Markov chains. For some of the recent
work on this subject, we refer the reader to \cite{YZ06} and
the references therein.
It is assumed that  $\e =O(\mu^2)$ (see
Corollary \ref{MSE-cor}), i.e., the adaptation speed of the
LMS algorithm (\ref{alg}) is faster than the Markov chain
dynamics. 

Recall that the mean square error analysis in
\secn \ref{sec:meansq} deals with the mean square behavior of the
random variable $ \wdt \th_n =\wdh \th_n - \thstate_n$ as $n\rightarrow
\infty$, for small but fixed $\mu$ and $\e$.  In contrast, the
mean ODE and diffusion approximation analysis of this section deal
with how the entire discrete-time trajectory (stochastic process)
$\{\wdt \th_n: n=0,1,2\ldots,\}$ converges (weakly) to a the
limiting continuous-time process (on a suitable function space) as
$\mu \rightarrow 0$ on a time scale $O(1/\mu)$.  
Since the
underlying true parameter $\thstate_n$ evolves according to a Markov
chain (unlike standard stochastic approximation proofs where the
parameter is assumed constant), the proofs of the ODE and
diffusion limit  are non-standard and
require use of the so called ``martingale problem'' formulation
for stochastic diffusions; please see \cite{YK05b,YKI04} for details.

\subsubsection{Mean Ordinary Differential Equation (ODE)} \label{sec:sode}
Define 
$\wdt \th_n =\wdh \th_n
- \thstate_n$ and the interpolated process
\beq \label{vmu}\wdt \th^\mu(t)= \wdt \th_n
\quad \text{ for } t\in [n\mu, n\mu + \mu). \eeq
We will need another condition.

\begin{itemize}
\item[(A3)]
 As $n\to \infty$,
\begin{align*} \ad {1\over n}
\sum^{n_1+n}_{j=n_1} \E_{n_1} \ph_j \onoise_j
\to 0, \ \hbox{ in probability}, \quad 
\ad {1\over n} \sum^{n_1+n}_{j=n_1} \E_{n_1} \ph_j \ph'_j
\to B, \ \hbox{ in probability}. \end{align*}
\end{itemize}

\begin{theorem}    \label{ode-lim}
Under {\rm (A1)}--{\rm (A3)} and assuming
that $\wdt \th_0 = \wdt \th^\mu_0$
converges weakly to $\wdt\th^0$, then
$\wdt \th^\mu\cd$ defined in {\rm(\ref{vmu})}
converges weakly to $\wdt \th \cd$, which is a solution of the
ODE
\beq \label{ode-lim-eq} \frac{d}{dt} \wdt \th (t)=- B \wdt \th(t), \ t \ge
0, \  \wdt\th(0) = \wdt \th^0 .\eeq
\end{theorem}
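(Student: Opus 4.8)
The plan is to recognize that the tracking error recursion (\ref{alg-eq}) is a constant-step-size stochastic approximation algorithm of the general form (\ref{eq:gensa}), and then invoke the ODE analysis of Theorem \ref{thm:odeweak}. First I would rewrite (\ref{alg-eq}) in the canonical form $\wdt\th_{n+1} = \wdt\th_n + \mu\, H_n$ by absorbing the jump term $\thstate_n - \thstate_{n+1}$ into the noise: writing $H_n = -\ph_n\ph'_n \wdt\th_n + \ph_n\onoise_n + \mu^{-1}(\thstate_n - \thstate_{n+1})$. The delicate point here is the scaling: because $\thstate_n$ is a slow Markov chain with transition matrix $I + \e Q$ and $\e = O(\mu^2)$ (Corollary \ref{MSE-cor}), the increment $\thstate_n - \thstate_{n+1}$ is nonzero only with probability $O(\e) = O(\mu^2)$ per step, so $\mu^{-1}\E|\thstate_n - \thstate_{n+1}| = O(\mu) \to 0$. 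Thus on the $O(1/\mu)$ time scale this term contributes nothing to the mean dynamics — this is exactly the content of the Lemma on $\E|\sum (\thstate_k - \thstate_{k+1})| = O(\mu)$ in the commented-out material, which I would either cite or reprove in one line.

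Next I would verify the hypotheses (SA1)--(SA3) of Theorem \ref{thm:odeweak} in our setting. For (SA1)/uniform integrability: under (A2) the signals $\{\ph_n,\onoise_n\}$ are either bounded or have enough moments ($4+\Delta$ and $2+\Delta$), and combined with the mean square bound of Theorem \ref{MSE} (which gives $\E|\wdt\th_n|^2 = O(\mu + \e/\mu) = O(\mu)$ bounded), the term $H(\wdt\th_n,\cdot)$ satisfies the required integrability on $[0,T/\mu]$. For (SA2), the averaging condition: I need $\frac1N\sum_{k=\ell}^{N+\ell-1}\E_\ell\{H_k\} \to h(\wdt\th)$ in probability. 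Using (A3) — which states precisely that $\frac1n\sum \E_{n_1}\ph_j\onoise_j \to 0$ and $\frac1n\sum\E_{n_1}\ph_j\ph'_j \to B$ — together with the independence of $\{\ph_n,\onoise_n\}$ from $\{\thstate_n\}$ in (A2) and the $O(\mu)$ bound on the jump contribution, I get $h(\wdt\th) = -B\wdt\th$. For (SA3): the ODE $\dot{\wdt\th} = -B\wdt\th$ is linear with constant coefficients, so it trivially has a unique solution for every initial condition. Then Theorem \ref{thm:odeweak} yields $\sup_{0\le t\le T}|\wdt\th^\mu(t) - \wdt\th(t)| \to 0$ in probability, which upgrades to the claimed weak convergence of $\wdt\th^\mu(\cdot)$ to the solution of (\ref{ode-lim-eq}) given the weak convergence of the initial condition $\wdt\th^\mu_0 \Rightarrow \wdt\th^0$.

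The main obstacle — and the reason the paper says the proof is ``non-standard'' — is that the standard ODE theorems assume the true parameter is \emph{fixed}, whereas here $\thstate_n$ is itself a stochastic process (a slow Markov chain), so $\wdt\th_n = \wdh\th_n - \thstate_n$ is a difference of two moving quantities. The clean way to handle this rigorously is the martingale-problem formulation: one shows the interpolated process $\wdt\th^\mu(\cdot)$ is tight in $D[0,T]$ (using the mean-square bounds of Theorem \ref{MSE} and a standard perturbed-Liapunov or direct-averaging tightness argument), then identifies every weak-limit point as a solution of the martingale problem associated with the generator $\mathcal{L}f(\xi) = \nabla f(\xi)\cdot(-B\xi)$, and finally invokes uniqueness of that martingale problem (immediate for the linear ODE). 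The slow-Markov-chain increments enter only through the $O(\mu)$ error term, which vanishes in the limit; the two-time-scale structure guarantees $\thstate_n$ is essentially frozen over the averaging windows. I would structure the write-up as: (i) reduce to canonical SA form; (ii) dispose of the jump term via the $O(\mu)$ estimate; (iii) verify (A2)--(A3) imply the averaging condition with drift $-B\wdt\th$; (iv) apply the ODE/martingale-problem machinery, citing \cite{KY03,YK05b,YKI04} for the technical tightness and uniqueness steps.
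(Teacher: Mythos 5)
Your proposal is correct and follows essentially the same route the paper takes (and defers to \cite{YK05b,YKI04} for): reduce (\ref{alg-eq}) to a perturbed stochastic approximation recursion, show the hypermodel jump term $\thstate_n-\thstate_{n+1}$ contributes only $O(\e/\mu)=O(\mu)$ over the $O(1/\mu)$ horizon since $\e=O(\mu^2)$, average the remaining terms via (A2)--(A3) to get the drift $-B\wdt\th$, and establish the weak limit through tightness plus the martingale-problem formulation. No gaps of substance; this matches the paper's argument.
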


 This theorem provides us with the evolution of the tracking
errors. It shows that
$ \wdh\th_n - \thstate_n$ evolves dynamically so that its trajectories
follows a deterministic ODE. Since the ODE is asymptotically stable,
the errors decay exponentially fast to 0 as time grows.

\subsection{Closing Remark. Tracking Fast Markov Chains} \label{sec:fastmc} 

 What happens if $\epsilon = O(\mu)$ so that the Markov chain evolves on the same time scale as
the LMS  algorithm? The analysis is more complex \cite{YKI04} and the main results are as follows:
\begin{compactitem}
\item The mean square tracking error is (compare with Theorem \ref{MSE-cor})
 \beq  \E | \wdt \th_n |^2 = \E | \wdh
  \th_n -\thstate_n|^2 = O(\mu ).\eeq
\item The ODE  becomes a Markov modulated ODE (compare with Theorem \ref{ode-lim})
$$
\frac{d}{dt} \wdt \th (t)=- B \wdt \th(t) + Q \state(t) , \quad \ t \ge
0, \  \wdt\th(0) = \wdt \th^0 .
$$
where $x(t)$ is the interpolated Markov process (constructed as in (\ref{vmu})) with transition rate matrix $Q$.
So stochastic averaging no longer results in a deterministic ODE. Instead the  limiting behavior is  specified by a differential equation driven by a continuous-time Markov chain with
transition rate matrix $Q$, see \cite{YKI04,YIK09}.
\end{compactitem}
 \index{LMS algorithm! HMM state estimation|)}

\renewcommand{\f}{\psi}
\def\fhat{\hat{\f}}

\section{Example 3: Discrete Stochastic Optimization for  Policy Search}  \label{sec:dopt} \index{discrete stochastic optimization|(}

In this section we consider 
a POMDP modeled as a back box: a decision maker can choose one of  a finite number of policies and can measure the noisy costs incurred over
a horizon. 
The aim is to
solve the following time-varying discrete stochastic optimization problem:
\beq \text{ Compute } \th^* =  \argmin_{\theta \in \Theta}  C(\th) , \quad \text{ where }
C(\th) = \E \{ \cost_n(\th) \}  .
  \label{eq:discobj}
\eeq
Here
\begin{compactitem}
\item $\Theta = \{1,2,\ldots, S\}$  denotes the finite discrete space of possible policies.
 \item $\theta$ is a  parameter that specifies a  policy - for example it could specify how long and in which order to use specific sensors
 (sensing modes) in a controlled sensing problem.
\item $c_n(\theta)$ is the  observed cost incurred when using
strategy $\theta$ for sensing. 
Typically, this cost is evaluated by running the POMDP over a specified horizon. So $n$ can be viewed as an index for the $n$-th run of the POMDP. 

\item $\th^* \in \globalopt$ where $\globalopt \subset \Theta$ denotes the  set of global minimizers of (\ref{eq:discobj}).
\end{compactitem}

It is assumed that  the probability distribution of the noisy cost $\cost_n$  is not known. 
Therefore, 
$C(\th)$ in (\ref{eq:discobj}) cannot be evaluated at each time $n$; otherwise  the problem is a deterministic integer optimization problem.  
We can only observe noisy samples  $\cost_n(\th)$ of the system performance $C(\th)$ via  simulation for any choice of $\th \in \Theta$.
Given this noisy performance   $\{\cost_n(\th)\}$, $n=1,2,\ldots,$ the aim is to estimate $\th^*$.
An obvious  brute force method  for solving  (\ref{eq:discobj}) involves an exhaustive enumeration: For each $\th \in \Theta$, compute  the
empirical average
$$ \wdh{c}_N(\th) = \frac{1}{N} \sum_{n=1}^N c_n(\th),$$
via simulation
for large $N$.
Then pick
$\wdh{\th} = \argmin_{\th \in \Theta}
\wdh{c}_N(\th)$.
Since for any fixed $\th\in \Theta$,
$\{c_n(\th)\}$ is an  i.i.d.\
sequence of  random variables, by virtue of
Kolmogorov's  strong law of large numbers,
$\wdh{c}_N(\th) \rightarrow \E \{c_n(\th)\}$ w.p.1,
 as $N\rightarrow \infty$.
This and the finiteness of $\Theta$ imply that 
\beq
\argmin_{\th \in \Theta}
\wdh{c}_N(\th) \rightarrow
\argmin_{\th \in \Theta}
\E\left\{c_n(\th) \right\}  \text{ w.p.1} \text{ as }  N\rightarrow \infty . \label{eq:empirical}\eeq
In principle,
the above  brute force simulation method can  solve the
discrete stochastic optimization
problem (\ref{eq:discobj}) and the estimate is {\em consistent}, i.e.,
(\ref{eq:empirical}) holds. However, the method is highly inefficient
since  $\wdh{c}_N(\th) $ needs to be evaluated for each $\th\in\Theta$.
The evaluations of $\wdh{c}_N(\th) $ for $\th \not\in \globalopt$
are  wasted because they contribute nothing to  the estimation
of $\wdh{c}_N(\th) $, $\th \in \globalopt$.

The idea of discrete stochastic approximation  is to design an
algorithm that is both {\em consistent} and {\em attracted} to the
minimum.
That is, the algorithm should spend more time
obtaining observations $c_n(\th)$ in areas of the state
space near $\globalopt$ 
 and less in
other areas.
%

 \index{discrete stochastic optimization! multi-armed bandit}
The discrete stochastic optimization  problem (\ref{eq:discobj})
is similar in spirit to the  stochastic bandit problem  with the key
difference that (\ref{eq:discobj}) aims to determine the minimizer $\th^*$  while bandits aim to minimize an average observed function over a period of time (called the regret).

In the remainder of this section we provide two examples of  discrete stochastic optimization algorithms:
\begin{compactitem}
\item The smooth best-response adaptive search algorithm.
\item Discrete stochastic search algorithm.
\end{compactitem}
A numerical example is then given to illustrate the performance of these algorithms. Also the performance is compared with
 the 
{\em upper confidence bound (UCB) algorithm}  which is a popular multi-armed bandit algorithm.

We are interested in situations where the objective function $C(\th)$ in (\ref{eq:discobj})  evolves randomly over a slow time scale and hence
the global minimizers  evolve slowly with time. So the  stochastic optimization
algorithms described below have constant step sizes to facilitate tracking time varying global minimizers.
\index{random search algorithms}

\subsection{Algorithm 1. Smooth Best-Response Adaptive Search}

\index{discrete stochastic optimization! best-response search}
Algorithm \ref{alg:adaptive-search} describes the smooth best-response adaptive search algorithm for solving the discrete stochastic
optimization problem (\ref{eq:discobj}).  In this algorithm, $\gamma \in (0,\maxdiff]$ denotes the exploration weight where  $\maxdiff \geq  \max_\th C(\th) - \min_\th C(\th)$ upper bounds
the maximum difference in objective function  among the feasible solutions. \index{best response adaptive search}

\begin{algorithm}
\caption{Smooth Best-Response Adaptive Search}

\noindent \textbf{\textit{Step 0.} Initialization.} 
Set  exploration parameter $\gamma \in (0,\maxdiff]$. 
Initialize $\f_{_0} = \mathbf{0}_{S}.$

\textbf{\textit{Step 1.} Sampling.} Sample $\th_n\in \Theta$ according to $S$-dimensional probability vector
$$\br(\f_n)  
 =\lb b^\gamma_1(\f_n),\ldots,b^\gamma_S(\f_n)\rb, \quad \text{ where } \bbr_i\big(\f\big) =
 \frac{\exp\big(- \psi_i/\gamma\big)}{\sum_{j\in\Theta}\exp\big(- \psi_j/\gamma\big)}. $$


\textbf{\textit{Step 2.} Evaluation.} Perform simulation to obtain $\cost_n(\th_n)$.

\textbf{\textit{Step 3.} Belief Update.} Update $\f_{n+1} \in \reals^S$ as
\begin{align} \label{eq:regret-update}
\f_{n+1} &= \f_n + \mu\left[\boldf\big(\th_n,\f_n,\cost_n(\th_n)\big) - \f_n\right],
\\ \text{ where }\;
 \boldf &= (f_{_{1}},\ldots,f_{_{S}})^\p, \quad
f_{_{i}}\big(\th_n,\f_n,\cost_n(\th_n)\big) = \frac{\cost_n(\th_n)}{\bbr_i\big(\f_n\big)}  I(\th_n = i).  \nn
\end{align}

\textbf{\textit{Step 4.} Recursion.} Set $n\leftarrow n+1$ and go to Step 1.
\label{alg:adaptive-search}
\end{algorithm}

The $S$-dimensional probability vector $\br(\f_n)  $ in Step 1 of Algorithm  \ref{alg:adaptive-search} is actually 
a special case of the   following
general framework: Let $\simplex$ denote the unit $S$-dimensional simplex, and $\text{int}(\simplex)$ denote the interior of this simplex.
Given the
vector $\f = [\psi_{1},\ldots,\psi_S]^\p\in\reals^S$ of beliefs about objective values at different candidate solutions, the \emph{smooth best-response sampling} strategy $\br \in \simplex$  is 
\begin{equation}
\label{eq:smooth-br}
\br\big(\f\big) := \operatorname*{argmin}_{\ {\sigma} \in \simplex } \textstyle \left(  \sigma^\p \f - \gamma \rho( {\sigma})\right),\;\; 0<\gamma<\maxdiff,
\end{equation}
where $\sigma  \in \simplex$  and the perturbation function $\rho( {\sigma}):\textmd{int}(\simplex )\to \reals$ satisfy
\begin{itemize}
    \item[i)] $\rho\cd$ is  continuously differentiable, strictly concave, and $|\rho|\leq 1$;
    \item[ii)] $\|\nabla\rho( {\sigma})\|\to\infty$ as $ {\sigma}$ approaches the boundary of $\simplex$  
    where $\|\cdot\|$ denotes the Euclidean norm. 
\end{itemize}
In Algorithm \ref{alg:adaptive-search}, we chose  $\rho\left(\cdot\right)$ as the \emph{entropy function}~\cite{FL98}
$
\rho\left( {\sigma}\right) = -\sum_{i\in\Theta} \sigma_i \log \left(\sigma_i\right)
$. Then applying (\ref{eq:smooth-br}) yields Step 1 which can be viewed as 
a  Boltzmann exploration strategy with constant temperature
\begin{equation}
\label{eq:boltzmann}
\bbr_i\big(\f\big) = \frac{\exp\big(- \psi_i/\gamma\big)}{\sum_{j\in\Theta}\exp\big(- \psi_j/\gamma\big)}.
\end{equation}
Such an exploration strategy is  used widely in the context of game-theoretic learning and is called logistic fictitious-play~\cite{FL95} or logit choice function~\cite{HS02}.

 \cite{NKY14} shows that 
that the sequence $\{\th_n\}$ generated by Algorithm \ref{alg:adaptive-search} spends more time in the set of global
maximizers $\globalopt$ than any other point in $\Theta$.

\subsection{Algorithm 2. Discrete Stochastic  Search}
\index{discrete stochastic optimization! random search}
The second discrete stochastic optimization algorithm that we discuss is displayed in
Algorithm  \ref{alg:RS}. This random search algorithm was  proposed by
Andradottir \cite{And96,And99}  for computing
the global minimizer in (\ref{eq:discobj}).   Recall $\globalopt$ denotes the set of global minimizers of (\ref{eq:discobj}) and $\Theta = \{1,2,\ldots,S\}$ is the search space. The following assumption is required:

\begin{itemize}
\item[{(O)}]
 For each $\th , \tilde{\th} \in \Theta- \globalopt$ and $\th^* \in \globalopt$,
\begin{align*}
\prob(\cost_n(\th^*)  < \cost_n(\th) ) & \geq \prob(\cost_n(\th) > \cost_n(\th^*)) \\
   \prob(\cost_n(\tilde{\th})  > \cost_n(\th^*) ) & \geq \prob(\cost_n(\tilde{\th}) > \cost_n(\th))
\end{align*}
\end{itemize}

Assumption (O) ensures that the algorithm is more likely
to jump towards a global minimum than away from it. 
Suppose
 $c_n(\th) = \th + w_n(\th)$  in
(\ref{eq:discobj}) for each $\th \in \Theta$, where
$\{w_n(\th)\}$ has a symmetric probability density function or probability mass function  with zero mean.
Then 
 assumption (O) holds.

\begin{algorithm}
\textbf{\textit{Step 0.} Initialization.} Select $\th_0\in\Theta$. Set $\pi_{i,0} = 1$ if $i = \th_0$, and $0$ otherwise. \\ Set $\th^\ast_0 = \th_0$ and $n=1$.

\textbf{\textit{Step 1.} Sampling.} Sample a candidate solution $\tilde{\th}_n$ uniformly from the set $\Theta-\th_{n-1}$.

\textbf{\textit{Step 2.} Evaluation.} 
Simulate samples $\cost_n(\th_{n-1})$ and $\cost_n(\tilde{\th}_n)$.
  \\ If  $\cost_n(\tilde{\th}_n) < \cost_n(\th_{n-1})$, set $
 \th_{n} = \tilde{\th}_n$, else,
set $\th_{n} = \th_{n-1}$.


\textbf{\textit{Step 3.} Belief (Occupation Probability) Update.}
\begin{equation}
\label{eq:RS}
 {\pi}_n =  {\pi}_{n-1} + \mu\lb e_{\th_n} -  {\pi}_{n-1}\rb,\quad 0<\mu<1,
\end{equation}
where $e_{\th_n}$ is the unit vector with the $\th_n$-th element being equal to one.

\textbf{\textit{Step 4.} Global Optimum Estimate.} $\th^\ast_n \in \argmax_{i\in\Theta} \pi_{i,n}$.\\
\textbf{\textit{Step 5.} Recursion.} Set $n \leftarrow n + 1$ and go to Step 1.

\caption{Random Search (RS) Algorithm}
\label{alg:RS}
\end{algorithm}

 Algorithm \ref{alg:RS} is attracted to the global minimizer set $\globalopt$ in the sense that it spends more time
in $\globalopt$ than at any other candidate in $\Theta - \globalopt$. This was proved in in \cite{And96}; see also \cite{KWY04,YKI04} for tracking time varying optima.

\subsection{Algorithm 3. Upper Confidence  Bound  (UCB) Algorithm} \index{upper confidence bound (UCB) algorithm}
\index{discrete stochastic optimization! upper confidence bound algorithm}
\index{multi-armed bandit! UCB algorithm}
The final algorithm that we discuss is the UCB algorithm.
UCB  \cite{ABF02} belongs to  the family of ``follow the perturbed leader'' algorithms and is widely used in the context of multi-armed bandits.
Since bandits are typically formulated in terms of maximization (rather than minimization), we consider here {\em maximization}
of $C(\th)$ in (\ref{eq:discobj}) and denote  the global maximizers as $\th^*$.
 Let $B$ denote an upper bound on the objective function and $\xi > 0$ be a constant. The UCB algorithm is summarized  in Algorithm~\ref{alg:UCB}. For a static discrete stochastic optimization problem, we set $\mu = 1$;
otherwise, the discount factor $\mu$ has to be chosen in the interval $(0,1)$. Each iteration of  UCB  requires $O(S)$ arithmetic operations, one maximization and one simulation of the objective function.

\begin{algorithm}
\textbf{\textit{Step 0.} Initialization.} Simulate each  $\th \in\Theta = \{1,2,\ldots,S\}$ once to obtain $\cost_1(\th)$.\\  
Set $\widehat{\cost}_{\th,S} = \cost_1(\th)$ and 
$m_{\th,S} = 1$. 
Set $n=S+1$.

\textbf{\textit{Step 1a.} Sampling.} At time $n$  sample  candidate solution
\begin{align*}
\textstyle \th_n &= \argmax_{\th\in\Theta} \left[\widehat{\cost}_{\th,n-1} + B\sqrt{\frac{\xi\log (M_{n-1}+1)}{m_{\th,n-1}}}  \right] \\
 \text{ where } &
\widehat{\cost}_{\th,n-1} \textstyle= \frac{1}{m_{\th,n-1}} \sum_{\tau = 1}^{n-1} \mu^{n-\tau-1} \cost_{\tau}(\th_{\tau}) I(\th_\tau = \th), \\
m_{\th,n-1} &= \sum_{\tau=1}^{n-1} \mu^{n-\tau-1} I(\th_{\tau} = \th), \quad 
M_{n-1} = \sum_{i=1}^S m_{i,n-1} 
\end{align*}
 \textbf{\textit{Step 1b}.  Evaluation.}  Simulate to obtain $\cost_n(\th_n)$.

\textbf{\textit{Step 2.} Global Optimum Estimate.} $\th^\ast_n \in \argmax_{i\in\Theta} \widehat{\cost}_{i,n}$. \\
\textbf{\textit{Step 3.} Recursion.} Set $n \leftarrow n + 1$ and go to Step 1.

\caption{Upper confidence bound (UCB) algorithm for maximization of objective $C(\th)$, $\th \in \Th = \{1,2,\ldots,S\}$ with discount factor $\mu \in 
(0,1]$ and exploration constant $\xi >0$. $B$ is an upper bound on the objective function.}
\label{alg:UCB}
\end{algorithm}

\subsection{Numerical Examples} This section illustrates the performance of the above discrete stochastic optimization algorithms in estimating the mode of an unknown  probability mass function. 
 Let $\degdist(\th)$,  $\th \in \Th = \{0,1,\ldots,S\}$ denote the degree distribution\footnote{The degree of a node is the number of  connections or edges the node has to other nodes. 
The degree distribution $\degdist(\th)$ is the fraction of nodes in the network with degree $\th$, where $\th \in \Th = \{0,1,\ldots\}$. Note that $
\sum_{\th} \degdist(\th) = 1$ and so the
degree distribution is  a pmf with support on $\Th$.}   of a social network (random graph). Suppose that a pollster aims to estimate the mode of the degree
distribution, namely \beq  \th^* = \argmax_{\th \in \lbr 0,1,\ldots,S\rbr} \degdist(\th). \label{eq:modedef} \eeq
The pollster does not know  $\degdist(\th)$.
It  uses the following protocol to estimate the mode.
At each time $n$, the pollster chooses a specific  $\th_n \in \Th$, and then 
 asks a randomly sampled individual in the social network: Is your degree $\th_n$? The individual replies ``yes" (1)  or ``no" (0). Given these 
 responses $\{I(\th_n), n=1,2,\ldots\}$ where $I(\cdot)$ denotes the indicator function,
the pollster aims to solve the discrete stochastic optimization problem: Compute 
\begin{equation}
\label{eq:example-2}
\th^* = \argmin_{\degreelink \in \lbr 0,1,\ldots,S\rbr} - \E\lbr I( \degreelink_n)\rbr.
\end{equation}
Clearly the global optimizers of (\ref{eq:modedef}) and (\ref{eq:example-2}) coincide.
Below, we illustrate the performance of discrete stochastic optimization 
 Algorithm  \ref{alg:adaptive-search}  (abbreviated as AS), 
Algorithm \ref{alg:RS} (abbreviated as RS), and Algorithm \ref{alg:UCB} (abbreviated as UCB) for estimating $\th^*$ in (\ref{eq:example-2}).


In the numerical examples below,
 we simulated  the degree distribution as a Poisson pmf with parameter $\lambda$:
%
%
%
%
$$ 
\degdist(\model) = 
 \frac{{\lambda^\degreelink \exp(-\lambda)}}{\degreelink!},  \quad 
 \degreelink \in \{0,1,2,\ldots,S\}.
$$
It is straightforward to show that the 
mode of the  Poisson distribution is  
\beq \degreelink^* = \argmax_{\degreelink} \degdist(\degreelink)=
 \lfloor \lambda \rfloor. 
 \label{eq:groundtruth}
 \eeq If $\lambda$ is integer-valued, then  both $\lambda$ and $\lambda-1$ are modes of the Poisson pmf.
 Since the ground truth $\model^*$ is explicitly given by (\ref{eq:groundtruth}) we
have a simple benchmark
numerical study. (The algorithms do not use the knowledge that $\degdist$ is Poisson.)

\paragraph{Example: Static Discrete Stochastic Optimization.}
Consider the case where the Poisson rate $\lambda$ is a constant.
We consider  two  examples: i) $\lambda = 1$, which implies that the set of global optimizers is $\globalopt = \lbr 0,1\rbr$, and ii) $\lambda = 10$, in which case  $\globalopt = \lbr 9,10\rbr$. For each case, we further study the effect of the size of search space on the performance of algorithms by considering two instances: i) $S=10$, and ii) $S = 100$. Since the problem is static in the sense that $\globalopt$ is fixed for each case, one can use the results of~\cite{BF13} to show that if the exploration factor $\gamma$ in~\eqref{eq:smooth-br} decreases to zero sufficiently slowly, the sequence of samples $\lbr \th_n\rbr$ converges almost surely to the global minimum. We consider the following modifications to AS Algorithm~\ref{alg:adaptive-search}:
\begin{compactenum}
    \item[(i)] The constant step-size $\mu$ in~\eqref{eq:regret-update} is replaced by decreasing step-size $\mu(n) = \frac{1}{n}$;
    \item[(ii)] The exploration factor $\gamma$ in~\eqref{eq:smooth-br} is replaced by $\frac{1}{n^\beta}$, $0<\beta<1$.
\end{compactenum}
We chose  $\beta = 0.2$ and $\gamma = 0.01$. Also in Algorithm~\ref{alg:UCB}, we set  $B = 1$, and $\xi = 2$.

\begin{table}[!t]
  \centering
\caption{Example 1: Percentage of Independent Runs of Algorithms that Converged to the Global Optimum Set in $n$ Iterations.}
  \subtable[$\lambda = 1$]{
  \label{table:1}
  \centering
\renewcommand{\arraystretch}{1.1}
\begin{tabular}{cccccccc}
\firsthline
Iteration & \multicolumn{3}{c}{$S = 10$} & & \multicolumn{3}{c}{$S = 100$} \\
\cline{2-4} \cline{6-8}
$n$       &  AS  &      RS     &     UCB     & &  AS   &    RS       &     UCB \\
\hline
10        &  55  &      39     &     86      & &  11   &     6       &      43     \\
50        &  98  &      72     &     90      & &  30   &     18      &      79     \\
100       &  100 &      82     &     95      & &  48   &     29      &      83     \\
500       &  100 &      96     &     100     & &  79   &     66      &      89     \\
1000      &  100 &     100     &     100     & &  93   &     80      &      91     \\
5000      &  100 &     100     &     100     & &  100  &     96      &      99     \\
10000     &  100 &     100     &     100     & &  100  &     100     &      100    \\
\lasthline
\end{tabular}
  }
  \subtable[$\lambda = 10$]{
  \label{table:2}
    \centering
\renewcommand{\arraystretch}{1.1}
\begin{tabular}{cccccccc}
\firsthline
Iteration & \multicolumn{3}{c}{$S = 10$} & & \multicolumn{3}{c}{$S = 100$} \\
\cline{2-4} \cline{6-8}
$n$       & AS & RS & UCB & &  AS   & RS  & UCB \\
\hline
10        &  29  &     14     &    15     & &   7    &     3       &     2      \\
100       &  45  &     30     &    41     & &   16   &     9       &     13     \\
500       &  54  &     43     &    58     & &   28   &     21      &     25     \\
1000      &  69  &     59     &    74     & &   34   &     26      &     30     \\
5000      &  86  &     75     &    86     & &   60   &     44      &     44     \\
10000     &  94  &     84     &    94     & &   68   &     49      &     59     \\
20000     &  100 &     88     &    100    & &   81   &     61      &     74     \\
50000     &  100 &     95     &    100    & &   90   &     65      &     81     \\
\lasthline
\end{tabular}
  }
\label{table-both}
\end{table}

Table~\ref{table-both} displays the performance of the algorithms AS, RS and UCB.
To  give a fair comparison,  the performance of the algorithms are compared based on
number of simulation experiments performed by each algorithm.
Observe the following from  Table~\ref{table-both}: In all three algorithms, the speed of convergence decreases when either $S$ or $\lambda$ (or both) increases. However, the effect of increasing $\lambda$ is more substantial since the objective function values of the worst and best states are closer when $\lambda = 10$.
Given equal number of simulation experiments, higher percentage of cases that a particular method has converged to the global optima indicates faster convergence rate. 

To evaluate and compare efficiency of the algorithms, the sample path of the number of simulation experiments performed on non-optimal feasible solutions is displayed in Figure \ref{fig:1}, when $\lambda = 1$ and $S=100$. As can be seen, since the RS method randomizes among all (except the previously sampled) feasible solutions at each iteration, it performs approximately 98\% of the simulations on non-optimal elements. The UCB algorithm switches to its exploitation phase after a longer period of exploration as compared to the AS algorithm.  
\begin{figure}[h]
\setlength{\abovecaptionskip}{0em}
\setlength{\belowcaptionskip}{0em}
     \centering
     \includegraphics[width=2.6in]{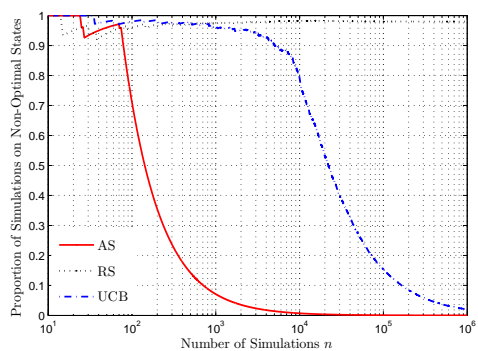}
     \caption{Example 1: Proportion of simulation effort expended on states outside the global optima set ($\lambda = 1$, $S = 100$).}
     \label{fig:1}
\end{figure}

In \cite{Kri16} we also present examples of how the algorithms perform when the optimum evolves according to a Markov chain.

\index{discrete stochastic optimization|)}

\section{Example 4: Mean Field Population Dynamics Models for  Social Sensing}  \index{social network! sensing}  \label{sec:popmf}
\index{social sensing}
\index{information diffusion in social network}
\index{diffusion! in social network|(}

We are interested in constructing tractable models for the diffusion of information over a social network comprising of a  population of interacting agents. 
As described in~\cite{Pin08}, such models arise in  a wide range of social phenomena such as diffusion of technological innovations,  ideas, behaviors, trends \cite{Gra78}, cultural fads, and economic conventions~\cite{Cha04} where
individual decisions are influenced by the decisions of others.  Once a tractable model has been constructed, 
Bayesian filters can be used to estimate the underlying state of nature \cite{KNH14}.

Consider a social network comprised of individuals (agents) who may or may not adopt a specific new technology (such as a particular brand of smartphone). We are interested in modeling how  adoption of  this technology
diffuses in the social network.
Figure \ref{fig:socialsensing} shows the schematic setup. 
The underlying state of nature $\{\nature_k\}$ can be viewed as the market conditions or competing  technologies that evolve with time and affect the adoption of the
technology.
The information (adoption of  technology) diffuses over the network --  the states of individual nodes (adopt or don't adopt) evolve  over time as a probabilistic function of the states of their neighbors and the   state of nature $\{\nature_k\}$.  Let $\pop_k$ denote the population state vector at time $k$; as explained below the $l$-th element of 
this vector, denoted $\pop_k(l)$, is  the fraction of the population with $k$ neighbors that has adopted the technology.
As the adoption of the new technology diffuses through the network, its effect is observed by social sensing -   such as user sentiment
on a micro-blogging platform like Twitter. 
  \index{sentiment analysis}
 The nodes that tweet their sentiments  act as social sensors.
 Suppose the state of nature $\nature_k$ changes   suddenly  due to a sudden market shock or presence of a new competitor.
 The goal for a market analyst or product manufacturer is to estimate the state of nature
  $\nature_k$  so as to detect the market shock or new competitor.

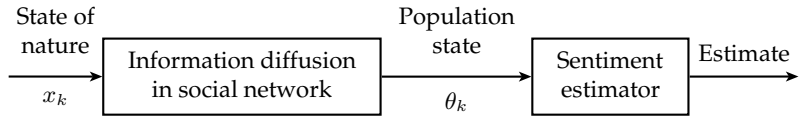
\begin{figure}[h]
\begin{pspicture}[showgrid=false](0.5,-0.25)(14,1.75)


\dotnode[dotstyle=square*,dotscale=0.001](12.5,0.75){dot}

\dotnode[dotstyle=square*,dotscale=0.001](2,0.75){H1}

\psblock(5.1,.75){B2}{\begin{tabular}{c} Information diffusion \\ in social network \end{tabular}}

\psblock(10,.75){B1}{\begin{tabular}{c} Sentiment  \\ estimator \end{tabular}}




\psset{style=Arrow}

\ncline{H1}{B2}  \naput[npos=.5]{\begin{tabular}{c} State of \\ nature\end{tabular}}
\nbput[npos=.5]{$\nature_k$}

\ncline{B2}{B1} \nbput[npos=.5]{$\pop_k$}
\naput[npos=.5]{\begin{tabular}{c}Population \\ state\end{tabular}}

\ncline{B1}{dot} \naput[npos=.5]{Estimate}

\end{pspicture}
\caption{Social Sensing of Sentiment} \label{fig:socialsensing}
\end{figure}

As a signal processing problem, the state of nature $\nature_k$   can be viewed as the signal (Markov chain), and the social network can be viewed as a sensor. The observed sentiment can be
viewed as an HMM: noisy measurement of $\pop_k$ (population state) which in turn depends on $\nature_k$ (state of nature).
The key difference compared to classical signal processing is that the social network (sensor) has dynamics due to the information diffusion over a graph. Estimating
$\nature_k$ can be formulated as a filtering problem, while
detecting sudden changes in $\nature_k$ can be formulated as a quickest detection problem. 

Dealing with large populations results in an combinatorial explosion in the size of the state space for $\pop_k$.  The aim of this section is to construct tractable models of the population dynamics by using the mean field
dynamics approximation.

\subsection{Population Model Dynamics} \label{sec:pop}
Let $\nature_k$ denote the state of nature at time $k$.
Consider  a population consisting of $\numagents $  agents indexed by $m=1,2,\ldots.\numagents$.
Each agent $m$ at time $k$ has  state 
$s_k^{(m)}\in \{1,2\ldots, L\}$.  Let the $L$ dimensional vector $\pop_k$ denote the fraction of agents in the $L$ different  states at time $k$.
We call $\pop_k$ the {\em population state}:
\beq  \pop_k(l)  =\frac{1}{\numagents}  \sum_{m=1}^\numagents I(s_k^{(m)}  = l) , \quad l =1,2,\ldots, L. \label{eq:popstate} \eeq
where $I(\cdot)$ denotes the indicator function.

Given the states $s_k^{(m)}$, $m=1,\ldots,M$ and hence  population state $\pop_k$ (\ref{eq:popstate}),
the population evolves as  a Markov process  in  the following 2-steps:
At   each time~$k$ \begin{compactitem} 
\item[{\em Step 1}.] An agent  $m_k$ is chosen uniformly from the $\numagents$ agents. The state $s_{k+1}^{(m_k)}$  at time $k+1$ of this agent
is simulated with
  transition probability
$$ \tp_{ij}(\nature_k,\pop_k) = \prob(s_{k+1} = j | s_k = i,\nature_k, \pop_k)$$ which depends on both the  state of nature and the population state.
\item[{\em Step 2}.] Then the population state $\pop_{k+1}$  reflects this updated state of agent $m_k$:
\beq  \pop_{k+1}  = \pop_k +  \frac{1}{\numagents} \left[ e_{s_{k+1}^{(m_k)}} - e_{s_k^{(m_k)}} \right]
\label{eq:popupdate}
\eeq  \end{compactitem}
Thus the transition probabilities for the  population process $\{\pop_k\}$ are
$$ \prob\bigl(\pop_{k+1}= \pop_k + \frac{1}{\numagents}[ e_j - e_i] \mid  \pop_k,\nature_k\bigr) =  \frac{1}{\numagents} \tp_{ij}(\nature_k,\pop_k) $$
Note that $\{\pop_k\}$ is an $\binom{\numagents + L -1}{L-1}$   state Markov chain with  state space 
$$ \popspace = \{ \pop: \sum_{l=1}^L \pop(l) =1, \;\pop(l) = n/L \text{ for some integer $n\geq 0$} \} .$$
Note also that $\popspace $ is a subset of the $L-1$ dimensional simplex denoted as $\I(L)$.

We are interested in modeling the evolution of the population process $\{\pop_k\}$ in (\ref{eq:popupdate}) when the number of agents
$\numagents$ is  large.  
\begin{theorem} \label{thm:mincpop}  For a population size of $\numagents$ agents,  where each agent has $L$ possible states, the population distribution process $\pop_k$ evolves in $\popspace \subset \I(L)$ as
\begin{align}
\pop_{k+1} &= \pop_k  + \frac{1}{\numagents}  H(\nature_k,\pop_k) +  \mtg_k , \quad \pop_0 \in \popspace,\label{eq:mincpop}
\\
 \text{ where  } &
 H(\nature_k,\pop_k) = \sum_{i=1}^L \sum_{j=1}^L  (e_j - e_i)  \tp_{ij}(\nature_k,\pop_k).  
 \nn \end{align}
Here $\mtg_k$ is an $L$ dimensional finite state martingale increment process with 
$ \| \mtg_k\|_2 \leq \Gamma/\numagents$ for some positive constant $\Gamma$.
\end{theorem}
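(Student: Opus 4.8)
The plan is to read the decomposition (\ref{eq:mincpop}) straight off the two-step construction of $\{\pop_k\}$: subtract from the one-step increment its conditional mean, call the remainder $\mtg_k$, and then check the martingale property and the size bound by a short conditioning argument on a finite state space. Let $\mathcal{F}_k$ be the $\sigma$-algebra generated by $\nature_0,\dots,\nature_k$, the initial agent states, and all sampling variables (the uniformly chosen agent indices and the simulated transitions) used at times $0,1,\dots,k-1$; through (\ref{eq:popstate}) this carries $\pop_0,\dots,\pop_k$. Define
\[
\mtg_k \ :=\ \pop_{k+1}-\pop_k-\tfrac{1}{\numagents}\,H(\nature_k,\pop_k),
\]
so that (\ref{eq:mincpop}) holds by construction, and it only remains to verify $\E[\mtg_k\mid\mathcal{F}_k]=0$ and $\|\mtg_k\|_2\le\Gamma/\numagents$.

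First I would compute $\E[\pop_{k+1}-\pop_k\mid\mathcal{F}_k]$. By Step~1--Step~2 we have $\pop_{k+1}-\pop_k=\tfrac{1}{\numagents}\big(e_{s_{k+1}^{(m_k)}}-e_{s_k^{(m_k)}}\big)$, where $m_k$ is uniform on $\{1,\dots,\numagents\}$ independent of $\mathcal{F}_k$; hence the conditional law of the current state $s_k^{(m_k)}$ given $\mathcal{F}_k$ is the empirical occupation measure $\pop_k$, and $\prob\big(s_{k+1}^{(m_k)}=j\mid\mathcal{F}_k,\,s_k^{(m_k)}=i\big)=\tp_{ij}(\nature_k,\pop_k)$. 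Taking iterated expectations --- first over the transition $i\to j$, then over $i$ drawn from $\pop_k$, then over the uniform choice of $m_k$ --- collapses the double sum to $\tfrac{1}{\numagents}H(\nature_k,\pop_k)$, the drift term in the statement. Thus $\E[\mtg_k\mid\mathcal{F}_k]=0$; since $\pop_{k+1}$, and therefore $\mtg_k$, is $\mathcal{F}_{k+1}$-measurable, $\{\mtg_k\}$ is an $L$-dimensional martingale-difference process. (The conditional law of the increment depends on the past only through $(\pop_k,\nature_k)$, so the conclusion persists relative to the coarser natural filtration of $(\pop_k,\nature_k)$.)

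For the size bound, note that the jump $e_{s_{k+1}^{(m_k)}}-e_{s_k^{(m_k)}}$ is either $0$ or has exactly two nonzero entries $+1$ and $-1$, so $\|\pop_{k+1}-\pop_k\|_2\le\sqrt{2}/\numagents$. Likewise $\big\|\tfrac{1}{\numagents}H(\nature_k,\pop_k)\big\|_2\le\tfrac{1}{\numagents}\sum_{i,j}\|e_j-e_i\|_2\,\tp_{ij}(\nature_k,\pop_k)\le\tfrac{\sqrt{2}}{\numagents}\sum_{i,j}\tp_{ij}(\nature_k,\pop_k)$, and since each row of the stochastic matrix $\tp(\nature_k,\pop_k)$ sums to one, this last quantity is bounded by a constant depending only on $L$. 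The triangle inequality then gives $\|\mtg_k\|_2\le\Gamma/\numagents$ with $\Gamma=\Gamma(L)$ independent of $\numagents$ and of the sample path, which is exactly the claim.

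The argument is essentially bookkeeping, so I do not expect a genuine obstacle; the one point needing care is the conditioning step --- namely that, because $m_k$ is drawn uniformly, the current state $s_k^{(m_k)}$ is distributed according to $\pop_k$ given $\mathcal{F}_k$. It is precisely this fact that makes the conditional mean of the increment coincide with $\tfrac{1}{\numagents}H(\nature_k,\pop_k)$ rather than some other average of the transition rates, and it is also what keeps the martingale increments $O(1/\numagents)$ uniformly in the realization.
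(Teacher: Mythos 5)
Your overall plan --- define $\mtg_k := \pop_{k+1}-\pop_k-\frac{1}{\numagents}H(\nature_k,\pop_k)$, verify that it is a martingale difference by conditioning on the uniformly sampled agent, and get the $O(1/\numagents)$ bound because both the one-step jump and the drift are $O(1/\numagents)$ --- is the standard route, and it is the argument the paper intends (the text itself defers the proof to \cite{Kri16}); your filtration setup, measurability remark, and the norm estimate are all fine.

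However, the step you yourself flag as the crux does not deliver what you claim. Since $m_k$ is uniform over the $\numagents$ agents, $\prob\bigl(s_k^{(m_k)}=i\mid\mathcal{F}_k\bigr)=\pop_k(i)$, so the iterated expectation gives
\[
\E\bigl[\pop_{k+1}-\pop_k\mid\mathcal{F}_k\bigr]=\frac{1}{\numagents}\sum_{i=1}^L\sum_{j=1}^L (e_j-e_i)\,\pop_k(i)\,\tp_{ij}(\nature_k,\pop_k),
\]
i.e.\ the $\pop_k(i)$-weighted double sum; it does not ``collapse'' to the unweighted $H(\nature_k,\pop_k)=\sum_{i,j}(e_j-e_i)\tp_{ij}(\nature_k,\pop_k)$ written in the statement. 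With $H$ read literally, the $\mtg_k$ you define fails to have zero conditional mean in general: take $L=2$, $\tp_{12}=1$, $\tp_{21}=0$, and all agents already in state $2$; then the increment is $0$ while the stated drift is $(e_2-e_1)/\numagents$, so $\E[\mtg_k\mid\mathcal{F}_k]\neq 0$. The resolution is that the drift must carry the occupation weights, $H(\nature,\pop)=\sum_{i,j}(e_j-e_i)\,\pop(i)\,\tp_{ij}(\nature,\pop)$, which is exactly what your own conditioning argument produces; with that reading the rest of your proof (martingale-difference property and $\|\mtg_k\|_2\le \sqrt{2}/\numagents+\|H(\nature_k,\pop_k)\|_2/\numagents\le\Gamma/\numagents$) goes through verbatim. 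As written, though, you assert an identity that is false, so either carry the $\pop_k(i)$ factors through explicitly and note that the theorem's $H$ should be the weighted sum, or the martingale property cannot be established.
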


\subsection{Mean Field Dynamics} \index{mean field dynamics} \label{sec:mfdpop}
The main result
below shows that for large $\numagents $,  the population process $\pop_k$ in (\ref{eq:mincpop}) converges to  a deterministic difference equation (or equivalently,
an ODE)  called the mean field dynamics. Thus the ODE method  serves the purpose of constructing  a tractable model for the population dynamics.

\begin{theorem}[Mean Field Dynamics] \label{thm:mfd}
Consider the deterministic mean field dynamics process with state $\bpop_k \in \I(L)$ (the $L-1$ dimensional unit simplex):
\beq \bpop_{k+1}  = \bpop_k  + \frac{1}{\numagents}  H( \nature_k,\bpop_k), \quad \bpop_0 = \pop_0.  \label{eq:mfd}  \eeq
Assume that $H(\th,\state)$ is  Lipschitz continuous\footnote{For any $\alpha, \beta \in \I(L)$, 
$\|H(\nature,\alpha) - H(\nature,\beta)\|_\infty \leq \lambda \| \alpha - \beta \|_\infty $ for some positive constant $\lambda$. \label{lip}}
 in $\th$.
Then for a time horizon of $\finaltime$ points,   the deviation between the mean field dynamics $\bpop_k$ in (\ref{eq:mfd}) and actual population distribution   $\pop_k$
in  (\ref{eq:mincpop}) satisfies
\beq \label{eq:mfdev}
\prob \bigl\{ \max_{0 \leq k \leq \finaltime} \left\| \bpop_k - \pop_k \right\|_\infty \geq \epsilon\bigr\}
\leq  C_1  \exp(-C_2 \epsilon^2 \numagents)
\end{equation}
providing $\finaltime = O(\numagents)$.
\end{theorem}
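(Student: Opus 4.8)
The plan is to treat \eqref{eq:mincpop} as a stochastic approximation recursion with step size $1/\numagents$ and apply a martingale concentration argument to control the cumulative effect of the noise $\mtg_k$. First I would write the difference $\delt_k = \pop_k - \bpop_k$ and subtract \eqref{eq:mfd} from \eqref{eq:mincpop} to obtain the telescoped identity
\beq
\delt_k = \frac{1}{\numagents}\sum_{j=0}^{k-1}\bigl[ H(\nature_j,\pop_j) - H(\nature_j,\bpop_j)\bigr] + \sum_{j=0}^{k-1}\mtg_j, \quad \delt_0 = 0. \nn
\eeq
Using the Lipschitz hypothesis (footnote \ref{lip}), the first sum is bounded in $\|\cdot\|_\infty$ by $\frac{\lambda}{\numagents}\sum_{j=0}^{k-1}\|\delt_j\|_\infty$, so if I set $M_k = \sum_{j=0}^{k-1}\mtg_j$ then $\|\delt_k\|_\infty \le \frac{\lambda}{\numagents}\sum_{j=0}^{k-1}\|\delt_j\|_\infty + \|M_k\|_\infty$. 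A discrete Gr\"onwall inequality then gives, for $k \le \finaltime$,
\beq
\max_{0\le k\le \finaltime}\|\delt_k\|_\infty \;\le\; \Bigl(\max_{0\le k\le \finaltime}\|M_k\|_\infty\Bigr)\, \exp\!\Bigl(\tfrac{\lambda \finaltime}{\numagents}\Bigr). \nn
\eeq
Since $\finaltime = O(\numagents)$, the exponential factor is a constant $c_\lambda$, so the event $\{\max_k \|\delt_k\|_\infty \ge \e\}$ is contained in $\{\max_k \|M_k\|_\infty \ge \e/c_\lambda\}$.

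Next I would bound the tail of $\max_{0\le k \le \finaltime}\|M_k\|_\infty$. Here $M_k$ is a sum of martingale increments $\mtg_j$ (Theorem \ref{thm:mincpop}) with the uniform bound $\|\mtg_j\|_2 \le \Gamma/\numagents$ a.s., hence $\|\mtg_j\|_\infty \le \Gamma/\numagents$ a.s. Applying a vector-valued Azuma--Hoeffding inequality coordinate by coordinate (and a union bound over the $L$ coordinates), for each coordinate $M_k(l)$ one gets $\prob(\max_{k\le \finaltime}|M_k(l)| \ge \delta) \le 2\exp(-\delta^2 \numagents^2 / (2\finaltime \Gamma^2))$; using the $\max$ over a martingale one keeps the maximal form via Doob's inequality. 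Substituting $\delta = \e/c_\lambda$ and $\finaltime = O(\numagents)$ collapses the exponent to $-C_2\e^2\numagents$ for a suitable $C_2>0$, and the factor $2L$ from the union bound is absorbed into $C_1$. This yields \eqref{eq:mfdev}.

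The main obstacle I anticipate is making the martingale concentration step clean in the vector-valued, maximal form: one needs a concentration inequality that simultaneously handles (i) the supremum over $k \le \finaltime$ (a maximal inequality, so either Doob's submartingale inequality applied to $\exp(\theta\langle a, M_k\rangle)$ for fixed direction $a$, or the Azuma maximal version directly) and (ii) the $\ell_\infty$ norm over $L$ coordinates (a union bound, cheap, or a chaining/covering argument if one wants sharper constants). The cleanest route is: for a fixed unit coordinate vector $e_l$, $\{\langle e_l, M_k\rangle\}_k$ is a scalar martingale with bounded increments $|\langle e_l,\mtg_j\rangle|\le \Gamma/\numagents$, apply the Azuma--Hoeffding maximal inequality to it, then union-bound over $l=1,\dots,L$. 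Everything else (the Gr\"onwall step, the bookkeeping of constants $C_1,C_2$ absorbing $\lambda$, $\Gamma$, $L$, and the $O(\numagents)$ bound on $\finaltime$) is routine. One should also double-check that the deterministic drift terms $H(\nature_j,\pop_j) - H(\nature_j,\bpop_j)$ really are controlled purely by the Lipschitz constant uniformly in the (arbitrary, possibly time-varying) nature sequence $\{\nature_j\}$ — which they are, since the Lipschitz bound in footnote \ref{lip} is stated uniformly in $\nature$.
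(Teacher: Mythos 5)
Your proposal is correct and follows essentially the same route as the paper's proof, which (as noted in the Complements and Sources section) is based on \cite{BW03} and combines exactly these two ingredients: a Gr\"onwall-type bound on the drift discrepancy using the Lipschitz property of $H$, and an Azuma--Hoeffding (maximal) martingale concentration bound on the accumulated noise $\sum_j \mtg_j$ with increments of size $O(1/\numagents)$, together with the union bound over the $L$ coordinates and the condition $\finaltime = O(\numagents)$ to keep the exponential Gr\"onwall factor constant. No gaps beyond routine bookkeeping.
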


As an example, \cite{KNH14} uses the above formulation for the dynamics of information diffusion in a social network.

\section{\pwe} \label{sec:snbandit}
The ODE analysis for stochastic approximation algorithms was pioneered by Ljung  (see  \cite{Lju77,LS83}) and subsequently by Kushner and
co-workers \cite{Kus84,KY03}.  In this chapter we have only scratched the surface of this  remarkably powerful analysis tool. Apart from
the books listed above, \cite{BMP90,SK95b,Bor08} are also excellent references for analysis of such algorithms. The papers \cite{BHS05,BHS06} illustrate
the power of ODE method (and generalizations to differential inclusions) for analyzing the dynamics of game-theoretic learning.

\secn \ref{sec:hmmsa} uses the LMS algorithm for tracking parameters that jump infrequent but by possibly  large amounts.
In most traditional analyses,
the parameter changes by small amounts over small intervals of time. As mentioned in \secn \ref{sec:fastmc}, one can also analyze
the tracking capability for Markov chains that jump by large amounts on short intervals of time \cite{YKI04,YIK09}. In such cases, 
stochastic averaging leads to a
 Markov modulated ODE (instead of a deterministic ODE).  

It would be remiss of us not to mention the substantial literature in the analysis of adaptive filtering algorithms \cite{Say08,Hay13}.
The proof of Theorem \ref{MSE} uses perturbed Lyapunov functions. Solo \cite{SK95b} was influential in developing discounted perturbed Lyapunov function methods \index{Lyapunov function! perturbed}
for analysis of adaptive filtering algorithms.
The mean field dynamics proof of Theorem \ref{thm:mfd} is based on \cite{BW03} and uses Azuma-Hoeffding's inequality. It  requires far less mathematical machinery. 
The monograph \cite{Kur81} contains several deep results in Markov chain approximations of population processes and dynamics.

Survey papers  in discrete stochastic optimization include \cite{BMG09}. There has also been much recent work in using the ODE
analysis for ant-colony optimization algorithms and the use of ant-colony optimization in Q-learning \cite{DG14}.

We have omitted several important aspects of  stochastic approximation algorithms. One particular
intriguing result is  Polyak's iterate averaging \cite{PJ92}. By choosing a larger
scalar step size together with an averaging step, it can be shown \cite{KY03}, that one can achieve the asymptotic convergence rate of a matrix step size.  
This is also explored in the context of  LMS algorithms in \cite{YKI03,YK05a,YK05b}.

\subsection{Consensus Stochastic Approximation Algorithms} \label{sec:consensuslms}
There has  been much \index{diffusion LMS algorithm}
 recent work on {\em diffusion} and {\em consensus} stochastic approximation algorithms, where multiple stochastic approximation algorithms communicate with each other over a graph.
 This area has been pioneered by  Sayed  (see \cite{Say14b} and references therein) and shows remarkable potential in a variety of distributed processing applications.
 
 In this section we briefly analyze the consensus stochastic approximation algorithm.
 The consensus  stochastic approximation algorithm is of the form
\beq  \model_{k+1} = A\, \model_k + \stepsize H(\model_k,\state_k),  \label{eq:diffsa} \eeq
where $A$ is a symmetric positive definite stochastic matrix.
For $A=I$, (\ref{eq:diffsa}) becomes the standard stochastic approximation algorithm. \index{stochastic approximation algorithm! consensus}
\index{consensus! stochastic approximation}

One can analyze the consensus algorithm as follows. Define the matrix $Q$ such that  $A = \exp(Q\epsilon)$ where $\exp(\cdot)$ denotes the matrix exponential.
So $Q$ is proportional to the matrix logarithm of $A$. (Since $A$ is positive definite, the real-valued  matrix logarithm always exists.)  Indeed
$Q$ is a generator matrix with $Q_{ii} < 0$ and $Q \one = 0$. Then  as $\epsilon$ becomes sufficiently small
(recall the ODE analysis applies for  the interpolated process with $\epsilon \rightarrow 0$) since $ \exp(Q\epsilon) \approx I + \epsilon Q + o(\epsilon)$,  one can express (\ref{eq:diffsa}) as
\beq   \model_{k+1} =  \model_k + \stepsize \big(Q \model_k + H(\model_k,\state_k) \big).  \label{eq:consensusode} \eeq
Therefore, the   consensus ODE associated with (\ref{eq:diffsa}) is 
\beq  \frac{d\model}{dt} = Q \model + \E_{\belief_\model} \{ H(\model, \state) \} . \label{eq:c_ode}\eeq
Typically $Q$ is chosen such that at the optimum value $\th^*$, $Q \model^* = 0$ implying that the consensus ODE and original ODE have
identical attractors.

 To summarize, despite at first sight looking different to a standard stochastic approximation algorithm, the consensus stochastic
approximation algorithm (\ref{eq:diffsa}) in fact is equivalent to a standard stochastic approximation algorithm (\ref{eq:consensusode}) by taking the matrix logarithm of the 
consensus matrix $A$. \cite{NKY13,NKY13a} describe such algorithms for tracking the equilibria of time varying games. One can also consider consensus in the space of distributions and tracking such a consensus distribution as described in \cite{KTY09}. The variance of the consensus LMS using the diffusion approximation is analyzed in
\cite{Kri16}.

\clearpage

\bibliographystyle{plain}

\bibliography{$HOME/styles/bib/vkm}

\begin{thebibliography}{10}

\bibitem{VK03}
F.~Vazquez Abad and V.~Krishnamurthy.
\newblock Constrained stochastic approximation algorithms for adaptive control
  of constrained {M}arkov decision processes.
\newblock In {\em 42nd {IEEE} Confernce on Decision and Control}, pages
  2823--2828, 2003.

\bibitem{ABB01}
J.~Abounadi, D.~P. Bertsekas, and V.~Borkar.
\newblock Learning algorithms for {M}arkov decision processes with average
  cost.
\newblock {\em SIAM Journal on Control and Optimization}, 40(3):681--698, 2001.

\bibitem{DG14}
M.~Dorigo amd M.~Gambardella.
\newblock Ant-q: A reinforcement learning approach to the traveling salesman
  problem.
\newblock In {\em Proceedings of 12th International Confeference on Machine
  Learning}, pages 252--260, 2014.

\bibitem{And96}
S.~Andradottir.
\newblock A global search method for discrete stochastic optimization.
\newblock {\em SIAM Journal on Optimization}, 6(2):513--530, May 1996.

\bibitem{And99}
S.~Andradottir.
\newblock Accelerating the convergence of random search methods for discrete
  stochastic optimization.
\newblock {\em ACM Transactions on Modelling and Computer Simulation},
  9(4):349--380, Oct. 1999.

\bibitem{ABF02}
P.~Auer, N.~Cesa-Bianchi, and P.~Fischer.
\newblock Finite-time analysis of the multiarmed bandit problem.
\newblock {\em Machine Learning}, 47(2-3):235--256, 2002.

\bibitem{BB02}
P.~Bartlett and J.~Baxter.
\newblock Estimation and approximation bounds for gradient-based reinforcement
  learning.
\newblock {\em J. Comput. Syst. Sci.}, 64(1):133--150, 2002.

\bibitem{BF13}
M.~Benaim and M.~Faure.
\newblock Consistency of vanishingly smooth fictitious play.
\newblock {\em Math. Oper. Res.}, 38(3):437--450, Aug. 2013.

\bibitem{BHS05}
M.~Benaim, J.~Hofbauer, and S.~Sorin.
\newblock Stochastic approximations and differential inclusions.
\newblock {\em SIAM Journal on Control and Optimization}, 44(1):328--348, 2005.

\bibitem{BHS06}
M.~Benaim, J.~Hofbauer, and S.~Sorin.
\newblock Stochastic approximations and differential inclusions, {P}art {II}:
  Applications.
\newblock {\em Mathematics of Operations Research}, 31(3):673--695, 2006.

\bibitem{BW03}
M.~Benaim and J.~Weibull.
\newblock Deterministic approximation of stochastic evolution in games.
\newblock {\em Econometrica}, 71(3):873--903, 2003.

\bibitem{BMP90}
A.~Benveniste, M.~Metivier, and P.~Priouret.
\newblock {\em Adaptive Algorithms and Stochastic Approximations}, volume~22 of
  {\em Applications of Mathematics}.
\newblock Springer-Verlag, 1990.

\bibitem{Ber00}
D.~P. Bertsekas.
\newblock {\em Nonlinear Programming}.
\newblock Athena Scientific, Belmont, MA., 2000.

\bibitem{BT96}
D.~P. Bertsekas and J.~N. Tsitsiklis.
\newblock {\em Neuro-Dynamic Programming}.
\newblock Athena Scientific, Belmont, MA., 1996.

\bibitem{BY12}
D.~P. Bertsekas and H.~Yu.
\newblock Q-learning and enhanced policy iteration in discounted dynamic
  programming.
\newblock {\em Mathematics of Operations Research}, 37(1):66--94, 2012.

\bibitem{BMG09}
L.~Bianchi, M.~Dorigo, L.~Gambardella, and W.~Gutjahr.
\newblock A survey on metaheuristics for stochastic combinatorial optimization.
\newblock {\em Natural Computing: An International Journal}, 8(2):239--287,
  2009.

\bibitem{Bil68}
P.~Billingsley.
\newblock {\em Convergence of Probability Measures}.
\newblock John Wiley, N.Y., 1968.

\bibitem{Bor08}
V.~S. Borkar.
\newblock {\em Stochastic approximation. A Dynamical Systems Viewpoint}.
\newblock Cambridge University Press, 2008.

\bibitem{CMR05}
O.~Cappe, E.~Moulines, and T.~Ryden.
\newblock {\em Inference in Hidden {M}arkov Models}.
\newblock Springer-Verlag, 2005.

\bibitem{Cha04}
C.~Chamley.
\newblock {\em Rational herds: Economic Models of Social Learning}.
\newblock Cambridge University Press, 2004.

\bibitem{CKM94}
I.B. Collings, V.~Krishnamurthy, and J.B. Moore.
\newblock On-line identification of hidden {M}arkov models via recursive
  prediction error techniques.
\newblock {\em IEEE Transactions on Signal Processing}, 42(12):3535--3539,
  December 1994.

\bibitem{DK07}
D.~Djonin and V.~Krishnamurthy.
\newblock Q-learning algorithms for constrained {M}arkov decision processes
  with randomized monotone policies: Applications in transmission control.
\newblock {\em IEEE Transactions on Signal Processing}, 55(5):2170--2181, 2007.

\bibitem{EK86}
S.~N. Ethier and T.~G. Kurtz.
\newblock {\em {M}arkov Processes---Characterization and Convergence}.
\newblock Wiley, 1986.

\bibitem{FL98}
D.~Fudenberg and D.~K. Levine.
\newblock {\em The Theory of Learning in Games}.
\newblock MIT Press, 1998.

\bibitem{FL95}
D.~Fudenberg and D.K. Levine.
\newblock Consistency and cautious fictitious play.
\newblock {\em Journal of Economic Dynamics and Control}, 19(5-7):1065--1089,
  1995.

\bibitem{Gra78}
M.~Granovetter.
\newblock Threshold models of collective behavior.
\newblock {\em American Journal of Sociology}, 83(6):1420--1443, May 1978.

\bibitem{Hay13}
S.~Haykin.
\newblock {\em Adaptive Filter Theory}.
\newblock Information and System Sciences Series. Prentice Hall, fifth edition,
  2013.

\bibitem{HC91}
Y.-C. Ho and X.-R. Cao.
\newblock {\em Discrete Event Dynamic Systems and Perturbation Analysis}.
\newblock Kluwer Academic, Boston, 1991.

\bibitem{HS02}
J.~Hofbauer and W.~Sandholm.
\newblock On the global convergence of stochastic fictitious play.
\newblock {\em Econometrica}, 70(6):2265--2294, Nov. 2002.

\bibitem{KS91}
I.~Karatzas and S.~Shreve.
\newblock {\em Brownian Motion and Stochastic Calculus}.
\newblock Springer, second edition, 1991.

\bibitem{Kri11}
V.~Krishnamurthy.
\newblock Bayesian sequential detection with phase-distributed change time and
  nonlinear penalty -- a lattice programming {POMDP} approach.
\newblock {\em IEEE Transactions on Information Theory}, 57(3):7096--7124, Oct.
  2011.

\bibitem{Kri12}
V.~Krishnamurthy.
\newblock Quickest detection {POMDPs} with social learning: Interaction of
  local and global decision makers.
\newblock {\em IEEE Transactions on Information Theory}, 58(8):5563--5587,
  2012.

\bibitem{Kri13}
V.~Krishnamurthy.
\newblock How to schedule measurements of a noisy {M}arkov chain in decision
  making?
\newblock {\em IEEE Transactions on Information Theory}, 59(9):4440--4461, July
  2013.

\bibitem{Kri16}
V.~Krishnamurthy.
\newblock {\em Partially Observed Markov Decision Processes. From Filtering to
  Controlled Sensing}.
\newblock Cambridge University Press, 2016.

\bibitem{KV12}
V.~Krishnamurthy and F.~Vazquez Abad.
\newblock Gradient based policy optimization of constrained unichain {M}arkov
  decision processes.
\newblock In S.~Cohen, D.~Madan, and T.~Siu, editors, {\em Stochastic
  Processes, Finance and Control: A Festschrift in Honor of Robert J. Elliott}.
  World Scientific, 2012.
\newblock http://arxiv.org/abs/1110.4946.

\bibitem{KD07}
V.~Krishnamurthy and D.~Djonin.
\newblock Structured threshold policies for dynamic sensor scheduling--a
  partially observed {M}arkov decision process approach.
\newblock {\em IEEE Transactions on Signal Processing}, 55(10):4938--4957, Oct.
  2007.

\bibitem{KD09}
V.~Krishnamurthy and D.V. Djonin.
\newblock {Optimal threshold policies for multivariate {POMDPs} in radar
  resource management}.
\newblock {\em IEEE Transactions on Signal Processing}, 57(10), 2009.

\bibitem{KNH14}
V.~Krishnamurthy, O.~Namvar Gharehshiran, and M.~Hamdi.
\newblock Interactive sensing and decision making in social networks.
\newblock {\em Foundations and Trends{\textregistered} in Signal Processing},
  7(1-2):1--196, 2014.

\bibitem{VKM03}
V.~Krishnamurthy and F.~Vazquez Abad~K. Martin.
\newblock Implementation of gradient estimation to a constrained {M}arkov
  decision problem.
\newblock In {\em IEEE Conference on Decision and Control}, Maui, Hawaii, 2003.

\bibitem{KM93}
V.~Krishnamurthy and J.B. Moore.
\newblock On-line estimation of hidden {M}arkov model parameters based on the
  {K}ullback-{L}eibler information measure.
\newblock {\em IEEE Transactions on Signal Processing}, 41(8):2557--2573,
  August 1993.

\bibitem{KTY09}
V.~Krishnamurthy, K.~Topley, and G.~Yin.
\newblock Consensus formation in a two-time-scale {{M}arkovian} system.
\newblock {\em SIAM Journal Multiscale Modeling and Simulation},
  7(4):1898--1927, 2009.

\bibitem{KW09}
V.~Krishnamurthy and B.~Wahlberg.
\newblock {POMDP} multiarmed bandits -- structural results.
\newblock {\em Mathematics of Operations Research}, 34(2):287--302, May 2009.

\bibitem{KWY04}
V.~Krishnamurthy, X.~Wang, and G.~Yin.
\newblock Spreading code optimization and adaptation in {CDMA} via discrete
  stochastic approximation.
\newblock {\em IEEE Trans.\ Info Theory}, 50(9):1927--1949, Sept. 2004.

\bibitem{KY02}
V.~Krishnamurthy and G.~Yin.
\newblock Recursive algorithms for estimation of hidden {M}arkov models and
  autoregressive models with {M}arkov regime.
\newblock {\em IEEE Transactions on Information Theory}, 48(2):458--476,
  February 2002.

\bibitem{Kur81}
T.~G. Kurtz.
\newblock {\em Approximation of population processes}, volume~36.
\newblock SIAM, 1981.

\bibitem{Kus84}
H.~J. Kushner.
\newblock {\em Approximation and Weak Convergence Methods for Random Processes,
  with applications to Stochastic Systems Theory}.
\newblock MIT Press, Cambridge, MA, 1984.

\bibitem{KC78}
H.J. Kushner and D.S. Clark.
\newblock {\em Stochastic Approximation Methods for Constrained and
  Unconstrained Systems}.
\newblock Springer-Verlag, 1978.

\bibitem{KY03}
H.J. Kushner and G.~Yin.
\newblock {\em Stochastic Approximation Algorithms and Recursive Algorithms and
  Applications}.
\newblock Springer-Verlag, 2nd edition, 2003.

\bibitem{Lju77}
L.~Ljung.
\newblock Analysis of recursive stochastic algorithms.
\newblock {\em IEEE Transactions on Auto. Control}, AC-22(4):551--575, 1977.

\bibitem{LS83}
L.~Ljung and T.~S{\"{o}}derstr{\"{o}}m.
\newblock {\em Theory and practice of recursive identification}.
\newblock MIT Press, 1983.

\bibitem{Pin08}
D.~L{\'o}pez-Pintado.
\newblock Diffusion in complex social networks.
\newblock {\em Games and Economic Behavior}, 62(2):573--590, 2008.

\bibitem{MKS05}
A.~Misra, V.~Krishnamurthy, and R.~Schober.
\newblock Stochastic learning algorithms for adaptive modulation.
\newblock In {\em Signal Processing Advances in Wireless Communications, 2005
  IEEE 6th Workshop on}, pages 756--760. IEEE, 2005.

\bibitem{NKY13a}
O.~Namvar, V.~Krishnamurthy, and G.~Yin.
\newblock Distributed energy-aware diffusion least mean squares: Game-theoretic
  learning.
\newblock {\em IEEE Journal of Selected Topics in Signal Processing}, 7(5):821,
  2013.

\bibitem{NKY13}
O.~Namvar, V.~Krishnamurthy, and G.~Yin.
\newblock Distributed tracking of correlated equilibria in regime switching
  noncooperative games.
\newblock {\em IEEE Transactions on Automatic Control}, 58(10):2435--2450,
  2013.

\bibitem{NKY14}
O.~Namvar, V.~Krishnamurthy, and G.~Yin.
\newblock Adaptive search algorithms for discrete stochastic optimization: A
  smooth best-response approach.
\newblock {\em arXiv preprint arXiv:1402.3354}, 2014.

\bibitem{Pfl96}
G.~Pflug.
\newblock {\em Optimization of Stochastic Models: The Interface between
  Simulation and Optimization}.
\newblock Kluwer Academic Publishers, 1996.

\bibitem{PJ92}
B.T. Polyak and A.B. Juditsky.
\newblock Acceleration of stochastic approximation by averaging.
\newblock {\em SIAM Journal of Control and Optimization}, 30(4):838--855, July
  1992.

\bibitem{Put94}
M.~Puterman.
\newblock {\em {M}arkov Decision Processes}.
\newblock John Wiley, 1994.

\bibitem{Say08}
A.~Sayed.
\newblock {\em Adaptive Filters}.
\newblock Wiley, 2008.

\bibitem{Say14b}
A.~H. Sayed.
\newblock Adaptation, learning, and optimization over networks.
\newblock {\em Foundations and Trends in Machine Learning}, 7(4--5):311--801,
  2014.

\bibitem{SK95}
R.~Simmons and S.~Konig.
\newblock Probabilistic navigation in partially observable environments.
\newblock In {\em Proceedings of 14th International Joint Conference on
  Artificial Intelligence}, pages 1080--1087, Montreal, Canada, 1995. Morgan
  Kaufman.

\bibitem{SK95b}
V.~Solo and X.~Kong.
\newblock {\em Adaptive Signal Processing Algorithms -- Stability and
  Performance}.
\newblock Prentice Hall, N.J., 1995.

\bibitem{Spa03}
J.~Spall.
\newblock {\em Introduction to Stochastic Search and Optimization}.
\newblock Wiley, 2003.

\bibitem{SB98}
R.~Sutton and A.~Barto.
\newblock {\em Reinforcement learning: An introduction}.
\newblock MIT Press, 1998.

\bibitem{YIK09}
G.~Yin, C.~Ion, and V.~Krishnamurthy.
\newblock How does a stochastic optimization/approximation algorithm adapt to a
  randomly evolving optimum/root with jump {M}arkov sample paths.
\newblock {\em Mathematical programming}, 120(1):67--99, 2009.

\bibitem{YK05a}
G.~Yin and V.~Krishnamurthy.
\newblock Least mean square algorithms with {M}arkov regime switching limit.
\newblock {\em IEEE Transactions on Automatic Control}, 50(5):577--593, May
  2005.

\bibitem{YK05b}
G.~Yin and V.~Krishnamurthy.
\newblock {LMS} algorithms for tracking slow {M}arkov chains with applications
  to hidden {M}arkov estimation and adaptive multiuser detection.
\newblock {\em IEEE Transactions on Information Theory}, 51(7), July 2005.

\bibitem{YKI03}
G.~Yin, V.~Krishnamurthy, and C.~Ion.
\newblock Iterate averaging sign algorithms for adaptive filtering with
  applications to blind multiuser detection.
\newblock {\em IEEE Transactions on Information Theory}, 48(3):657--671, March
  2003.

\bibitem{YKI04}
G.~Yin, V.~Krishnamurthy, and C.~Ion.
\newblock Regime switching stochastic approximation algorithms with application
  to adaptive discrete stochastic optimization.
\newblock {\em SIAM Journal on Optimization}, 14(4):117--1215, 2004.

\bibitem{YZ06}
G.~Yin and Q.~Zhang.
\newblock {\em Discrete-time {M}arkov chains: two-time-scale methods and
  applications}, volume~55.
\newblock Springer, 2006.

\end{thebibliography}

\end{document}